\newtheorem{theorem}{Theorem}[section]
\newtheorem{corollary}[theorem]{Corollary}
\newtheorem{lemma}[theorem]{Lemma}
\newtheorem{proposition}[theorem]{Proposition}
\newtheorem{conjecture}[theorem]{Conjecture}
\theoremstyle{definition}
\newtheorem{definition}[theorem]{Definition}
\newtheorem{example}[theorem]{Example}
\DeclareMathOperator{\QEC}{QEC}
\title[On  quadratic embeddability of bipartite graphs]{On  quadratic embeddability of  bipartite graphs and  theta graphs}
\begin{document}
\today

\author{Wojciech Młotkowski, Marek Skrzypczyk and Michał Wojtylak}

\begin{abstract}
We compute the quadratic embedding constant for  complete bipartite graphs with disjoint edges removed.
Moreover, we study the quadratic embedding property for theta graphs, i.e., graphs consisting of three paths with common initial points and common endpoints.
As a result, we provide an infinite family of primary graphs which are not quadratically embeddable.
\end{abstract}
\maketitle

\section*{Introduction}
\subsection{The problem}

We consider undirected graphs $G=(V,E)$ without self-loops, 
with the standard distance $d(x,y)$ defined on the set $V\times V$ (we refer the reader to the Preliminaries for formal statements of the basic definitions). 
A graph $H$ is called isometrically embeddable in $G$ if $H$ is (isomorphic with) a subgraph of $G$ and the distances between vertices in $H$ are preserved when considered in $G$. 
A special attention will lie in bipartite graphs. Besides the complete bipartite graph $K_{m,n}$ we recall another important example of a bipartite graph:  the \emph{hypercube}. Namely, let $S$ be a finite set and let $H(S)$ be a graph such that the vertices are the subsets  of $S$; further, two vertices $N_1$ and $N_2$ are adjacent if and only if the symmetric difference $N_1 \triangle N_2$ is a singleton set.
In a seminal  paper \cite{Djokovic} Djokovi\'c characterised graphs isometrically embeddedable in a hypercube by a condition that $G$ is bipartite and the set
$$
G(a,b)=\{x\in G\  |\  d(x,a)<d(x,b)\},
$$
where vertices $a$ and $b$ are adjacent, is convex for all $a,b\in G$ (i.e., if for all $x,y\in G(a,b)$, $d(x,z)+d(y,z)=d(x,y)$ implies that $z\in G(a,b)$). The work has found numerous applications in graph theory and other branches of mathematics, see e.g., \cite{Asr1998,Deza1997,Ham2011}, and chemistry, e.g., \cite{Aro2019,Klav2008,  Klav1997}.

Later on Roth and Winkler \cite{RothWinkler} showed that a bipartite graph $G=(V,E)$ is  isometrically embeddable in a hypercube if and only if it admits a \emph{quadratic embedding}, i.e.,  a map $\varphi: V \to \mathcal{H}$ satisfying
$$
\|\varphi(x) - \varphi(y)\|^2=d(x,y), \quad x,y \in V, 
$$
where $(\mathcal H,\|\cdot\|)$ is some Hilbert space  and $d(x,y)$ is the graph distance.

This observation constitutes a  connection with the general theory of quadratic embeddings by 
Schoenberg, see \cite{Sch2}. In particular, a finite set $x_1,\dots,x_n$ with an arbitrary metric $d$  is quadratically embeddable if and only if the matrix
$$
q^D: = [q^{d(x_i,x_j)}]_{ij=1}^n ,\quad \text{where } D=[d(x_i,x_j]_{ij=1}^n,\ 0^0:=1, 
$$
is positive semidefinite for $q\in[0,1]$. This question is in connection with another classical task in harmonic analysis, which is to determine for a given symmetric matrix $D$ (with zeros on the diagonal only) 
 the following set
$$
\pi(G)= \{q \in \mathbb{R}: q^D \succeq 0\},
$$
where $\succeq 0$ means positive semidefinite. The set $\pi(G)$ is nonempty since $0, 1 \in \pi(G)$ and is closed and if $n>1$ we have $\pi(D)\subseteq[-1,1]$ due to existence of a principal minor of $q^D$ of the form
$\begin{bmatrix}
1 & q^x\\
q^x & 1 
\end{bmatrix}$ $(x\neq0)$.  The concept of quadratic embedding (of a metric space in general) has been studied  considerably
along with Euclidean distance geometry \cite{Alfakih,Bal,Bapat,Winkler,Jak1,Jak2,Maehara}   
and also appeared in harmonic analysis, for example in studying the Kazhdan property $(T)$ of groups \cite{Boz2, BozDolEjsGal, BozJanSpa, Obata5, Obata6}, and related structures  and  quantum probability \cite{Boz2, BozJanSpa, Hora1, Obata5, Obata6}.

Returning to the topic of graphs, the theory of quadratic embeddings for graphs was undertaken by Obata. In a series of coauthored papers \cite{ObataBaskoro2024,Mlo2,MłotkowskiObata2024,Obata6,Obata4,Obata1,ObataPrim6v,Obata3} he constituted a general theory for determining whether a graph (not necessarily bipartite) admits a quadratic embedding. Several of these results will be cited and used later on in the current paper.
A graph $G$ is said to be \emph{of QE class}  (\emph{of non-QE class}) if it admits 
a quadratic embedding  (does not admit a quadratic embedding, respectively), see \cite{Obata3}. Moreover, a graph of non-QE class is called \emph{primary} if it contains no isometrically embedded proper subgraphs of non-QE class \cite{Obata1}.\\ 

{\bf Main problem.} Obata showed all primary non-QE graphs on 5 and 6 vertices, as well as all complete multipartite primary non-QE graphs \cite{Obata1, ObataPrim6v, Obata3}. The main problem is to find all primary non-QE graphs and whether they exist for each number of vertices greater than or equal 5.\\

We provide a positive answer to this question.
Our main tool for achieving this will be the following 
improvement of the Djokovi\'c result. Namely,  it was asked by Bo\.zejko and answered in Tanaka \cite{Tan}, to characterise the case $[-1,1]\subseteq \pi(D)$ for a graph distance matrix $D$.  We present the result summarising the research discussed above. 
\begin{theorem}\label{tan-cite}
Let $G$ be a connected  graph with at least two vertices, then the following are equivalent:
\begin{enumerate}[\rm (i)]
\item $G$ is isometrically embeddable in a hypercube,
\item $G$ is bipartite and $G(a,b)$ is convex for each adjacent vertices $a,b\in G$,
\item $\pi(G)=[-1,1]$,
\item $G$ is bipartite and has no quintuple of vertices $(v_1,\dots,v_5)$ with $\{v_1,v_2\},\{v_3,v_4\}\in E$,
$d(v_1,v_3)=d(v_2,v_4)=d(v_1,v_4)-1=d(v_2,v_3)-1$, $d(v_5,v_2)=d(v_5,v_1)+1$, $d(v_5,v_3)=d(v_5,v_4)+1$.
\end{enumerate}
Furthermore, if a quintuple as in {\rm(iv)} exists, then $G$ is of non-QE class, regardless if it is bipartite or not. 
\end{theorem}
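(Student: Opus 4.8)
The plan is to obtain the four equivalences by threading together the classical results recalled in the introduction and supplying the two arguments that glue them, keeping careful track of where bipartiteness is actually used. First, $(i)\Leftrightarrow(ii)$ is precisely Djokovi\'c's theorem \cite{Djokovic}, so I would just invoke it. For the link with $(iii)$ I would use two facts: by Roth and Winkler \cite{RothWinkler} a bipartite graph is isometrically hypercube-embeddable if and only if it is of QE class, and by the Schoenberg-type criterion recalled above being of QE class is the same as $q^{D}\succeq 0$ for all $q\in[0,1]$, i.e.\ $[0,1]\subseteq\pi(G)$. Thus the only work in $(i)\Leftrightarrow(iii)$ is to upgrade $[0,1]$ to $[-1,1]$ under $(i)$ and to recover bipartiteness from $\pi(G)=[-1,1]$.

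For $(i)\Rightarrow(iii)$ I would first compute $\pi$ of the hypercube: since $|N_1\triangle N_2|$ is a sum of contributions, one per element of $S$, the matrix $q^{D_{H(S)}}$ is the $|S|$-fold Kronecker power of $\begin{bmatrix}1 & q\\ q & 1\end{bmatrix}$ (eigenvalues $1\pm q$), hence positive semidefinite exactly for $q\in[-1,1]$, so $\pi(H(S))=[-1,1]$. An isometric embedding $G\hookrightarrow H(S)$ realizes $q^{D_G}$ as a principal submatrix of $q^{D_{H(S)}}$, so $[-1,1]\subseteq\pi(G)$, and with the a priori inclusion $\pi(G)\subseteq[-1,1]$ this gives $(iii)$. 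For $(iii)\Rightarrow(i)$: $(iii)$ forces $[0,1]\subseteq\pi(G)$, so $G$ is of QE class; and $-1\in\pi(G)$ means $(-1)^{D_G}$ is a Gram matrix, $(-1)^{d(x,y)}=\langle\xi_x,\xi_y\rangle$ with $\|\xi_x\|=1$, so $x\sim y$ forces $\xi_y=-\xi_x$ and walking around an odd closed walk would give $\xi_x=-\xi_x$; hence $G$ is bipartite, and \cite{RothWinkler} then yields $(i)$.

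Next, $(ii)\Leftrightarrow(iv)$, the combinatorial heart (the point answered in \cite{Tan}); since the bipartiteness clause is common to both statements, I would prove ``all $G(a,b)$ convex $\Leftrightarrow$ no quintuple as in $(iv)$'' by contraposition in both directions, using that in a bipartite graph $d(\cdot,a)$ and $d(\cdot,b)$ differ by exactly $1$ when $a\sim b$, so $V=G(a,b)\sqcup G(b,a)$. If some $G(a,b)$ is non-convex, choose $x,y\in G(a,b)$ and a geodesic from $x$ to $y$ leaving $G(a,b)$, let $z$ be its first vertex outside $G(a,b)$ and replace $x$ by the predecessor of $z$ on this geodesic; then $x\sim z$, $x,y\in G(a,b)$, $z\in G(b,a)$ and $z$ still lies on a geodesic from $x$ to $y$, and a direct check of the six relevant distances shows that $(a,b,x,z,y)$ is a quintuple as in $(iv)$. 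Conversely, given such a quintuple the edge $\{v_3,v_4\}$ yields a non-convex half-space: $v_2,v_5\in G(v_4,v_3)$ and $v_1\notin G(v_4,v_3)$, while $d(v_2,v_1)+d(v_1,v_5)=1+d(v_5,v_1)=d(v_2,v_5)$, so $v_1$ lies on a geodesic from $v_2$ to $v_5$.

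Finally, the last sentence: a quintuple as in $(iv)$ forces $G$ to be of non-QE class even without bipartiteness, and here I would argue directly in Hilbert space. Suppose $\varphi\colon V\to\mathcal H$ is a quadratic embedding; put $\psi_i=\varphi(v_i)$, $u=\psi_2-\psi_1$, $w=\psi_4-\psi_3$, so $\|u\|^2=\|w\|^2=1$. Expanding $\|\psi_1-\psi_4\|^2=\|\psi_2-\psi_3\|^2=\|\psi_1-\psi_3\|^2+1$ and $\|\psi_2-\psi_4\|^2=\|\psi_1-\psi_3\|^2$ gives $\langle\psi_1-\psi_3,u\rangle=\langle\psi_1-\psi_3,w\rangle=0$ and then $\|u-w\|^2=0$, i.e.\ $u=w$. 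Expanding $\|\psi_5-\psi_2\|^2=\|\psi_5-\psi_1\|^2+1$ and $\|\psi_5-\psi_3\|^2=\|\psi_5-\psi_4\|^2+1$ with $u=w$ gives $\langle\psi_5-\psi_1,u\rangle=\langle\psi_5-\psi_4,u\rangle=0$, hence $\langle\psi_4-\psi_1,u\rangle=0$; but $\psi_4-\psi_1=u+(\psi_3-\psi_1)$, so $\langle\psi_4-\psi_1,u\rangle=\|u\|^2+\langle\psi_3-\psi_1,u\rangle=1$, a contradiction. The deep inputs here are the cited theorems; the only genuinely new step is this last computation, and the main place where care is needed is the bookkeeping of the six distance identities when building a quintuple from a non-convex half-space, together with the degenerate cases where some of $a,b,x,z,y$ coincide, which are exactly the ones ruled out by taking the ``first exit'' witness above.
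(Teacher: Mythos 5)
Your proposal is correct, but it is worth noting that the paper does not actually prove Theorem \ref{tan-cite} at all: it presents it explicitly as a summary of known results, with (i)$\Leftrightarrow$(ii) attributed to Djokovi\'c, the link to quadratic embeddability to Roth--Winkler and Schoenberg, and (iv) together with the final non-QE claim to Tanaka (restated later as Theorem \ref{pro:tanaka}, where the authors merely remark that the last assertion ``easily follows from the proof'' in \cite{Tan}). What you supply is therefore a genuinely self-contained reconstruction, and all of its components check out: the Kronecker-power computation of $\pi(H(S))=[-1,1]$ and the principal-submatrix argument give (i)$\Rightarrow$(iii); the Gram-matrix argument at $q=-1$ correctly recovers bipartiteness for (iii)$\Rightarrow$(i); and your Hilbert-space computation ($u=w$, then $\langle\psi_4-\psi_1,u\rangle$ equal to both $0$ and $1$) is a clean direct proof of the final sentence. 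In the combinatorial step (ii)$\Leftrightarrow$(iv), the ``first exit'' construction does work, but the one place deserving an explicit line is the identity $d(a,x')=d(b,z')$, which you need for the quintuple condition $d(v_1,v_3)=d(v_2,v_4)$: it follows from combining $d(b,x')=d(a,x')+1$, $d(a,z')=d(b,z')+1$ (bipartiteness plus the half-space memberships) with the two triangle inequalities $|d(a,x')-d(a,z')|\le 1$ and $|d(b,x')-d(b,z')|\le 1$, and this same identity is what rules out the degenerate coincidences $x'=a$, $z'=b$. With that line added, your argument is a complete proof of a statement the paper only cites.
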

 
We will extend the result by providing a concept of another quintuple of vertices, working for non-bipartite graphs.   

Another important notion in the theory is the  \emph{QE constant} of a graph $G$ is defined by Obata in \cite{Obata3} as 
	$$
	\QEC(G)= \max \{  f^\top Df :  f^\top f =1, \textbf{1}^\top f =0 \},
	$$
where $\textbf{1}$ stands for the vector with all coefficients equal to 1.  
Apparently, a graph $G$ is of QE-class if and only if   
$\QEC(G) \le 0$ (in other words the distance matrix is \emph{conditionally negative}).  Properties of $\QEC(G)$ and its values for some types of graphs has been studied by several authors \cite{Bas, ObataBaskoro2024, ChouNan, IraSug, Jak1, Jak2, Lou, Mlo1, Mlo2, MłotkowskiObata2024, Obata1, ObataPrim6v, Obata3, Pur}.
In the current work we will provide or estimate the QE-constant for several graphs that have not been studied so far.  

\subsection{Our aproach and results} The intuition behind the research presented in the paper is the following. Recall that the complete bipartite graph $K_{m,n}$ is QE class only for $n=1$, or $m\ge 1$, or
		 $m=n=2$ and in general its $\QEC$ constant equals $\frac{2(m-1)(n-1)-2}{m+n}$,  see Obata--Zakiyyah \cite{Obata3}. 
	 Suppose now we take large $m,n$ and  remove consecutively its edges in a way that leaves the remaining graph connected. We receive in this way a family of graphs with the order given by inclusion (which, in general, is not the isometric embedding) with the maximal element $K_{m,n}$ and several minimal elements. Among these minimal elements we may find graphs of QE class, e.g., the path  $P_n$ with $QEC(P_n)=-\frac{1}{1+\cos{\pi/n}}$ (cf. \cite{Mlo1}). Our (successful) guess was that somewhere in between  lies hidden  the primary non-QE graph.  The  results of the research are as follows.
\begin{enumerate}
    \item In Section~\ref{sec:qeobg} we compute the QE constant for almost complete bipartite graph $K_{m,n}^t$, i.e. the complete bipartite graph $K_{m,n}$, with $t$ disjoint edges removed
 (Theorems \ref{th:Ktmn}, \ref{th:K22n}).

\item In Sections \ref{sec:tgwt}–\ref{se:proofs} we study the QE property for theta graphs $\Theta(\alpha,\beta,\gamma)$, see Definition \ref{Deftheta}. An important step was characterization of those theta graphs
 which admit Tanaka quintuple (or modified Tanaka quintuple), see Theorem \ref{th:thetawith5t}
 (and Theorem \ref{thetawithmodifiedtanaka}). The main results in this part are Theorems \ref{thm:thetaqe} and \ref{thm:thetanonqe}, which
 provide a sufficient and a necessary condition for $\Theta(\alpha,\beta,\gamma)$ to be of QE class.

\item  Finally, we prove that the family of primary non-QE graphs is infinite, namely,
 each theta graph which is not of QE type is primary, Theorem \ref{thm:primarynonqe} and Corollary~\ref{cor:primaryinfinite}.

\end{enumerate}
We conclude the Introduction with  tables of all subgraphs of  $K_{3,3}$, marking the instances where our results apply, see Table~\ref{tab:33}.

\begin{center}
\begin{table}[h]
\captionsetup{justification=centering}
\small
    \begin{tabular}{|c|c|c|c|c|c|}
    \hline
    
        No. & $G$ & $|E|$ & Type of graph & Obtained from & $\QEC (G)$ \\
        \hline
        73 & \begin{tikzpicture}[x=0.85cm,y=0.85cm] 
  \tikzset{     
    e4c node/.style={circle,fill,minimum size=0.10cm,inner sep=-1pt}, 
    e4c edge/.style={sloped,above,font=\footnotesize}
  }
  \node[e4c node] (1) at (0.00, 1.16) {}; 
  \node[e4c node] (2) at (0.50, 1.16) {}; 
  \node[e4c node] (3) at (1.00, 1.16) {}; 
  \node[e4c node] (4) at (0.00, 0.50) {}; 
  \node[e4c node] (5) at (0.50, 0.50) {}; 
\node[e4c node] (6) at (1.00, 0.50) {}; 

  \path[draw,thick]
  (1) edge[e4c edge]  (4)
  (1) edge[e4c edge]  (5)
  (2) edge[e4c edge]  (4)
  (2) edge[e4c edge]  (5)
  (3) edge[e4c edge]  (4)
  (3) edge[e4c edge]  (5)
  (1) edge[e4c edge]  (6)
  (2) edge[e4c edge]  (6)
  (3) edge[e4c edge]  (6)
  ;
\end{tikzpicture} & 9 & complete bipartite $K_{3,3}$ & & $1$ \\
        \hline
        55 & \begin{tikzpicture}[x=0.85cm,y=0.85cm] 
  \tikzset{     
    e4c node/.style={circle,fill,minimum size=0.10cm,inner sep=-1pt}, 
    e4c edge/.style={sloped,above,font=\footnotesize}
  }
  \node[e4c node] (1) at (0.00, 1.16) {}; 
  \node[e4c node] (2) at (0.50, 1.16) {}; 
  \node[e4c node] (3) at (1.00, 1.16) {}; 
  \node[e4c node] (4) at (0.00, 0.50) {}; 
  \node[e4c node] (5) at (0.50, 0.50) {}; 
\node[e4c node] (6) at (1.00, 0.50) {}; 

  \path[draw,thick]
  (1) edge[e4c edge]  (4)
  (1) edge[e4c edge]  (5)
  (2) edge[e4c edge]  (4)
  (2) edge[e4c edge]  (5)
  (3) edge[e4c edge]  (4)
  (3) edge[e4c edge]  (5)
  (1) edge[e4c edge]  (6)
  (2) edge[e4c edge]  (6)
  ;
\end{tikzpicture} & 8 & {$K_{3,3}^1$, cf. Cor. \ref{cor:ktmm} }& 73 & $\frac{-3+\sqrt{17}}{2}$ \\
        \hline
        36 & \begin{tikzpicture}[x=0.85cm,y=0.85cm] 
  \tikzset{     
    e4c node/.style={circle,fill,minimum size=0.10cm,inner sep=-1pt}, 
    e4c edge/.style={sloped,above,font=\footnotesize}
  }
  \node[e4c node] (1) at (0.00, 1.16) {}; 
  \node[e4c node] (2) at (0.50, 1.16) {}; 
  \node[e4c node] (3) at (1.00, 1.16) {}; 
  \node[e4c node] (4) at (0.00, 0.50) {}; 
  \node[e4c node] (5) at (0.50, 0.50) {}; 
\node[e4c node] (6) at (1.00, 0.50) {}; 

  \path[draw,thick]
  (1) edge[e4c edge]  (4)
  (1) edge[e4c edge]  (5)
  (2) edge[e4c edge]  (4)
  (2) edge[e4c edge]  (5)
  (3) edge[e4c edge]  (4)
  (1) edge[e4c edge]  (6)
  (2) edge[e4c edge]  (6)
  ;
\end{tikzpicture} & 7 & & 55 & $\thickapprox 0.408$ \\
        \hline
        35 & \begin{tikzpicture}[x=0.85cm,y=0.85cm]
  \tikzset{     
    e4c node/.style={circle,fill,minimum size=0.10cm,inner sep=-1pt}, 
    e4c edge/.style={sloped,above,font=\footnotesize}
  }
  \node[e4c node] (1) at (0.00, 1.16) {}; 
  \node[e4c node] (2) at (0.50, 1.16) {}; 
  \node[e4c node] (3) at (1.00, 1.16) {}; 
  \node[e4c node] (4) at (0.00, 0.50) {}; 
  \node[e4c node] (5) at (0.50, 0.50) {}; 
\node[e4c node] (6) at (1.00, 0.50) {}; 

  \path[draw,thick]
  (1) edge[e4c edge]  (4)
  (1) edge[e4c edge]  (5)
  (2) edge[e4c edge]  (4)
  (3) edge[e4c edge]  (4)
  (3) edge[e4c edge]  (5)
  (1) edge[e4c edge]  (6)
  (2) edge[e4c edge]  (6)
  ;
\end{tikzpicture} & 7 &  \begin{tabular}{c}
      $K_{3,3}^2$,  cf. Cor.\ref{cor:ktmm},\\
     also $\Theta(1,3,3)$, Prop.~3.15 in \cite{ObataPrim6v}. 
\end{tabular} &
 55 & 0 \\
        \hline
        19 & \begin{tikzpicture}[x=0.85cm,y=0.85cm] 
  \tikzset{     
    e4c node/.style={circle,fill,minimum size=0.10cm,inner sep=-1pt}, 
    e4c edge/.style={sloped,above,font=\footnotesize}
  }
  \node[e4c node] (1) at (0.00, 1.16) {}; 
  \node[e4c node] (2) at (0.50, 1.16) {}; 
  \node[e4c node] (3) at (1.00, 1.16) {}; 
  \node[e4c node] (4) at (0.00, 0.50) {}; 
  \node[e4c node] (5) at (0.50, 0.50) {}; 
\node[e4c node] (6) at (1.00, 0.50) {}; 

  \path[draw,thick]
  (1) edge[e4c edge]  (5)
  (2) edge[e4c edge]  (4)
  (3) edge[e4c edge]  (4)
  (3) edge[e4c edge]  (5)
  (1) edge[e4c edge]  (6)
  (2) edge[e4c edge]  (6)
  ;
\end{tikzpicture} & 6 &  
\begin{tabular}{c}crown graph, \\
also: cycle $C_6$, cf., \cite{Obata3}
\end{tabular}& 35 & 0 \\
        \hline
        
        18 & \begin{tikzpicture}[x=0.85cm,y=0.85cm]
  \tikzset{     
    e4c node/.style={circle,fill,minimum size=0.10cm,inner sep=-1pt}, 
    e4c edge/.style={sloped,above,font=\footnotesize}
  }
  \node[e4c node] (1) at (0.00, 1.16) {}; 
  \node[e4c node] (2) at (0.50, 1.16) {}; 
  \node[e4c node] (3) at (1.00, 1.16) {}; 
  \node[e4c node] (4) at (0.00, 0.50) {}; 
  \node[e4c node] (5) at (0.50, 0.50) {}; 
\node[e4c node] (6) at (1.00, 0.50) {}; 

  \path[draw,thick]
  (1) edge[e4c edge]  (4)
  (1) edge[e4c edge]  (5)
  (2) edge[e4c edge]  (4)
  (3) edge[e4c edge]  (4)
  (1) edge[e4c edge]  (6)
  (2) edge[e4c edge]  (6)
  ;
\end{tikzpicture} & 6 &  &
 36 or 35& 0 \\
        \hline
        
15 & \begin{tikzpicture}[x=0.85cm,y=0.85cm] 
  \tikzset{     
    e4c node/.style={circle,fill,minimum size=0.10cm,inner sep=-1pt}, 
    e4c edge/.style={sloped,above,font=\footnotesize}
  }
  \node[e4c node] (1) at (0.00, 1.16) {}; 
  \node[e4c node] (2) at (0.50, 1.16) {}; 
  \node[e4c node] (3) at (1.00, 1.16) {}; 
  \node[e4c node] (4) at (0.00, 0.50) {}; 
  \node[e4c node] (5) at (0.50, 0.50) {}; 
\node[e4c node] (6) at (1.00, 0.50) {}; 

  \path[draw,thick]
  (1) edge[e4c edge]  (4)
  (1) edge[e4c edge]  (5)
  (2) edge[e4c edge]  (4)
  (3) edge[e4c edge]  (5)
  (1) edge[e4c edge]  (6)
  (2) edge[e4c edge]  (6)
  ;
\end{tikzpicture} & 6 & & 
   36 or 35& 0 \\
        \hline    
        
6 & \begin{tikzpicture}[x=0.85cm,y=0.85cm] 
  \tikzset{     
    e4c node/.style={circle,fill,minimum size=0.10cm,inner sep=-1pt}, 
    e4c edge/.style={sloped,above,font=\footnotesize}
  }
  \node[e4c node] (1) at (0.00, 1.16) {}; 
  \node[e4c node] (2) at (0.50, 1.16) {}; 
  \node[e4c node] (3) at (1.00, 1.16) {}; 
  \node[e4c node] (4) at (0.00, 0.50) {}; 
  \node[e4c node] (5) at (0.50, 0.50) {}; 
\node[e4c node] (6) at (1.00, 0.50) {}; 

  \path[draw,thick]
  (1) edge[e4c edge]  (5)
  (2) edge[e4c edge]  (4)
  (3) edge[e4c edge]  (5)
  (1) edge[e4c edge]  (6)
  (2) edge[e4c edge]  (6)
  ;
\end{tikzpicture} & 5 &  
path $P_6$, cf. Thm.1.1 in \cite{Mlo1} &   19, 18 or 15& $2\sqrt{3}-4$ \\
        \hline          
        
    5 & \begin{tikzpicture}[x=0.85cm,y=0.85cm] 
  \tikzset{     
    e4c node/.style={circle,fill,minimum size=0.10cm,inner sep=-1pt}, 
    e4c edge/.style={sloped,above,font=\footnotesize}
  }
  \node[e4c node] (1) at (0.00, 1.16) {}; 
  \node[e4c node] (2) at (0.50, 1.16) {}; 
  \node[e4c node] (3) at (1.00, 1.16) {}; 
  \node[e4c node] (4) at (0.00, 0.50) {}; 
  \node[e4c node] (5) at (0.50, 0.50) {}; 
\node[e4c node] (6) at (1.00, 0.50) {}; 

  \path[draw,thick]
  (1) edge[e4c edge]  (4)
  (1) edge[e4c edge]  (5)
  (3) edge[e4c edge]  (4)
  (1) edge[e4c edge]  (6)
  (2) edge[e4c edge]  (6)
  ;
\end{tikzpicture} & 5 & & 
   18 or 15& $\thickapprox -0.4648$ \\
        \hline    
        
        3 & \begin{tikzpicture}[x=0.85cm,y=0.85cm] 
  \tikzset{     
    e4c node/.style={circle,fill,minimum size=0.10cm,inner sep=-1pt}, 
    e4c edge/.style={sloped,above,font=\footnotesize}
  }
  \node[e4c node] (1) at (0.00, 1.16) {}; 
  \node[e4c node] (2) at (0.50, 1.16) {}; 
  \node[e4c node] (3) at (1.00, 1.16) {}; 
  \node[e4c node] (4) at (0.00, 0.50) {}; 
  \node[e4c node] (5) at (0.50, 0.50) {}; 
\node[e4c node] (6) at (1.00, 0.50) {}; 

  \path[draw,thick]
  (1) edge[e4c edge]  (4)
  (1) edge[e4c edge]  (5)
  (2) edge[e4c edge]  (4)
  (3) edge[e4c edge]  (4)
  (1) edge[e4c edge]  (6)
  ;
\end{tikzpicture} & 5 & & 
   18 or 15& $\thickapprox -0.4385$ \\
        \hline   
    \end{tabular}
    \caption{Subgraphs of $K_{3,3}$  obtained by consecutive removing of edges. }
    \label{tab:33}
    \end{table}
\end{center}

\section{Preliminaries}
A \textit{graph} $G=(V,E)$ is a pair of non-empty set $V$ of vertices and a set $E$, where an edge is a set of two distinct vertices in $V$. A \textit{walk} is a finite  sequence of edges which joins a sequence of vertices. A graph is called \textit{connected} if there is a finite walk for each pair of vertices. 
In such case by $d(x,y)=d_G(x,y)$ we denote the length of a shortest walk connecting $x$ and $y$, such a walk is called \textit{geodesic}. Note that  $d(x,y)$ is a metric on $V$, called as the \textit{graph distance}. The \textit{distance matrix} is defined by
$$
D=d(x,y)_{x,y \in V}.
$$
We will identify $D$ with an $n\times n$ real matrix by fixing an ordering of vertices. By $\mathbf 1$ we understand the vector in $\mathbb R^n$ consisting of ones, the size $n$ will be clear from the context.  
A \textit{bipartite graph} is a graph whose vertex set can be partitioned into two subsets $V_1$ and $V_2$, such that no two vertices within the same set are adjacent.
The \textit{complete bipartite graph} $K_{m,n}$ is the maximal bipartite graph.

A \textit{quadratic embedding} of a finite connected graph $G=(V,E)$ in a Hilbert space $\mathcal{H}$ is a map $\varphi: V \to \mathcal{H}$ satisfying
$$||\varphi(x) - \varphi(y)||^2=d(x,y), \quad x,y \in V, $$
where $||\cdot||$ is the norm of the Hilbert space $\mathcal{H}$, and $d(x,y)$ is the graph distance.
A graph $G$ is said to be \textit{of QE class}  (\textit{of non-QE class}) if it admits 
a quadratic embedding  (does not admit a quadratic embedding, respectively).
The definition of $\QEC(G)$ was given in the Introduction.

A graph $H=(V',E')$ is called a \textit{subgraph} of $G=(V,E)$ if $V' \subset V$ and $E' \subset E$. Let both graphs are connected and hence admit graph distances of their own. We say that $H$ is \textit{isometrically embedded} in $G$ if 
	$$d_H (x,y)= d_G (x,y) \quad x,y \in V'.$$
		Let $G=(V,E)$ and $H=(V',E')$ be graphs and assume that $H$ is isometrically embedded in $G$. Note the following simple fact: if $H$ is of non-QE class, so $G$ is of non-QE class as well, furthermore $\QEC(H)\le \QEC(G)$.
		Hence, non-QE graphs can be divided into two different types. A graph of non-QE class is called \textit{primary} if it contains no isometrically embedded proper subgraphs of non-QE class \cite{Obata1}. Otherwise, it is called \textit{non-primary}.

Let $G_1=(V_1, E_1)$ and $G_2=(V_2,E_2)$ have a distinguished vertices $v_1\in V_1$ and $v_2\in V_2$, respectively. A \textit{star product} denoted by $G_1 \star G_2$ is a graph obtained by gluing vertices $v_1$ and $v_2$. Recall that if  both $G_1$ and $G_2$ are of QE class, then $G_1\star G_2$ is also of QE class, cf. \cite{Obata3}.

We end the section by remarking that the $\QEC$ constant is not (weakly) monotone with respect to removing disjoint edges from a bipartite graph.  
\begin{example}

Consider Table \ref{tab1}. (The graph on the right hand side, denoted  later on in this paper as $\Theta(2,2,4)$, is particularly interesting, because it is of primary non-QE class.)

\begin{center}
\begin{table}[h]
\captionsetup{justification=centering}
\small
    \begin{tabular}{|c||c|c|c|}
    \hline  & 
\begin{tikzpicture}[x=4cm,y=4cm] 
  \tikzset{     
    e4c node/.style={circle,draw,minimum size=0.45cm,inner sep=0}, 
    e4c edge/.style={sloped,above,font=\footnotesize}
  }
  \node[e4c node] (u1) at (0.00, 1.00) {$u_1$}; 
  \node[e4c node] (u2) at (0.30, 1.00) {$u_2$}; 
  \node[e4c node] (u3) at (0.60, 1.00) {$u_3$}; 
  \node[e4c node] (u4) at (0.90, 1.00) {$u_4$}; 
  \node[e4c node] (u5) at (0.15, 0.50) {$u_5$}; 
  \node[e4c node] (u6) at (0.45, 0.50) {$u_6$}; 
  \node[e4c node] (u7) at (0.75, 0.50) {$u_7$};

  \path[draw,thick]
  (u1) edge[e4c edge]  (u5)
  (u1) edge[e4c edge]  (u6)
  (u1) edge[e4c edge]  (u7)
  (u2) edge[e4c edge]  (u6)
  (u2) edge[e4c edge]  (u7)
  (u3) edge[e4c edge]  (u5)
  (u3) edge[e4c edge]  (u7)
  (u4) edge[e4c edge]  (u5)
  (u4) edge[e4c edge]  (u6)
  ;
\end{tikzpicture}
& 
\begin{tikzpicture}[x=4cm,y=4cm] 
  \tikzset{     
    e4c node/.style={circle,draw,minimum size=0.45cm,inner sep=0}, 
    e4c edge/.style={sloped,above,font=\footnotesize}
  }
  \node[e4c node] (u1) at (0.00, 1.00) {$v_1$}; 
  \node[e4c node] (u2) at (0.30, 1.00) {$v_2$}; 
  \node[e4c node] (u3) at (0.60, 1.00) {$v_3$}; 
  \node[e4c node] (u4) at (0.90, 1.00) {$v_4$}; 
  \node[e4c node] (u5) at (0.15, 0.50) {$v_5$}; 
  \node[e4c node] (u6) at (0.45, 0.50) {$v_6$}; 
  \node[e4c node] (u7) at (0.75, 0.50) {$v_7$};

  \path[draw,thick]
  (u1) edge[e4c edge]  (u5)
  (u1) edge[e4c edge]  (u6)
  (u1) edge[e4c edge]  (u7)
  (u2) edge[e4c edge]  (u5)
  (u2) edge[e4c edge]  (u7)
  (u3) edge[e4c edge]  (u5)
  (u3) edge[e4c edge]  (u6)
  (u4) edge[e4c edge]  (u5)
  (u4) edge[e4c edge]  (u6)
  ;
\end{tikzpicture}

& 
\begin{tikzpicture}[x=4cm,y=4cm] 
  \tikzset{     
    e4c node/.style={circle,draw,minimum size=0.35cm,inner sep=0}, 
    e4c edge/.style={sloped,above,font=\footnotesize}
  }
  \node[e4c node] (u1) at (0.00, 1.00) {$w_1$}; 
  \node[e4c node] (u2) at (0.30, 1.00) {$w_2$}; 
  \node[e4c node] (u3) at (0.60, 1.00) {$w_3$}; 
  \node[e4c node] (u4) at (0.90, 1.00) {$w_4$}; 
  \node[e4c node] (u5) at (0.15, 0.50) {$w_5$}; 
  \node[e4c node] (u6) at (0.45, 0.50) {$w_6$}; 
  \node[e4c node] (u7) at (0.75, 0.50) {$w_7$};

  \path[draw,thick]
  (u1) edge[e4c edge]  (u6)
  (u1) edge[e4c edge]  (u7)
  (u2) edge[e4c edge]  (u5)
  (u2) edge[e4c edge]  (u7)
  (u3) edge[e4c edge]  (u5)
  (u3) edge[e4c edge]  (u6)
  (u4) edge[e4c edge]  (u5)
  (u4) edge[e4c edge]  (u6)
  ;
\end{tikzpicture}

\\\hline
    $\QEC$ & 0 & $\approx 0.5149$ & $\approx 0.5529$ \\ \hline
    \end{tabular}
     \caption{The graph on the right can be obtained by removing the edge $u_1 u_5$ of the graph on the left or the edge $v_1 v_5$ of the graph in the middle  }
    \label{tab1}
    \end{table}
    \end{center}
\end{example}


\section{$QE$ constants of almost complete bipartite graphs} \label{sec:qeobg}


For $0\le t\le m\le n$, $m\ge1$, define
$K_{m,n}^{t}:=(V,E)$, where $V:=\{u_1,\ldots,u_m,v_1,\ldots,v_n\}$,
\[
E:=\big\{\{u_i,v_j\}:1\le i\le m,1\le j\le n, \hbox{ and if } i=j\hbox{ then } i>t\big\}.
\]
The graphs $K_{m,n}^t$ are called \textit{almost complete bipartite graphs}.

In particular, for $t=0$ we denote $K_{m,n}$ and this graph is the \textit{complete bipartite graph} with parameters $m,n$
and $K_{m,m}^{m}$ is called the \textit{crown graph} of order $m$.
The graphs $K_{1,n}^{1}$, $K_{2,2}^{2}$ are not connected, $K_{1,1}=P_2$, $K_{1,2}=P_3$, $K_{2,2}=C_{4}$, $K_{2,2}^{1}=P_{4}$, $K_{2,3}^{2}=P_5$, $K_{3,3}^{3}=C_6$ and $K_{4,4}^{4}$ is the skeleton of the cube.
Observe that the graphs $K_{m,m}$, $m\ge1$, and $K_{m,m}^{m}$, $m\ge3$, are \textit{transmission regular},
i.e. the vectors $\mathbf 1$ and $D\mathbf 1$ are linearly dependent.

\begin{figure}[h]\label{figure:e4c}
\centering
    \begin{tikzpicture}[x=3cm,y=3cm] 
  \tikzset{
    e4c node/.style={circle,draw,minimum size=0.3cm,inner sep=0},
    e4c edge/.style={sloped,above,font=\footnotesize}
  }
  \node[e4c node] (1) at (0.00, 1.60) {};
  \node[e4c node] (2) at (0.00, 1.20) {};
  \node[e4c node] (3) at (0.00, 0.80) {};
  \node[e4c node] (4) at (0.00, 0.40) {};
  \node[e4c node] (5) at (0.00, 0.00) {};
  \node[e4c node] (10) at (1.00, 0.00) {};
  \node[e4c node] (9) at (1.00, 0.40) {};
  \node[e4c node] (8) at (1.00, 0.80) {};
  \node[e4c node] (7) at (1.00, 1.20) {};
  \node[e4c node] (6) at (1.00, 1.60) {};

  \path[draw,thick]
  (2) edge[e4c edge]  (8)
  (2) edge[e4c edge]  (6)
  (1) edge[e4c edge]  (10)
  (1) edge[e4c edge]  (9)
  (1) edge[e4c edge]  (8)
  (1) edge[e4c edge]  (7)
  (2) edge[e4c edge]  (9)
  (2) edge[e4c edge]  (10)
  (3) edge[e4c edge]  (6)
  (3) edge[e4c edge]  (7)
  (3) edge[e4c edge]  (9)
  (3) edge[e4c edge]  (10)
  (4) edge[e4c edge]  (6)
  (4) edge[e4c edge]  (7)
  (4) edge[e4c edge]  (8)
  (4) edge[e4c edge]  (10)
  (5) edge[e4c edge]  (6)
  (5) edge[e4c edge]  (7)
  (5) edge[e4c edge]  (8)
  (5) edge[e4c edge]  (9)
  ;
\end{tikzpicture}
\caption{The crown graph $K_{5,5}^{5}$}
\end{figure}
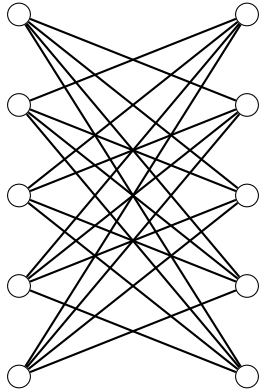

It is was proven in \cite{Obata3} that
\begin{equation}\label{eq:completebi}
\QEC(K_{m,n})=\frac{2mn-2m-2n}{mn}.
\end{equation}
We are going to study the QE constant of $K_{m,n}^{t}$ for $t\ge1$.
Note that in $K_{2,n}^{2}$, $n\ge3$, we have $d(v_1,v_2)=4$,
while $d(v_i,v_j)=2$ for $1\le i<j\le n$, in all the other connected graphs $K_{m,n}^{t}$.
Therefore we have to study two subclasses separately.
Also the case $(t,m,n)=(3,3,3)$ has to be distinguished, for which we know
that $\QEC(K_{3,3}^{3})=\QEC(C_6)=0$, see~\cite{Obata3}.
Accordingly, we are going to prove two theorems.

\begin{theorem}\label{th:Ktmn}
Assume that $1\le t\le m\le n$, $(t,m,n)\ne(3,3,3)$ and that if $t=1$ or $t=2$ then $t<m$. Then
\begin{equation}
\QEC(K_{m,n}^{t})=2\lambda_0-2,
\end{equation}
where $\lambda_0$ is the maximal root of the polynomial
\begin{equation}
P(\lambda):=(m+n)\lambda^3+(2t-mn)\lambda^2+(2t-m-n)\lambda+(m-t)(n-t).
\end{equation}
\end{theorem}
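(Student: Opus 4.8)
The plan is to exploit the large automorphism group of $K_{m,n}^{t}$ to cut the computation of $\QEC$ down to an eigenvalue problem of size at most three, and then to recognise the relevant characteristic polynomial as $P$. Under the hypotheses the graph is connected and is not one of the exceptional $K_{2,n}^{2}$, so its distances are $d(u_i,u_j)=d(v_i,v_j)=2$ for $i\neq j$, $d(u_i,v_j)=1$ unless $i=j\le t$, and $d(u_i,v_i)=3$ for $i\le t$. I would split $V$ into the four classes $V_1=\{u_i:i\le t\}$, $V_2=\{u_i:i>t\}$, $V_3=\{v_i:i\le t\}$, $V_4=\{v_i:i>t\}$ (with $V_2$, or both $V_2$ and $V_4$, possibly empty); this is an equitable partition for $D$, and the decisive step is to decompose $\mathbb{R}^{V}$ into pairwise orthogonal $D$-invariant subspaces adapted to it. Let $W_0$ be the span of the four class indicator vectors, so $\mathbf 1\in W_0$; here $e_x$ denotes the standard basis vector at the vertex $x$. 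The orthogonal complement of $W_0$ splits into pieces on which $D$ acts as a scalar: vectors supported on $V_2$ (resp.\ on $V_4$) with zero coordinate sum, where $Df=-2f$; vectors $\sum_{i\le t}g_i(e_{u_i}+e_{v_i})$ with $\sum_i g_i=0$, where $Df=0$; and vectors $\sum_{i\le t}g_i(e_{u_i}-e_{v_i})$ with $\sum_i g_i=0$, where $Df=-4f$. A dimension count shows these, together with $W_0$, span $\mathbb{R}^{V}$.

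Since $W_0$ and the remaining pieces are $D$-invariant, mutually orthogonal, and contain (respectively are orthogonal to) $\mathbf 1$, resolving an arbitrary $f\perp\mathbf 1$ along this decomposition yields
\[
\QEC(K_{m,n}^{t})=\max\bigl\{\lambda_{\max}\left(D|_{W_0\cap\mathbf 1^{\perp}}\right),\,-2,\,0,\,-4\bigr\},
\]
the entries $0$ and $-4$ appearing only when $t\ge 2$. It therefore remains to compute $\lambda_{\max}(D|_{W_0\cap\mathbf 1^{\perp}})$ and to show it dominates the scalars.

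For the symmetric block I would write $f=ae_A+be_B+ce_C+de_D$, where the letters are the common values of $f$ on $V_1,V_2,V_3,V_4$; then $f\perp\mathbf 1$ means $ta+(m-t)b+tc+(n-t)d=0$, and $f$ is an eigenvector for the eigenvalue $\mu$ iff $Df=\mu f+e\,\mathbf 1$ for some scalar $e$. Writing out the four equations coming from the four classes (using the constant class-to-class row sums of $D$), taking the combinations ``class $1$ $\pm$ class $3$'' and ``class $2$ $\pm$ class $4$'', using the constraint to eliminate $e$, and finally substituting $\mu=2\lambda-2$, the system collapses, in the variables $p=a+c$, $q=b+d$, $r=b-d$, $s=a-c$, to
\[
(\lambda-1)p=\lambda q,\qquad \lambda r=(1+\lambda)s,
\]
\[
tp+\tfrac{m+n-2t}{2}q+\tfrac{m-n}{2}r=0,\qquad \tfrac{m-n}{2}q+\tfrac{m+n-2t}{2}r+(t-2-2\lambda)s=0.
\]
The determinant of this $4\times4$ homogeneous system in $(p,q,r,s)$ equals $P(\lambda)$; checking this identity (it uses $\bigl(\tfrac{m+n-2t}{2}\bigr)^{2}-\bigl(\tfrac{m-n}{2}\bigr)^{2}=(m-t)(n-t)$) is the main computation. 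Hence, when all four classes are non-empty, $D|_{W_0\cap\mathbf 1^{\perp}}$ is a symmetric $3\times3$ operator whose eigenvalues are exactly $2\lambda_i-2$ for the three (hence real) roots $\lambda_i$ of $P$; when $V_2$, or both $V_2$ and $V_4$, are empty, the same elimination with fewer variables produces $P$ divided by $\lambda$, resp.\ by $\lambda^{2}$, and in every case $\lambda_{\max}(D|_{W_0\cap\mathbf 1^{\perp}})=2\lambda_0-2$ with $\lambda_0$ the largest root of $P$.

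Finally I would show that $2\lambda_0-2\ge 0$ when $t\ge 2$ and $2\lambda_0-2\ge -2$ when $t=1$, so that this value wins the maximum above. For $t\ge 2$ one computes $P(1)=t\,(t+4-m-n)$, and the hypotheses force $m+n\ge t+4$, so $P(1)\le 0$; since $P(\lambda)\to+\infty$, the largest root obeys $\lambda_0\ge 1$ (the extraneous roots $0$ in the degenerate cases lie strictly below it). For $t=1$ all four classes are non-empty and $P$ is a genuine cubic with three real roots whose product $-(m-1)(n-1)/(m+n)$ is negative and whose sum $(mn-2)/(m+n)$ is positive, so $P$ has a positive root and $\lambda_0>0$. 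Together with the previous paragraph this gives $\QEC(K_{m,n}^{t})=2\lambda_0-2$. The hard part is the determinant identity in the third paragraph---a lengthy but elementary polynomial expansion---and, to a lesser extent, the uniform bookkeeping for the degenerate configurations with $V_2$ or $V_4$ empty; the substitution $\mu=2\lambda-2$, which is what turns the characteristic polynomial into the integer-coefficient $P$, is the device that makes the final identification clean.
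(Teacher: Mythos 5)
Your proof is correct, but it takes a genuinely different route from the paper's. The paper computes $f^\top Df$ explicitly, reduces it to $-2+2\bigl((\sum_i x_i)^2+2\sum_{i\le t}x_iy_i\bigr)$, and then runs the method of Lagrange multipliers: it solves the stationarity system, treats the value $\lambda=1$ via a separate claim about when $1\in\mathcal S_\lambda(D)$, handles the degenerate configurations $t=m=n$ and $2\le t<n$ as distinct cases, and finally discusses the sign of $P(1)=t(t+4-m-n)$ to locate $\lambda_{\max}$. You instead exploit the equitable partition $V_1,V_2,V_3,V_4$ to split $\mathbb R^V$ into orthogonal $D$-invariant pieces on which $D$ acts by the scalars $-2$, $0$, $-4$ (I checked these, and the dimension count $4+(m-t-1)+(n-t-1)+2(t-1)=m+n$), reduce the remaining block to a $4\times4$ homogeneous system whose determinant is exactly $P(\lambda)$ after the substitution $\mu=2\lambda-2$ (this identity is correct; I verified the expansion, including the use of $A^2-B^2=(m-t)(n-t)$), and then compare $2\lambda_0-2$ against the scalar eigenvalues using $P(1)\le 0$ for $t\ge2$ and the sign of the sum and product of the roots for $t=1$. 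What your approach buys is the full spectrum of $D$ and a clean bypass of the paper's delicate bookkeeping around $\lambda=1$, $\mu=0$ and $X=0$ (since all eigenvalues are produced at once, the exceptional stationary points never need separate treatment); what it costs is the determinant computation and the uniform handling of the cases $V_2=\emptyset$ or $V_2=V_4=\emptyset$, where the extraneous factors $\lambda$ or $\lambda^2$ of $P$ must be (and are, since $\lambda_0\ge1$ there) seen to lie below the true largest root. Both arguments ultimately hinge on the same cubic $P$ and the same quantity $P(1)$, and your treatment of the excluded triples $(3,3,3)$ and $t\in\{1,2\}$ with $t=m$ matches where the paper needs them.
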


\begin{theorem}\label{th:K22n}
For $n\ge3$ we have
\begin{equation}\label{eq:thK22n}
\QEC(K_{2,n}^{2})=\frac{n-8+\sqrt{5n^2-24 n+32}}{n+2}.
\end{equation}
\end{theorem}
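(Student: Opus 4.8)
The plan is to exploit the symmetry of $G:=K_{2,n}^{2}$ to reduce the optimisation $\QEC(G)=\max\{f^{\top}Df:f^{\top}f=1,\ \mathbf 1^{\top}f=0\}$ to a couple of two-dimensional problems. Write $V=\{u_1,u_2,v_1,\dots,v_n\}$; the only missing edges are $\{u_1,v_1\}$ and $\{u_2,v_2\}$, and for $n\ge3$ a direct inspection of geodesics gives $d(u_1,u_2)=2$, $d(u_1,v_1)=d(u_2,v_2)=3$, $d(v_1,v_2)=4$, while $d(u_i,v_j)=1$ for the remaining incidences and $d(v_i,v_j)=2$ for the remaining pairs $i<j$. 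The automorphism group of $G$ contains $S_{n-2}$ (permuting $v_3,\dots,v_n$) together with the involution $\sigma$ interchanging $u_1\leftrightarrow u_2$ and $v_1\leftrightarrow v_2$, and $D$ commutes with this group; hence $\mathbf 1^{\perp}$ splits into $D$-invariant isotypic subspaces on which $\QEC$ can be computed separately.

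Concretely, I would take the orthogonal decomposition $\mathbf 1^{\perp}=W_1\oplus W_2\oplus W_3$, where $W_1$ consists of the vectors constant on each orbit $\{u_1,u_2\}$, $\{v_1,v_2\}$, $\{v_3,\dots,v_n\}$ and summing to zero ($\dim W_1=2$); $W_2$ is the $\sigma$-antisymmetric space of vectors supported on $\{u_1,u_2,v_1,v_2\}$ with $f(u_1)=-f(u_2)$ and $f(v_1)=-f(v_2)$ ($\dim W_2=2$, automatically in $\mathbf 1^{\perp}$); and $W_3$ is the space of vectors supported on $\{v_3,\dots,v_n\}$ summing to zero ($\dim W_3=n-3$). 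A dimension count gives $2+2+(n-3)=n+1=\dim\mathbf 1^{\perp}$, and using $DW_1\subseteq W_1+\mathbb R\mathbf 1$, $DW_2\subseteq W_2$, $DW_3\subseteq W_3$ (equivariance of $D$) together with $\mathbf 1\perp W_2,W_3$, all cross terms $f^{\top}Dg$ between distinct summands vanish, so $\QEC(G)$ equals the largest of the three Rayleigh maxima.

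The pieces $W_2$ and $W_3$ are disposed of quickly. On $W_3$ all mutual distances equal $2$, hence $f^{\top}Df=2\bigl((\textstyle\sum_j f(v_j))^{2}-\sum_j f(v_j)^{2}\bigr)=-2f^{\top}f$, giving the value $-2$. On $W_2$, writing $f$ via $(a,b)=(f(u_1),f(v_1))$ one computes $f^{\top}Df=-4a^{2}+8ab-8b^{2}$ and $f^{\top}f=2a^{2}+2b^{2}$, so the maximum is the larger eigenvalue $\sqrt5-3$ of $\left(\begin{smallmatrix}-2&2\\ 2&-4\end{smallmatrix}\right)$. For $W_1$ I would parametrise $f=\alpha\,\mathbf 1_{\{u_1,u_2\}}+\beta\,\mathbf 1_{\{v_1,v_2\}}+\gamma\,\mathbf 1_{\{v_3,\dots,v_n\}}$ subject to $2\alpha+2\beta+(n-2)\gamma=0$; the vectors $x_1=(1,-1,0)$ and $x_2=(n-2,n-2,-4)$ span this constraint plane and are orthogonal with respect to the quadratic form $f^{\top}f=2\alpha^{2}+2\beta^{2}+(n-2)\gamma^{2}$, which turns the problem on $W_1$ into a genuine $2\times2$ generalised eigenvalue problem. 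Evaluating $x_i^{\top}Dx_j$ and $x_i^{\top}x_i$ from the distance list, the characteristic equation should reduce, after cancelling a common factor $16(n-2)$, to
\[
(n+2)\lambda^{2}-2(n-8)\lambda-4(n-4)=0 ,
\]
with roots $\lambda_{\pm}=\dfrac{n-8\pm\sqrt{(n-8)^{2}+4(n+2)(n-4)}}{n+2}=\dfrac{n-8\pm\sqrt{5n^{2}-24n+32}}{n+2}$.

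Finally I would check that $\lambda_{+}$ is the overall maximum. The quadratic $5n^{2}-24n+32$ has negative discriminant, hence is positive for all $n$, so $\lambda_\pm\in\mathbb R$; and substituting $\lambda=\sqrt5-3$ into $Q(\lambda):=(n+2)\lambda^{2}-2(n-8)\lambda-4(n-4)$ gives $Q(\sqrt5-3)=8(2-\sqrt5)\,n+4(\sqrt5-1)$, which is negative for every integer $n\ge3$. Since $Q$ opens upwards, this forces $\lambda_{+}>\sqrt5-3>-2$, so $\QEC(K_{2,n}^{2})=\lambda_{+}=\dfrac{n-8+\sqrt{5n^{2}-24n+32}}{n+2}$, as asserted. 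I expect the real work to be purely organisational — keeping straight the distances, the weighted inner product on $W_1$ and its $M$-orthogonal basis, and the constraint $\mathbf 1^{\top}f=0$ — and I note that it is precisely the extra symmetry $\sigma$ available when $m=t=2$ which collapses the cubic of Theorem~\ref{th:Ktmn} to a quadratic here, explaining why that theorem excludes this case.
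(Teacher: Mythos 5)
Your proposal is correct, and I have checked the computations you left as ``should reduce to'': the distances are right, the three subspaces $W_1,W_2,W_3$ are mutually orthogonal, $D$-invariant modulo $\mathbb R\mathbf 1$ (so all cross terms vanish), the $W_2$ block is $\left(\begin{smallmatrix}-2&2\\2&-4\end{smallmatrix}\right)$ with top eigenvalue $\sqrt5-3$, $W_3$ is the eigenvalue $-2$ eigenspace, and on $W_1$ the basis $x_1=(1,-1,0)$, $x_2=(n-2,n-2,-4)$ yields $\tilde A=\left(\begin{smallmatrix}-4&4(n-2)\\4(n-2)&4(n-2)(3n-14)\end{smallmatrix}\right)$, $\tilde M=\mathrm{diag}(4,4(n-2)(n+2))$, whose characteristic equation is exactly $(n+2)\lambda^2-2(n-8)\lambda-4(n-4)=0$; your sign check $Q(\sqrt5-3)=8(2-\sqrt5)n+4(\sqrt5-1)<0$ for $n\ge3$ is also correct, so $\lambda_+$ wins. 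This is a genuinely different route from the paper's. The paper proceeds by Lagrange multipliers: it writes out the stationarity system \eqref{22xi}--\eqref{22mu}, treats the degenerate cases $\lambda=1$ and $\mu=0$ separately (recovering $n=4$ and the values $(-1\pm\sqrt5)/2$, which correspond to your $W_2$ block after the affine change $\QEC=2\lambda-2$), and in the generic case derives $(n+2)\lambda^2-(3n-4)\lambda+n-2=0$ --- which under $\lambda\mapsto(\mu+2)/2$ is precisely your quadratic. What your symmetry reduction buys is the elimination of that case analysis: the isotypic decomposition under $S_{n-2}\times\langle\sigma\rangle$ identifies all critical values at once, explains structurally why the answer is algebraic of degree two (whereas the generic $K_{m,n}^t$ gives a cubic), and replaces the verification that a maximizer exists for each candidate $\lambda$ with a clean Rayleigh-quotient comparison. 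The paper's method, on the other hand, is uniform with its proof of Theorem~\ref{th:Ktmn} and requires no representation-theoretic setup. Both are complete and arrive at the same answer.
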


\begin{proof}[Proof of Theorem~\ref{th:Ktmn}.]
If $(t,m,n)$ satisfies the assumptions then the distance matrix of $K_{m,n}^t$ has the following form:
\[D=
\left(
\begin{array}{c|c}
   2\cdot\mathbf{ 1}_{m,m}-2\cdot I_m  & \begin{array}{cc}
    \mathbf{1}_{t,t}+2\cdot I_{t} &
    \mathbf{1}_{t,n-t} \\ \mathbf{1}_{m-t,t} & \mathbf{1}_{m-t,n-t}\end{array} \\ \hline
\begin{array}{cc}
    \mathbf{1}_{t,t}+2\cdot I_{t} &
    \mathbf{1}_{t,m-t} \\ \mathbf{1}_{n-t,t} & \mathbf{1}_{n-t,m-t}\end{array} & 2\cdot\mathbf{ 1}_{n,n}-2\cdot I_n
\end{array}
\right).
\]

Let $f=(x_1,\ldots,x_m, y_1,\ldots y_n)^\top $, with $f^\top f =1$ and $f^\top {\mathbf 1}=0$.
Then, since $\sum_{j=1}^{n}y_j=-\sum_{i=0}^m x_i$, we get
\begin{multline*}
f^\top Df= 2\Big( \big(\sum_{i=1}^m x_i\big)^2 - \sum_{i=1}^m x_i^2 \Big) + 2\sum_{i=1}^m x_i \sum_{j=1}^n y_j + 4 \sum_{i=1}^t x_i y_i + 2\Big(\big(\sum_{j=1}^n y_j\big)^2 - \sum_{j=1}^n y_j^2 \Big)\\
= -2 + 2 \Big (\big(\sum_{i=1}^m x_i\big)^2 + 2\sum_{i=1}^t x_i y_i\Big).
\end{multline*}

The usual method of Lagrange multipliers will be used to find the maximum above,  cf. Proposition 4.1 in \cite{Obata3}. Our Lagrangian function has the form
$$
\mathcal{L}(f, \lambda, \mu)= \Big(\sum_{k=1}^m x_k\Big)^2 + 2\sum_{i=1}^t x_i y_i - \lambda \left(f^\top f  -1\right) - \mu f^\top {\mathbf 1}.
$$
Comparing the partial derivatives to zero and putting $X:=\sum_{i=1}^m x_i$, we obtain:
\begin{align}
0=\frac{\partial \mathcal{L}}{\partial x_i} &= 2X +2y_i -2\lambda x_i -\mu\qquad\hbox{for $i\le t$,}\label{2xi}\\
0=\frac{\partial \mathcal{L}}{\partial x_i} &= 2X -2\lambda x_i -\mu \qquad\hbox{for $t<i\le m$,} \label{2xj}\\
0=\frac{\partial \mathcal{L}}{\partial y_i} &= 2x_i -2\lambda y_i -\mu\qquad\hbox{for $i\le t$,} \label{2yi}\\
0=\frac{\partial \mathcal{L}}{\partial y_j} &= -2\lambda y_j -\mu\qquad\hbox{for $t<j\le n$,} \label{2yj}\\
0=\frac{\partial \mathcal{L}}{\partial \lambda}&= x_1^2+\ldots +x_m^2+y_1^2+\ldots+y_n^2 -1,\label{2lambda}\\
0=\frac{\partial \mathcal{L}}{\partial \mu}&= x_1+\ldots +x_m+y_1+\ldots+y_n.\label{2mu}
\end{align}

Let $\mathcal{S}(D)$ be the set of all $(f,\lambda,\mu)\in\mathbb{R}^{m+n}\times\mathbb{R}\times\mathbb{R}$
satisfying (\ref{2xi}), (\ref{2xj}), (\ref{2yi}), (\ref{2yj}), (\ref{2lambda}) and (\ref{2mu})
and let $\mathcal{S}_{\lambda}(D):=\{\lambda:(f,\lambda,\mu)\in\mathcal{S}(D)\}$, $\lambda_{\max}:=\max\mathcal{S}_{\lambda}(D)$.
Then $\QEC(G)=-2+2\lambda_{\max}$.
For any graph which is not complete we have $\QEC(G) > -1$, cf.  \cite{Bas}, thus $\lambda_{\max} > \frac{1}{2}$.

\textit{Claim.} We have $1\in\mathcal{S}_{\lambda}(D)$ if and only if either $(t,m,n)=(1,2,3)$ or $t\ge2$.

If $t\ge2$ then we can put $\mu:=0$, $\lambda:=1$, $x_1=y_1:=1/2$, $x_2=y_2:=-1/2$,
$x_i=y_j:=0$ for $3\le i\le m$, $3\le j\le n$ and all equations \eqref{2xi}--\eqref{2mu} are satisfied.

Now assume that $t=1$.
Then $m\ge2$ and \eqref{2xi}, \eqref{2yi} imply that $X=\mu$, which, in turn, implies
that $x_i=\mu/2$ for $2\le i\le m$. This leads to $x_1=(3-m)\mu/2$.
Similarly, $y_j=-\mu/2$ for $2\le j\le n$ and $y_1=(n-3)\mu/2$ .
Now \eqref{2xi} yields that either $m+n=5$ or $\mu=0$, however the latter contradicts \eqref{2lambda}.
Therefore $m=2,n=3$.
In this case $\mu$ can be chosen so that \eqref{2lambda} is satisfied.
This concludes the proof of the claim.

Now assume that $(f,\lambda,\mu)\in\mathcal{S}(D)$ and $\lambda>1/2$, $\lambda\ne1$. 
Solving the system \eqref{2xi} and \eqref{2yi} for fixed $1\le i\le t$ we obtain
\begin{equation}\label{eq:ilet}
x_i=\frac{\lambda}{\lambda^2-1}X-\frac{\mu}{2\lambda-2},\qquad
y_i=\frac{1}{\lambda^2-1}X-\frac{\mu}{2\lambda-2},
\end{equation}
while \eqref{2xj}, \eqref{2yj} imply that for $t<i\le m$, $t<j\le n$
\begin{equation}\label{eq:iget}
x_i=\frac{1}{\lambda}X-\frac{\mu}{2\lambda},\qquad
y_j=\frac{-\mu}{2\lambda}.
\end{equation}
Now we recall that
\begin{equation}\label{eq:xxxxxy}
X=\sum_{i=1}^{m}x_i,\qquad -X=\sum_{j=1}^{n}y_j.
\end{equation}

Assume that $t=m=n\ge4$.
From \eqref{eq:ilet}, \eqref{eq:xxxxxy} we get system of equation:
\[
\frac{m\lambda X}{\lambda^2-1}-\frac{m\mu}{2\lambda-2}=X,\qquad
\frac{m X}{\lambda^2-1}-\frac{m\mu}{2\lambda-2}=-X,
\]
which has a nonzero solution $(X,\mu)$ if and only if
\[
2\lambda^2-m\lambda+m-2=0,
\]
hence $\lambda_{\max}=m/2-1$.
Note that in this case $P(\lambda)=m\lambda^2(2\lambda-m+2)$,
hence $m/2-1$ is indeed the largest root of $P(\lambda)$.

Now we assume that $2\le t\le m\le n$, $t<n$.
From \eqref{eq:ilet}, \eqref{eq:iget}, \eqref{eq:xxxxxy} we get equations
\[
\frac{\lambda^3-m\lambda^2-\lambda+m-t}{\lambda(\lambda^2-1)}X=\frac{-(m\lambda-m+t)\mu}{2\lambda(\lambda-1)}.
\]
\[
\frac{\lambda^2+t-1}{\lambda^2-1}X=\frac{(n\lambda-n+t)\mu}{2\lambda(\lambda-1)}.
\]
Note that $n\lambda-n+t>0$, $\lambda^2+t-1>0$, as $\lambda>1$, therefore $X=0$ if and only if $\mu=0$,
in this case however all $x_i,y_j$ would be $0$, which contradicts \eqref{2lambda}. Hence $X\ne0\ne\mu$.

From the second equation
\begin{equation}\label{eq:xxx}
X=\frac{(\lambda+1)(n\lambda-n+t)\mu}{2\lambda(\lambda^2+t-1)},
\end{equation}
hence for $1\le i\le t$
\begin{equation}\label{eq:xxyyii}
x_i=-\frac{(\lambda-n+1)\mu}{2\left(\lambda^2+t-1\right)},\qquad
y_i=-\frac{\left(\lambda^2+\lambda-n+t\right)\mu}{2\lambda \left(\lambda^2+t-1\right)}
\end{equation}
and for $t<i\le m$, $t<j\le n$
\begin{equation}\label{eq:xxyyij}
x_i=-\frac{\left(\lambda^3-n\lambda^2-\lambda+n-t\right)\mu}{2\lambda^2 \left(\lambda^2+t-1\right)},\qquad
y_j=\frac{-\mu}{2\lambda}.
\end{equation}
Equations \eqref{eq:xxx}, \eqref{eq:xxyyii}, \eqref{eq:xxyyij} imply
\eqref{2xi},  \eqref{2xj}, \eqref{2yi}, \eqref{2yj} and $\sum_{j=1}^{n} y_j=-X$.
In addition, the~equality $\sum_{i=1}^{m}x_i=X$, and consequently \eqref{2mu}, holds if and only if
\[
t\lambda^2(\lambda-n+1)+(m-t)\left(\lambda^3-n\lambda^2-\lambda+n-t\right)=-\lambda(\lambda+1)(n\lambda-n+t),
\]
which is equivalent to $P(\lambda)=0$.

Since $P(1)=t(t+4-m-n)$, we have $P(1)>0$ if and only if $(t,m,n)=(1,2,2)$.
In this case $1\notin\mathcal{S}_{\lambda}(D)$, $\lambda_{\max}=\sqrt{2}/2$
and $\QEC(K_{2,2}^1)=\QEC(P_4)=\sqrt{2}-2$, cf.~\cite{Mlo1}.
Moreover, $P(1)=0$ if and only if $(t,m,n)$
is either $(1,2,3)$, or $(2,3,3)$, or $(3,3,4)$, or $(4,4,4)$.
In all these cases the maximal root of $P(\lambda)$ is $1\in\mathcal{S}_{\lambda}(D)$,
and therefore $\lambda_{\max}=\lambda_0=1$. In all the other cases $P(1)<0$, and therefore
$\lambda_{\max}=\lambda_0>1$.
\end{proof}

\begin{proof}[Proof of Theorem~\ref{th:K22n}.]
Now consider $K_{2,n}^{2}$, $n\ge3$. For $f:=(x_1,x_2,y_1,\ldots,y_n)^\top\in\mathbb{R}^{2+n}$
we have
\begin{multline*}
f^\top Df=4x_1 x_2+6(x_1 y_1+x_2 y_2)+2(x_1 y_2+x_2 y_1)+8y_1 y_2
+2(x_1+x_2)(y_3+\ldots+y_n)\\
\phantom{aaaa}+4(y_1+y_2)(y_3+\ldots+y_n)+2(y_3+\ldots+y_3)^2-2(y_3^2+\ldots+y_n^2)\\
=4x_1 x_2+6(x_1 y_1+x_2 y_2)+2(x_1 y_2+x_2 y_1)+8y_1 y_2
-2(x_1+x_2)(x_1+x_2+y_1+y_2)\\
-4(y_1+y_2)(x_1+x_2+y_1+y_2)+2(x_1+x_2+y_1+y_2)^2+2(x_1^2+x_2^2+y_1^2+y_2^2)-2\\
=2x_1^2+2x_2^2+4x_1 x_2+4x_1 y_1+4x_2 y_2+4y_1 y_2-2.
\end{multline*}
Putting
\[
\mathcal{L}(f,\lambda,\mu):=x_1^2+x_2^2+2x_1 x_2+2x_1 y_1+2x_2 y_2+2y_1 y_2- \lambda (f^\top f  -1) - \mu f^\top {\mathbf 1}
\]
we have
\begin{align}
0=\frac{\partial \mathcal{L}}{\partial x_i} &= 2x_1+2x_2+2y_{i} -2\lambda x_i -\mu
\qquad\hbox{for $i=1,2$,}\label{22xi}\\
0=\frac{\partial \mathcal{L}}{\partial y_i} &= 2x_i +2y_{3-i}-2\lambda y_i -\mu
\qquad\hbox{for $i=1,2$,} \label{22xj}\\
0=\frac{\partial \mathcal{L}}{\partial y_j} &= -2\lambda y_j -\mu
\qquad\hbox{for $2<j\le n$,} \label{22yj}\\
0=\frac{\partial \mathcal{L}}{\partial \lambda}&= x_1^2+x_2^2+y_1^2+\ldots+y_n^2 -1,\label{22lambda}\\
0=\frac{\partial \mathcal{L}}{\partial \mu}&= x_1+x_2+y_1+\ldots+y_n.\label{22mu}
\end{align}

As in the previous proof, we define $\mathcal{S}(D)$ as the set of all
$(f,\lambda,\mu)\in\mathbb{R}^{2+n}\times\mathbb{R}\times\mathbb{R}$
satisfying (\ref{22xi}), (\ref{22xj}), (\ref{22yj}), (\ref{22lambda}) and (\ref{22mu})
and $\mathcal{S}_{\lambda}(D):=\{\lambda:(f,\lambda,\mu)\in\mathcal{S}(D)\}$, $\lambda_{\max}:=\max\mathcal{S}_{\lambda}(D)$.
Then $\QEC(G)=-2+2\lambda_{\max}$.

From \eqref{22xi} we have
\begin{equation}\label{22y1y2}
y_1=(\lambda-1)x_1-x_2+\mu/2,\qquad
y_2=(\lambda-1)x_2-x_1+\mu/2,
\end{equation}
and substituting to \eqref{22xj} we get equalities
\begin{equation}\label{22x1x2}
2\lambda(1-\lambda)x_1+2(2\lambda-1)x_2=\lambda\mu,\qquad
2(2\lambda-1)x_1+2\lambda(1-\lambda)x_2=\lambda\mu.
\end{equation}
Now, applying \eqref{22yj} and \eqref{22y1y2} to \eqref{22mu} we get
\begin{equation}\label{22x1x2n4}
2\lambda(\lambda-1)(x_1+x_2)=(n-2-2\lambda)\mu.
\end{equation}

Assume that $(f,1,\mu)\in\mathcal{S}(D)$.
Then (\ref{22xi}), (\ref{22xj}) imply that $x_1=x_2=\mu/2$, $y_1=y_2=0$.
Now (\ref{22yj}) and (\ref{22mu}) imply $y_j=-\mu/2$ for $2<j\le n$, $n=4$ and $\mu=\pm1$.
Therefore $1\in\mathcal{S}_{\lambda}(D)$ if and only if $n=4$.

Now assume that $(f,\lambda,0)\in\mathcal{S}(D)$.
Then $\lambda\ne1$ and from \eqref{22mu}, \eqref{22y1y2} we have $x_1+x_2=0$.
Hence $x_2=-x_1$, $y_1=\lambda x_1$, $y_2=-\lambda x_1$, $x_1\ne0$.
Now \eqref{22xj} implies that $\lambda^2+\lambda-1=0$.
On the other hand, if $\lambda^2+\lambda-1=0$ then we can choose
$x_1$ so that $x_1^2(2+2\lambda^2)=1$, put $x_2=-x_1$, $y_1=\lambda x_1$, $y_2=-\lambda x_1$, $x_1\ne0$
and all equations \eqref{22xi}--\eqref{22mu} are satisfied.
Therefore $(-1\pm\sqrt{5})/2\in\mathcal{S}_{\lambda}(D)$.

Now assume that $(f,\lambda,\mu)\in\mathcal{S}(D)$, $\mu\ne0$, $\lambda\ne1$ and $\lambda^2+\lambda-1\ne0$.
Subtracting \eqref{22xj} for $i=1$ from \eqref{22xj} for $i=2$ and using \eqref{22y1y2} we get
$(\lambda^2+\lambda-1)(x_1-x_2)=0$, consequently, from \eqref{22x1x2n4} and \eqref{22y1y2},  
\[
x_1=x_2=\frac{(n-2-2\lambda)\mu}{4\lambda(\lambda-1)},\qquad
y_1=y_2=\frac{(n\lambda+4-2n)\mu}{4\lambda(\lambda-1)}.
\]
Then, \eqref{22xi}, \eqref{22mu} are satisfied. Then \eqref{22xj} is satisfied if and only if
\begin{equation}
(n+2)\lambda^2-(3n-4)\lambda+n-2=0,
\end{equation}
and one can verify that
\[
\frac{\sqrt{5}-1}{2}<\frac{3n-4+\sqrt{5n^2-24n+32}}{2(n+2)}
\]
for $n\ge3$. It remains to choose $\mu$ such that \eqref{22lambda} holds.
\end{proof}

Now we note that if $m=n$ or $t=m$ then Theorem~\ref{th:Ktmn} can be formulated in a simpler form.

\begin{corollary}\label{cor:ktmm}
If $1\le t\le m$, $m\ge2$, then
\begin{equation}\label{eq:corktmm}
\QEC(K_{m,m}^{t})=\frac{m-6+\sqrt{m^2+4m+4-8t}}{2},
\end{equation}
in particular $\QEC(K_{m,m}^{m})=m-4$ for $m\ge4$.

If $3\le m\le n$, $n\ge4$ then
\begin{equation}\label{eq:corkmmn}
\QEC(K_{m,n}^{m})=\frac{mn   -4 m-2 n+\sqrt{m^2 n^2+4n^2-4m^2 n}}{m+n}.
\end{equation}
\end{corollary}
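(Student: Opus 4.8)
The plan is to read both identities off Theorem~\ref{th:Ktmn}: that theorem gives $\QEC(K_{m,n}^{t})=2\lambda_{0}-2$, where $\lambda_{0}$ is the largest root of the cubic $P(\lambda)=(m+n)\lambda^{3}+(2t-mn)\lambda^{2}+(2t-m-n)\lambda+(m-t)(n-t)$, so it suffices to substitute the special values, factor $P$, and locate its maximal root explicitly. Throughout we work under the hypotheses of Theorem~\ref{th:Ktmn}; in particular this tacitly rules out the disconnected graph $K_{2,2}^{2}$ and the exceptional graph $K_{3,3}^{3}=C_{6}$, which is why the identity $\QEC(K_{m,m}^{m})=m-4$ is asserted only for $m\ge4$ (for $m=3$ it is $C_6$ and for $m=2$ it is disconnected).

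For \eqref{eq:corktmm} set $n=m$, so that
\[
P(\lambda)=2m\lambda^{3}+(2t-m^{2})\lambda^{2}+(2t-2m)\lambda+(m-t)^{2}.
\]
I would then verify, by direct expansion, the factorisation
\[
P(\lambda)=\bigl(m\lambda+t-m\bigr)\bigl(2\lambda^{2}-(m-2)\lambda+t-m\bigr).
\]
The linear factor vanishes at $\lambda=\tfrac{m-t}{m}\in[0,1]$, while the quadratic factor $q(\lambda):=2\lambda^{2}-(m-2)\lambda+t-m$ has discriminant $(m+2)^{2}-8t$ and larger root $\lambda_{0}=\tfrac14\!\left((m-2)+\sqrt{(m+2)^{2}-8t}\,\right)$. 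To see that $\lambda_{\max}=\lambda_{0}$ one checks the sign
\[
q\!\left(\frac{m-t}{m}\right)=\frac{2(m-t)\bigl(2m-t-m^{2}\bigr)}{m^{2}}\le0,
\]
valid because $m-t\ge0$ and $2m-t-m^{2}=-\bigl((m-1)^{2}+t-1\bigr)<0$ for $m\ge2$, $t\ge1$; since $q$ opens upward, $\tfrac{m-t}{m}$ then lies between the two roots of $q$, so $\lambda_{0}\ge\tfrac{m-t}{m}$ and $\lambda_{0}$ is indeed the maximal root of $P$. Hence $\QEC(K_{m,m}^{t})=2\lambda_{0}-2=\tfrac12\bigl(m-6+\sqrt{m^{2}+4m+4-8t}\,\bigr)$, and specialising $t=m$ gives $\sqrt{(m-2)^{2}}=m-2$ for $m\ge2$, so $\QEC(K_{m,m}^{m})=m-4$.

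For \eqref{eq:corkmmn} set $t=m$ with $m\le n$. Then $(m-t)(n-t)=0$ and
\[
P(\lambda)=\lambda\Bigl((m+n)\lambda^{2}-m(n-2)\lambda-(n-m)\Bigr).
\]
Apart from $\lambda=0$, the roots of $P$ are those of $g(\lambda):=(m+n)\lambda^{2}-m(n-2)\lambda-(n-m)$; the product of these two roots equals $-\tfrac{n-m}{m+n}\le0$, so one of them is $\le0$ and the other, $\lambda_{0}\ge0$, is the maximal root of $P$. The discriminant simplifies as $m^{2}(n-2)^{2}+4(m+n)(n-m)=m^{2}n^{2}+4n^{2}-4m^{2}n$, whence $\lambda_{0}=\tfrac{m(n-2)+\sqrt{m^{2}n^{2}+4n^{2}-4m^{2}n}}{2(m+n)}$ and $\QEC(K_{m,n}^{m})=2\lambda_{0}-2$ reduces to \eqref{eq:corkmmn} after combining fractions.

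Essentially everything is routine once Theorem~\ref{th:Ktmn} is available; the only steps that are not pure bookkeeping are guessing the two factorisations of $P$ (both confirmed by expansion) and, in the case $m=n$, the short sign computation showing that the largest root comes from the quadratic factor rather than from $m\lambda+t-m$. The values $(t,m,n)=(2,2,2)$ and $(3,3,3)$ have to be kept out precisely because they lie outside the range of Theorem~\ref{th:Ktmn}, and this is the reason for the restriction $m\ge4$ in the ``in particular'' clause.
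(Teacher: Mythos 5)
Your proof is correct and follows exactly the paper's route: specialise Theorem~\ref{th:Ktmn}, factor $P(\lambda)$ (into a linear times a quadratic when $n=m$, and as $\lambda$ times a quadratic when $t=m$), and read off the maximal root. You in fact supply more detail than the paper's one-line proof — notably the sign check $q\bigl(\tfrac{m-t}{m}\bigr)\le0$ showing the maximal root comes from the quadratic factor, and the explicit observation that $(t,m)=(2,2)$ and $(3,3,3)$ must be excluded — and your factorisation for $n=m$ is the correct one (the paper's displayed factorisation carries a spurious leading factor of $m$).
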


\begin{proof}
If $n=m$ then
\[
P(\lambda)=m(m\lambda-m+t) \left(2\lambda^2-m\lambda+2\lambda-m+t\right)
\]
and for $t=m$ we get
\[
P(\lambda)=\lambda\left((m+n)\lambda^2+m(2-n)\lambda+m-n\right),
\]
which leads to \eqref{eq:corktmm} and \eqref{eq:corkmmn}.
\end{proof}

\begin{corollary}
The only almost complete bipartite graphs $K_{m,n}^{t}$ which are connected and of $QE$ class are the following:
$K_{1,n}$ (the ``star''), $n\ge1$, $K_{2,2}=C_{4}$,
$K_{2,2}^{1}=P_4$, $K_{2,3}^{1}$,
$K_{2,3}^{2}=P_5$, $K_{2,4}^{2}$, $K_{3,3}^{2}$, $K_{3,3}^{3}=C_6$, $K_{3,4}^{3}$, $K_{4,4}^{4}$ (the skeleton of the cube).
\end{corollary}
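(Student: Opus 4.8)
The plan is to read the classification off directly from \eqref{eq:completebi} and Theorems~\ref{th:Ktmn}--\ref{th:K22n}, using that $K_{m,n}^{t}$ is of QE class precisely when $\QEC(K_{m,n}^{t})\le 0$. Throughout I would keep the standing convention $0\le t\le m\le n$, $m\ge 1$, and discard at the outset the two disconnected members $K_{1,n}^{1}$ and $K_{2,2}^{2}$. The case $t=0$ is immediate: by \eqref{eq:completebi}, $\QEC(K_{m,n})=2-\frac2m-\frac2n$, which is $\le 0$ if and only if $\frac1m+\frac1n\ge 1$; under $m\le n$ this forces $m=1$ (any $n\ge 1$) or $m=n=2$, yielding $K_{1,n}$ and $K_{2,2}=C_4$.

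Next I would treat $1\le t\le m\le n$ with $(t,m,n)\neq(3,3,3)$ and with $t<m$ whenever $t\in\{1,2\}$, so that Theorem~\ref{th:Ktmn} applies and $\QEC(K_{m,n}^{t})=2\lambda_{0}-2$ with $\lambda_{0}$ the largest root of the cubic $P$. Since $P$ has positive leading coefficient $m+n$, it possesses a root strictly greater than $1$ whenever $P(1)<0$, hence $\lambda_0\le 1$ forces $P(1)\ge 0$. The sign analysis already carried out inside the proof of Theorem~\ref{th:Ktmn} shows that $P(1)=t(t+4-m-n)$ is positive only for $(t,m,n)=(1,2,2)$ (where $\lambda_{0}=\sqrt2/2<1$, so $K_{2,2}^{1}=P_4$ is of QE class) and vanishes only for $(t,m,n)\in\{(1,2,3),(2,3,3),(3,3,4),(4,4,4)\}$ (where $\lambda_{0}=1$, hence $\QEC=0$), while $P(1)<0$ in all other cases, forcing $\QEC>0$. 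Together with the separately known value $\QEC(K_{3,3}^{3})=\QEC(C_6)=0$, this contributes the QE-class graphs $K_{2,3}^{1}$, $K_{3,3}^{2}$, $K_{3,4}^{3}$, $K_{4,4}^{4}$ and $K_{3,3}^{3}=C_6$.

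Finally, the only remaining connected class is $t=m=2$, $n\ge 3$, where Theorem~\ref{th:K22n} gives $\QEC(K_{2,n}^{2})=\bigl(n-8+\sqrt{5n^{2}-24n+32}\bigr)/(n+2)$. This is $\le 0$ if and only if $\sqrt{5n^{2}-24n+32}\le 8-n$; for $3\le n\le 8$ squaring reduces this to $(n-4)(n+2)\le 0$, i.e. $n\le 4$, and for $n\ge 5$ the same computation (or simply positivity of the numerator once $n\ge 8$) gives $\QEC>0$, so only $K_{2,3}^{2}=P_5$ and $K_{2,4}^{2}$ survive. Collecting the three lists and recalling the identifications $K_{1,1}=P_2$, $K_{1,2}=P_3$, $K_{2,2}=C_4$, $K_{2,2}^{1}=P_4$, $K_{2,3}^{2}=P_5$, $K_{3,3}^{3}=C_6$ from the Preliminaries produces exactly the asserted family. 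I do not expect any genuine obstacle: the proof is pure bookkeeping over finitely many parameter ranges, and the only point requiring a little care is the behaviour of the cubic $P$ at $\lambda=1$, which is already settled within the proof of Theorem~\ref{th:Ktmn}, so the corollary follows essentially immediately.
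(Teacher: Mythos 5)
Your proposal is correct and is exactly the case-by-case bookkeeping the paper leaves implicit (the corollary is stated without proof, immediately after Theorems~\ref{th:Ktmn} and \ref{th:K22n}): the sign analysis of $P(1)=t(t+4-m-n)$, the separate treatment of $K_{2,n}^{2}$ and of $(t,m,n)=(3,3,3)$, and the $t=0$ case via \eqref{eq:completebi} all check out. No gaps.
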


One can check that each connected graph $K_{m,n}^{t}$ which is not of $QE$ class
contains an isometric copy of $K_{2,3}$.

\section{Tanaka quintuple in a graph}

Now we are going to present a result of
Tanaka \cite{Tan}, who proved that in the case of bipartite graphs,
type $QE$ can be characterized by absence of some kind of 5-element subsets, see also \cite{RothWinkler}.

\begin{definition}
A sequence of vertices $(v_1,v_2,v_3,v_4,v_5)$ in a graph $G=(V,E)$ is called \textit{Tanaka quintuple}
if
\begin{equation}\label{tanaka1}
\{v_1,v_2\},\{v_3,v_4\}\in E,
\end{equation}
\begin{equation}\label{tanaka2}
d(v_1,v_3)=d(v_2,v_4)=d(v_1,v_4)-1=d(v_2,v_3)-1,
\end{equation}
\begin{equation}\label{tanaka3}
d(v_5,v_2)=d(v_5,v_1)+1,\qquad
d(v_5,v_3)=d(v_5,v_4)+1.
\end{equation}
\end{definition}

Now we present Theorem~3 in~\cite{Tan} in a form adapted to our terminology.
The first part was not stated in~\cite{Tan}, but it easily follows from the proof.

\begin{theorem}[Tanaka \cite{Tan}]
\label{pro:tanaka}
If a graph $G$ admits Tanaka quintuple then $G$ is of non-$QE$ class. 
In addition, if $G$ is a bipartite graph then $G$ is of $QE$ class
if and only if $G$ has no Tanaka quintuple.
\end{theorem}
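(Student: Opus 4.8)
The plan is to prove the two assertions by separate routes. The first assertion — that a Tanaka quintuple forces the non-$QE$ class for an arbitrary connected graph — I would prove by exhibiting a single admissible test vector $f$ (i.e. $\mathbf 1^\top f=0$) supported on the five vertices $v_1,\dots,v_5$ with $f^\top D f>0$; this already yields $\QEC(G)>0$, and it uses neither bipartiteness nor isometric embeddings, so it also disposes of the non-bipartite case. For the ``in addition'' part, the implication ``$QE$ class $\Rightarrow$ no Tanaka quintuple'' is just the contrapositive of the first assertion, so only the converse ``($G$ bipartite and no Tanaka quintuple) $\Rightarrow$ $QE$ class'' needs a separate argument, and there I would simply read off condition~(iv) of Theorem~\ref{tan-cite}.

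For the test vector, fix a Tanaka quintuple $(v_1,\dots,v_5)$ and set $k:=d(v_1,v_3)$, $a:=d(v_1,v_5)$, $b:=d(v_4,v_5)$. Then \eqref{tanaka1}--\eqref{tanaka2} say $d(v_1,v_2)=d(v_3,v_4)=1$, $d(v_1,v_3)=d(v_2,v_4)=k$, $d(v_1,v_4)=d(v_2,v_3)=k+1$, and \eqref{tanaka3} says $d(v_2,v_5)=a+1$, $d(v_3,v_5)=b+1$; a short case check shows these relations already force $v_1,\dots,v_5$ to be pairwise distinct (e.g. $v_5=v_3$ would give $0=d(v_5,v_3)=d(v_5,v_4)+1\ge 1$). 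For a real parameter $s$ put
$$f_s:=(1+s)\,\delta_{v_1}-\delta_{v_2}-\delta_{v_3}+\delta_{v_4}-s\,\delta_{v_5}\in\mathbb{R}^V,$$
so that $\mathbf 1^\top f_s=0$ and $f_s\neq 0$. Expanding $f_s^\top D f_s=2\sum_{i<j}(f_s)_i(f_s)_j\,d(v_i,v_j)$ using only the distance relations above, the $s$-independent part cancels (this is precisely the four-point identity encoded by \eqref{tanaka2}), the linear term — fed by \eqref{tanaka3} — equals $4s$, and the quadratic term equals $-2as^2$, giving
$$f_s^\top D f_s=4s-2as^2.$$
Choosing $s>0$ small makes this strictly positive, whence $\QEC(G)\ge f_s^\top D f_s/\|f_s\|^2>0$ and $G$ is of non-$QE$ class. (This is essentially the computation underlying Tanaka's proof.)

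For the converse in the bipartite case: if $G$ is bipartite and has no Tanaka quintuple, then condition~(iv) of Theorem~\ref{tan-cite} holds verbatim, so by (iv)$\Rightarrow$(iii) of that theorem $\pi(G)=[-1,1]$; in particular $q^D\succeq 0$ for every $q\in[0,1]$, which is exactly the statement that $G$ is of $QE$ class. (Alternatively, for bipartite $G$ one has $q^D=\operatorname{diag}(\varepsilon)\,|q|^D\operatorname{diag}(\varepsilon)$ for $q<0$, with $\varepsilon\colon V\to\{\pm1\}$ the two-colouring, so that $\pi(G)$ is symmetric about $0$ and ``$QE$ class'' coincides with ``$\pi(G)=[-1,1]$''; one then quotes (iii)$\Leftrightarrow$(iv) of Theorem~\ref{tan-cite} directly.) Combined with the first assertion, this gives the stated equivalence.

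The only genuine computation is the expansion in the middle step, and the point to get right is the vanishing of the $s^0$-term: conditions \eqref{tanaka2} are tailored precisely so that $(1,-1,-1,1)$ on $v_1,\dots,v_4$ is annihilated by the corresponding $4\times 4$ block of $D$ modulo $\mathbf 1$, and conditions \eqref{tanaka3} are exactly what makes the perturbation by $v_5$ tilt $f_s^\top D f_s$ to the positive side at first order in $s$. For the converse I do not expect any new obstacle beyond observing that, under the bipartiteness hypothesis, condition~(iv) of Theorem~\ref{tan-cite} is literally ``$G$ has no Tanaka quintuple'', which is immediate from the two statements.
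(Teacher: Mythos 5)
Your proposal is correct. Note that the paper does not actually prove Theorem~\ref{pro:tanaka}: it quotes it from Tanaka's work and merely remarks that the first assertion ``easily follows from the proof'' there, so there is no in-paper argument to compare against line by line. What you supply is exactly the missing content. I checked your expansion: with $f_s=(1+s)\delta_{v_1}-\delta_{v_2}-\delta_{v_3}+\delta_{v_4}-s\delta_{v_5}$ the constant term of $f_s^\top Df_s$ vanishes by \eqref{tanaka2}, the linear term is $4s$ by \eqref{tanaka3}, and the quadratic term is $-2as^2$ with $a=d(v_1,v_5)$, so $f_s^\top Df_s=4s-2as^2>0$ for small $s>0$; your distinctness check for the five vertices (needed so that $f_s$ is a genuine vector on $V$) also goes through, and in fact one gets $d(v_1,v_3)\ge1$ as well. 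This is precisely the style of argument the authors themselves use for the \emph{modified} Tanaka quintuple in Theorem~\ref{modifiedtanaka}, where an explicit test function supported on the quintuple is exhibited and $\sum f(v_i)f(v_j)d(v_i,v_j)=2j>0$ is computed; your one-parameter family $f_s$ is the natural analogue for the original quintuple. For the ``in addition'' part, reducing to condition (iv) of Theorem~\ref{tan-cite} is legitimate, since that theorem is taken as known in the paper and its quintuple condition is verbatim the definition of a Tanaka quintuple; your parenthetical remark that $\pi(G)$ is symmetric about $0$ for bipartite $G$ correctly explains why ``$[0,1]\subseteq\pi(G)$'' (the QE property) upgrades to ``$\pi(G)=[-1,1]$'' in the bipartite case, closing the equivalence.
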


Let us collect some properties.

\begin{lemma}\label{le:tanaka}
Assume that $(v_1,v_2,v_3,v_4,v_5)$ is a Tanaka quintuple in a graph $G$.

If $(v_1=a_0,a_1,\ldots,a_r=v_3)$, $(v_2=b_0,b_1,\ldots,b_r=v_4)$ are geodesics then
\[
\{a_0,a_1,\ldots,a_r\}\cap\{b_0,b_1,\ldots,b_r\}=\emptyset\quad\hbox{and}\quad
v_5\notin\{a_0,a_1,\ldots,a_r,b_0,b_1,\ldots,b_r\}.
\]

If $(v_5=e_0,e_1,\ldots,e_p=v_1)$, $(v_5=f_0,f_1,\ldots,f_q=v_4)$ are geodesics then
\[
v_4\notin\{e_0,e_1,\ldots,e_p\}\quad\hbox{and}\quad
v_1\notin\{f_0,f_1,\ldots,f_q\}.
\]
\end{lemma}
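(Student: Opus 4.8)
The plan is to prove each of the four non-membership assertions by a short argument using the triangle inequality together with the defining relations \eqref{tanaka1}, \eqref{tanaka2}, \eqref{tanaka3} and the fact that geodesics realize distances exactly. Write $d(v_1,v_3)=d(v_2,v_4)=:r$ and $d(v_1,v_4)=d(v_2,v_3)=r+1$ from \eqref{tanaka2}.

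First I would handle the disjointness of the two geodesics $(v_1=a_0,\dots,a_r=v_3)$ and $(v_2=b_0,\dots,b_r=v_4)$. Suppose some vertex $w$ lies on both, say $w=a_i=b_j$. Then $d(v_1,v_3)=d(v_1,w)+d(w,v_3)=i+(r-i)=r$ and similarly $d(v_2,v_4)=j+(r-j)=r$, so $d(v_1,w)=i$, $d(w,v_3)=r-i$, $d(v_2,w)=j$, $d(w,v_4)=r-j$. Now estimate $d(v_1,v_4)\le d(v_1,w)+d(w,v_4)=i+(r-j)$ and $d(v_2,v_3)\le d(v_2,w)+d(w,v_3)=j+(r-i)$. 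Adding these two inequalities gives $2(r+1)\le 2r$, a contradiction; hence the geodesics are disjoint. For $v_5\notin\{a_0,\dots,a_r,b_0,\dots,b_r\}$: if $v_5=a_i$ then $d(v_5,v_1)=i$ and $d(v_5,v_3)=r-i$, so by \eqref{tanaka3} $d(v_5,v_4)=r-i-1$ and $d(v_5,v_2)=i+1$; but then $d(v_1,v_4)\le d(v_1,a_i)+d(a_i,v_4)=i+(r-i-1)=r-1<r+1$, contradicting \eqref{tanaka2}. Symmetrically, if $v_5=b_j$ then $d(v_5,v_2)=j$, $d(v_5,v_4)=r-j$, so $d(v_5,v_1)=j-1$, $d(v_5,v_3)=r-j+1$, and $d(v_2,v_3)\le (j)+(r-j+1)$ works out fine, so instead use $d(v_1,v_4)$ or better: from $d(v_5,v_1)=j-1$ and $d(v_5,v_3)=r-j+1$ we get $d(v_1,v_3)\le (j-1)+(r-j+1)=r$ which is consistent, so the sharper route is $d(v_5,v_4)=r-j$ combined with $d(v_5,v_1)=j-1$ giving $d(v_1,v_4)\le (j-1)+(r-j)=r-1$, again contradicting $d(v_1,v_4)=r+1$.

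For the second half, suppose $(v_5=e_0,\dots,e_p=v_1)$ is a geodesic with $p=d(v_5,v_1)$ and $v_4=e_i$ for some $i$. Then $d(v_5,v_4)=i$ and $d(v_4,v_1)=p-i$; but \eqref{tanaka2} gives $d(v_1,v_4)=r+1$, so $p-i=r+1$, while \eqref{tanaka3} gives $d(v_5,v_3)=d(v_5,v_4)+1=i+1$. Then $d(v_1,v_3)\le d(v_1,v_4)+d(v_4,v_3)$ is too weak; instead use $d(v_5,v_1)=p=i+(r+1)$ and $d(v_5,v_3)=i+1$, hence $d(v_1,v_3)\ge d(v_5,v_1)-d(v_5,v_3)=p-(i+1)=(i+r+1)-(i+1)=r$, consistent again — so the efficient contradiction comes from $\{v_3,v_4\}\in E$: since $v_4=e_i$ and $d(v_5,v_4)=i$, $d(v_5,v_3)\in\{i-1,i,i+1\}$, but \eqref{tanaka3} forces $d(v_5,v_3)=i+1$, which is allowed; the real obstruction is that $e_{i}=v_4$ forces the subpath $(e_i,\dots,e_p)$ to be a geodesic from $v_4$ to $v_1$ of length $p-i$, so $d(v_1,v_4)=p-i$, combined with $d(v_5,v_1)=d(v_5,v_4)+ (r+1)$ and then \eqref{tanaka3} $d(v_5,v_2)=d(v_5,v_1)+1=d(v_5,v_4)+r+2$, while the triangle inequality via $v_4$ and the edge $v_3v_4$ yields $d(v_5,v_2)\le d(v_5,v_4)+d(v_4,v_2)=d(v_5,v_4)+r$ (using $d(v_2,v_4)=r$), a contradiction. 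The case $v_1\notin\{f_0,\dots,f_q\}$ is entirely symmetric, swapping the roles $v_1\leftrightarrow v_4$, $v_2\leftrightarrow v_3$ (note \eqref{tanaka2} and the relations in \eqref{tanaka3} are symmetric under this swap together with $v_5\mapsto v_5$).

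I expect the main subtlety — not difficulty, but the thing requiring care — to be bookkeeping of which triangle inequality to invoke, since several of the obvious estimates come out merely consistent and one must pick the one that exploits the ``$+1$'' in \eqref{tanaka2} or \eqref{tanaka3}, i.e. the asymmetry that makes a Tanaka quintuple special. In every case the winning move is to combine a relation of the form $d(v_5,v_i)=d(v_5,v_j)+1$ or $d(v_i,v_k)=d(v_i,v_\ell)-1$ with an edge relation from \eqref{tanaka1} to produce a strict violation of the triangle inequality by exactly one unit. No computation beyond addition of integers is needed, so the proof is short; the only thing to double-check is that each invoked identity is legitimately one of \eqref{tanaka1}–\eqref{tanaka3} after the symmetry reduction.
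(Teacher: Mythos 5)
Your proof is correct and follows essentially the same route as the paper's: in each case one combines a triangle inequality through the supposed common vertex with the relations \eqref{tanaka2}--\eqref{tanaka3} to force a distance to be at least one unit too small or too large. The only cosmetic difference is that for disjointness you add the two triangle inequalities to dispose of all cases of $i$ versus $j$ at once, where the paper splits into $i<j$, $j<i$, $i=j$; otherwise the contradictions you reach are the same ones.
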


\begin{proof}
Assume that $(v_1=a_0,a_1,\ldots,a_r=v_3)$, $(v_2=b_0,b_1,\ldots,b_r=v_4)$ are geodesics
and that $a_i=b_j$ for some $0<i,j<r$. If $i<j$ then $d(v_1,v_4)\le i+r-j<r$, which is impossible,
similarly if $j<i$. If $i=j$ then $d(v_1,v_4)\le r$, which contradicts~(\ref{tanaka2}).
Therefore these geodesics must be disjoint.
If $v_5=a_i$, $0<i<r$, then $d(v_5,v_3)=r-i$, which, by (\ref{tanaka3}), implies that $d(v_5,v_4)=r-i-1$.
Consequently $d(v_1,v_4)=r-1$, which contradicts~(\ref{tanaka2}).

If $(v_5=e_0,e_1,\ldots,e_p=v_1)$ is a geodesics and $v_4=e_i$, $0<i<p$,
then $d(v_5,v_2)=i+r$, $d(v_5,v_1)=i+r+1$, which contradicts~(\ref{tanaka3}).

The remaining statements can be proved in a similar manner.
\end{proof}

\section{Theta graphs}\label{sec:tgwt}

From now on we are going to study a special family of graphs, which we will call \textit{theta graphs}.
Let $P_{n+1}=(x_0,\dots,x_n)$ denote the path graph, i.e., graph with $n+1$ vertices $\{x_0,\ldots,x_n\}$ and $n$ edges joining the consecutive vertices $\{x_{i-1},x_i\}$ ($i=1,\ldots,n$).
\begin{definition}\label{Deftheta}
    For given integers $1\leq \alpha,\beta,\gamma$, with at most one of them equal $1$,
we define a \textit{theta graph} $\Theta(\alpha,\beta,\gamma)$ by taking three path graphs 
$$
P_{\alpha+1}=  (x_0,\ldots,x_{\alpha}) ,\ 
P_{\beta+1}=   ( y_0,\ldots,y_\beta),\ 
P_{\gamma+1}= (z_0,\ldots,z_\gamma),\ 
$$
with the same endpoints $x_0=y_0=z_0$,  $x_{\alpha}=y_{\beta}=z_{\gamma}$ and the remaining vertices $x_j$ ($j=1,\dots,\alpha-1$), $y_j$  ($j=1,\dots,\beta-1$) and $z_{j}$ ($j=1,\dots,\gamma-1$) being mutually different.
\end{definition} 
Therefore $\Theta(\alpha,\beta,\gamma)$ has $\alpha+\beta+\gamma-1$ vertices
and $\alpha+\beta+\gamma$ edges, see Figure~\ref{fig:theta234}.
For example, $\Theta(2,2,2)=K_{3,2}$.

\begin{center}
\includegraphics[width=250pt]{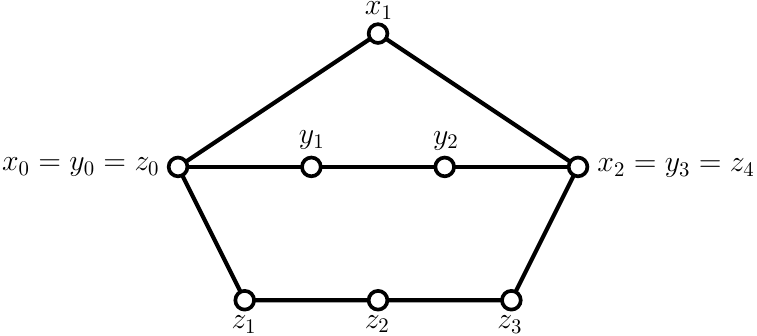}
\captionof{figure}{Theta graph $\Theta(2,3,4)$}\label{fig:theta234}
\end{center}

We are going to prove the following two results:

\begin{theorem}\label{thm:thetaqe}
Assume that $2\le\beta\le\gamma$. If either
\begin{itemize}
\item $\beta=2$, or
\item$\beta=3$, or
\item $\beta$ and $\gamma$ are odd,
\end{itemize}
then $\Theta(1,\beta,\gamma)$ is of $QE$ class.
\end{theorem}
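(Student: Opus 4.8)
The plan is to produce, in each case, an explicit quadratic embedding $\varphi$ of $\Theta:=\Theta(1,\beta,\gamma)$ into a Hilbert space. Write $u,v$ for the two common endpoints of the three defining paths (so $\{u,v\}\in E$ realises the length-$1$ path), $u=p_0,p_1,\dots,p_\beta=v$ for the length-$\beta$ path and $u=q_0,q_1,\dots,q_\gamma=v$ for the length-$\gamma$ path. The fact I would use throughout is that the cycle $C:=C_{\gamma+1}$ formed by the length-$\gamma$ path and the edge $\{u,v\}$ sits \emph{isometrically} in $\Theta$: a geodesic between two of the $q_i$ never profits from detouring along the length-$\beta$ path, so $d_\Theta(q_i,q_j)=d_C(q_i,q_j)$, and in particular $d_\Theta(u,q_i)=d_C(q_0,q_i)$, $d_\Theta(v,q_i)=d_C(q_\gamma,q_i)$. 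Every cycle is of $QE$ class (classical; even cycles embed isometrically in hypercubes, odd cycles are handled by a short spectral computation), so I may fix a quadratic embedding $\psi$ of $C$, set $\varphi(q_i):=\psi(q_i)$ — which already realises all distances among $u,q_1,\dots,q_{\gamma-1},v$ — and then only adjoin the interior vertices of the length-$\beta$ path. The cases $\beta=2$, $\beta=3$, and ``$\beta,\gamma$ odd with $\beta\ge4$'' exhaust the hypothesis. The case $\beta=3$ is then immediate: for the length-$3$ path $u-p_1-p_2-v$, adjoin one new orthonormal vector $\eta$ and set $\varphi(p_1):=\psi(q_0)+\eta$, $\varphi(p_2):=\psi(q_\gamma)+\eta$; since $\|\psi(q_0)-\psi(q_\gamma)\|^2=d_C(q_0,q_\gamma)=1$, the squared distances inside $\{p_1,p_2,u,v\}$ come out $1,1,1,2,2$ as required, and $\|\varphi(p_1)-\psi(q_i)\|^2=1+d_C(q_0,q_i)=d_\Theta(p_1,q_i)$ because $|d_C(q_0,q_i)-d_C(q_\gamma,q_i)|\le1$ (triangle inequality, $q_0\sim q_\gamma$) forces $p_1\to u\to q_i$ to be shortest; symmetrically for $p_2$.

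For $\beta=2$, let $w=p_1$ be the unique interior vertex, so that $d_\Theta(w,q_i)=1+\min\{i,\gamma-i\}$. I would now take $\psi$ \emph{centred at its centroid}; since $C$ is vertex-transitive, hence transmission-regular, all $\psi(q_k)$ lie on a common sphere of some radius $R$ about the origin and $\langle\psi(q_j),\psi(q_k)\rangle=R^2-\tfrac12d_C(q_j,q_k)$. Adjoin a new orthonormal vector $\eta$. If $\gamma$ is odd then $C$ is bipartite, hence $|d_C(q_0,q_i)-d_C(q_\gamma,q_i)|=1$ for every $i$; setting $\varphi(w):=\tfrac12\big(\psi(q_0)+\psi(q_\gamma)\big)+\tfrac{\sqrt3}{2}\eta$, the $R^2$-terms cancel (a midpoint is used) and $\|\varphi(w)-\psi(q_i)\|^2=\tfrac12+\tfrac12\big(d_C(q_0,q_i)+d_C(q_\gamma,q_i)\big)=1+\min\{i,\gamma-i\}=d_\Theta(w,q_i)$. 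If $\gamma$ is even then $C$ is an odd cycle with $d_C(q_{\gamma/2},q_i)=\tfrac\gamma2-\min\{i,\gamma-i\}$ (the geodesic from $q_{\gamma/2}$ never wraps around the cycle); setting $\varphi(w):=-\psi(q_{\gamma/2})+h\eta$ with $h^2:=\tfrac{\gamma+2}{2(\gamma+1)}$ and using $R^2=\tfrac{\gamma(\gamma+2)}{8(\gamma+1)}$, one gets $\|\varphi(w)-\psi(q_i)\|^2=4R^2+h^2-d_C(q_{\gamma/2},q_i)=1+\min\{i,\gamma-i\}=d_\Theta(w,q_i)$; here $h^2\ge0$ is exactly the inequality $\tfrac{\gamma}{\gamma+1}<1$.

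For $\beta,\gamma$ odd with $\beta\ge4$ (so $\beta\ge5$, $\gamma\ge\beta$), both $C_{\beta+1}$ and $C_{\gamma+1}$ are even cycles, each isometrically embeddable in a hypercube with $u\mapsto\mathbf0$ and $v$ a unit coordinate vector. I would superimpose the two hypercube embeddings, identifying in both the single coordinate that separates $u$ from $v$: send $p_i$ to its $C_{\beta+1}$-cube vector, read off on its $(\beta-1)/2$ private $\beta$-coordinates together with the shared coordinate, and symmetrically for $q_j$. A routine verification shows all Hamming distances equal $d_\Theta$, the only point needing attention being that for $p_i$ on the length-$\beta$ path and $q_j$ on the length-$\gamma$ path a geodesic runs through $u$ or through $v$ according to which halves of the two cycles they lie in. Hence $\Theta$ embeds isometrically in a hypercube, in particular is of $QE$ class. (Alternatively, $\Theta$ being bipartite, this case follows from Theorem~\ref{th:thetawith5t} together with Theorem~\ref{pro:tanaka} once one verifies $\Theta$ admits no Tanaka quintuple.)

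I expect the genuinely delicate point to be the sub-case $\beta=2$ with $\gamma$ even: there $C_{\gamma+1}$ is an odd cycle, not a subgraph of a hypercube, the naive midpoint placement of $\varphi(w)$ breaks down exactly at the antipode $q_{\gamma/2}$, and one is forced onto the symmetry axis through $\psi(q_{\gamma/2})$. Carrying this out cleanly needs the centred embedding of a transmission-regular cycle (so that all $\psi(q_k)$ lie on one sphere) and the explicit value of $R^2$, the inequality $R^2\le\tfrac{\gamma+2}{8}$ being precisely what keeps the added coordinate $h^2=\tfrac{\gamma+2}{2(\gamma+1)}$ nonnegative. Everything else reduces to short checks of finitely many squared-distance identities, plus the (citable) fact that every cycle is of $QE$ class.
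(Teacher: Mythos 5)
Your constructions check out, but your route is genuinely different from the paper's. The paper handles $\beta=2$ by computing the quadratic form $f^\top Df$ on mean-zero vectors directly and reducing conditional negativity to positive semidefiniteness of an explicit $n\times n$ matrix $A_n$, which it then exhibits as a nonnegative combination of all-ones blocks $\mathbf J^n(p,q)$; it disposes of $\beta=3$ by citing Proposition~3.15 of \cite{ObataPrim6v}; and for $\beta,\gamma$ odd it uses exactly your parenthetical alternative, namely bipartiteness plus the absence of a Tanaka quintuple (Theorem~\ref{th:thetawith5t}) plus Theorem~\ref{pro:tanaka}. You instead build explicit quadratic embeddings in every case by first embedding the isometric cycle $C_{\gamma+1}$ (formed by the $\gamma$-path and the edge $uv$) and then adjoining the interior vertices of the $\beta$-path; I verified your key identities, in particular that for the centred embedding of the odd cycle one has $R^2=\gamma(\gamma+2)/(8(\gamma+1))$, so that $4R^2+h^2=(\gamma+2)/2$ with $h^2=(\gamma+2)/(2(\gamma+1))\ge0$, and that the midpoint placement for $\gamma$ odd follows from polarization alone. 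Your approach buys a unified, self-contained and constructive proof (no external citation is needed for $\beta=3$, and the $\beta=2$ case avoids the matrix decomposition entirely), at the price of invoking the classical facts that every cycle is of QE class and that a transmission-regular graph has a centred embedding on a sphere; the paper's matrix identity $A_n=B_n$ is more computational but keeps everything at the level of the distance matrix, which is the currency used throughout the rest of the paper. The only place where you wave your hands is the ``routine verification'' of the superimposed hypercube embedding for $\beta,\gamma$ odd, $\beta\ge5$; the four-case check of Hamming distances between the two paths does go through (the shared coordinate records which endpoint a vertex is closer to), but since your fallback via Theorem~\ref{th:thetawith5t} is exactly the paper's argument, nothing is at risk there.
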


\begin{theorem}\label{thm:thetanonqe}
Assume that $1\le\alpha\le\beta\le\gamma$, $\beta\ge2$. If either
\begin{itemize}
\item $\alpha=\beta=2$, or
\item $\alpha=2$, $\beta=3$, $\gamma$ is even, or
\item $\alpha=2$, $\beta\ge4$, or
\item $\alpha\ge3$,
\end{itemize}
then $\Theta(\alpha,\beta,\gamma)$ is of non-$QE$ class.
\end{theorem}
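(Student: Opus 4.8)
\textbf{Proof proposal for Theorem~\ref{thm:thetanonqe}.}

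The plan is to exhibit, in each of the four listed cases, a Tanaka quintuple inside $\Theta(\alpha,\beta,\gamma)$ (or a modified Tanaka quintuple for non-bipartite cases), and then invoke Theorem~\ref{pro:tanaka} together with the last clause of Theorem~\ref{tan-cite}. Since Theorem~\ref{th:thetawith5t} (referenced in the introduction) is advertised as characterizing exactly which theta graphs admit a Tanaka quintuple, the bulk of the work is to check that each of the four families falls under that characterization; so the real content is the combinatorial verification that the distance conditions \eqref{tanaka1}--\eqref{tanaka3} can be met. Throughout, write $s=x_0=y_0=z_0$ and $t=x_\alpha=y_\beta=z_\gamma$ for the two branch points, and recall that distances in a theta graph are computed by comparing the two ways around each pair of paths.

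First I would dispose of the base case $\alpha=\beta=2$. Here $\Theta(2,2,\gamma)$ contains $K_{2,2}=C_4$ on $\{s,t,x_1,y_1\}$ when $\gamma\ge 2$, and more to the point it contains an isometric copy of $K_{2,3}$ (take the two $x$/$y$ midpoints together with a suitable vertex of the third path, or directly the three internal-free "parallel edges" picture): by the remark after Corollary~\ref{cor:ktmm}, or by direct construction, $K_{2,3}$ is of non-QE class and the short paths guarantee the embedding is isometric. For $\alpha=2,\beta=3,\gamma$ even, I would write down an explicit quintuple: take $\{v_1,v_2\}$ to be the edge $\{s,x_1\}$ on the length-$2$ path and $\{v_3,v_4\}$ the edge at the far end of a branch whose parity (using $\gamma$ even) makes $d(v_1,v_3)=d(v_2,v_4)=d(v_1,v_4)-1=d(v_2,v_3)-1$ hold, and then $v_5=t$; the parity hypothesis on $\gamma$ is exactly what is needed to make the two "around" distances differ by one in the right direction. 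The case $\alpha=2,\beta\ge 4$ is similar but now one has enough room on both the $\beta$- and $\gamma$-branches to place the second edge $\{v_3,v_4\}$ and the anchor $v_5$ without parity obstruction; I expect the natural choice is $\{v_1,v_2\}=\{s,x_1\}$ again, $\{v_3,v_4\}$ a mid-path edge on the $\beta$-branch, and $v_5$ an interior vertex of the $\gamma$-branch chosen so that \eqref{tanaka3} holds.

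For $\alpha\ge 3$ the graph need no longer be bipartite (e.g. an odd cycle appears among two of the branches), so I would instead use the \emph{modified} Tanaka quintuple of Theorem~\ref{thetawithmodifiedtanaka} and the final sentence of Theorem~\ref{tan-cite}, which asserts that the presence of such a quintuple forces non-QE class regardless of bipartiteness. The construction should again be uniform: with $\alpha\ge 3$ there is always an interior edge on the $\alpha$-branch that is "far" from both branch points, and combined with an edge on the $\beta$-branch (or $\gamma$-branch) and an appropriate $v_5$ one gets the required distance pattern; the freedom coming from $\alpha\ge 3$ removes any parity issue. \textbf{The main obstacle} I anticipate is precisely the bookkeeping of geodesic distances in a theta graph — one must be careful that the chosen geodesics realizing $d(v_i,v_j)$ are the "short way around" and that Lemma~\ref{le:tanaka}'s disjointness conditions are not violated by accident — and, in the $\alpha=2,\beta=3,\gamma$ even case, verifying that the parity hypothesis is genuinely used (so that the complementary case $\gamma$ odd, covered by Theorem~\ref{thm:thetaqe}, really does behave differently). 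I would organize the write-up as: (1) recall distance formulas in $\Theta(\alpha,\beta,\gamma)$; (2) reduce all four cases to Theorems~\ref{th:thetawith5t} and~\ref{thetawithmodifiedtanaka} by checking the inequalities defining those characterizations; (3) conclude via Theorems~\ref{pro:tanaka} and~\ref{tan-cite}.
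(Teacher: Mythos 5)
Your overall strategy --- reduce every case to the existence of a (modified) Tanaka quintuple --- cannot cover all four bullets, and this is the main gap. Theorem~\ref{th:thetawith5t} is an exact characterization: for $\alpha=2$ a Tanaka quintuple exists only when $\beta$ and $\gamma$ are \emph{both even}, and the lemmas of Section~\ref{tgwt} show this is a genuine parity obstruction, not a matter of "having enough room." Likewise a modified Tanaka quintuple can only occur in a non-bipartite graph, and by Theorem~\ref{thetawithmodifiedtanaka} together with the accompanying lemmas, $\Theta(2,\beta,\gamma)$ with $\beta,\gamma$ odd admits neither kind of quintuple (the even path has length $2$, which is exactly the excluded case). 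So your third bullet, "$\alpha=2$, $\beta\ge4$, no parity obstruction," fails precisely when $\beta,\gamma\ge5$ are both odd. The paper closes this residual case by a completely different device, Lemma~\ref{2oddodd}: an explicit test function $f$ supported on seven vertices with $\sum_x f(x)=0$ and $\sum_{x,y} d(x,y)f(x)f(y)=2(4k+4l-13)>0$. Without some such explicit witness (or another argument) your proof is incomplete.

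A second, more local error: disposing of $\alpha=\beta=2$ via an isometric copy of $K_{2,3}$ does not work for $\gamma\ge3$. In $K_{2,3}$ the two degree-$3$ vertices have three common neighbours, whereas in $\Theta(2,2,\gamma)$ the only degree-$3$ vertices are the branch points $x_0$ and $x_\alpha$, whose common neighbours are just $x_1$ and $y_1$ unless $\gamma=2$; so $K_{2,3}$ is not even a subgraph, let alone an isometric one. That case is in fact reachable by the quintuple machinery (a plain Tanaka quintuple when $\gamma$ is even, a modified one when $\gamma$ is odd), which is how the paper proceeds. Your remaining cases ($\alpha=2,\beta=3,\gamma$ even and $\alpha\ge3$) do follow the paper's route, with the caveat that for bipartite instances with $\alpha\ge3$ (all parameters of equal parity) you must use the plain rather than the modified quintuple, since a modified quintuple forces an odd cycle.
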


Note that for each $n\ge 5$ we can choose a triple $(\alpha,\beta,\gamma)$ such that $\alpha+\beta+\gamma-1=n$ and $\Theta(\alpha,\beta,\gamma)$ is of non-QE class.

\begin{corollary}\label{nonqeforeachn}
    For each $n\ge 5$ there exists a graph $\Theta(\alpha,\beta,\gamma)$ on $n$ vertices and of non-QE class. 
\end{corollary}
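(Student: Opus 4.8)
The plan is to exhibit, for each $n\ge 5$, an explicit triple $(\alpha,\beta,\gamma)$ with $\alpha+\beta+\gamma-1=n$ that falls under one of the four bullets of Theorem~\ref{thm:thetanonqe}. Since $\Theta(\alpha,\beta,\gamma)$ has $\alpha+\beta+\gamma-1$ vertices, we need $\alpha+\beta+\gamma=n+1$, and we must respect the standing assumptions $1\le\alpha\le\beta\le\gamma$, $\beta\ge2$, and at most one of $\alpha,\beta,\gamma$ equal to $1$.

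The natural choice is to keep $\alpha=2$ and $\beta=2$, so that we land in the first bullet ($\alpha=\beta=2$), and set $\gamma=n-3$. This requires $\gamma\ge\beta=2$, i.e. $n\ge5$, which is exactly our range; one also checks $\alpha\le\beta\le\gamma$ holds and no degeneracy occurs. Thus $\Theta(2,2,n-3)$ is a graph on $n$ vertices of non-QE class by Theorem~\ref{thm:thetanonqe}. That is the whole argument — it is essentially a one-line verification once Theorem~\ref{thm:thetanonqe} is granted, so there is no real obstacle here; the substance lies entirely in the earlier theorem.

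Concretely, I would write: fix $n\ge5$ and put $\alpha=\beta=2$, $\gamma=n-3\ge2$. Then $(\alpha,\beta,\gamma)$ satisfies $1\le\alpha\le\beta\le\gamma$ with $\beta\ge2$ and no degenerate case (none of them equals $1$), so $\Theta(2,2,n-3)$ is a well-defined theta graph. It has $\alpha+\beta+\gamma-1=n$ vertices, and by the first bullet of Theorem~\ref{thm:thetanonqe} (the case $\alpha=\beta=2$) it is of non-QE class. This proves Corollary~\ref{nonqeforeachn}. (One could equally use $\Theta(2,3,\gamma)$ for appropriate parities via the second or third bullet, but the $\Theta(2,2,\cdot)$ family is the cleanest since it covers every $n\ge5$ uniformly.)
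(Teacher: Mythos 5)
Your proof is correct and matches the paper's (implicit) argument: the paper simply notes before the corollary that for each $n\ge 5$ one can choose a triple with $\alpha+\beta+\gamma-1=n$ falling under Theorem~\ref{thm:thetanonqe}, and your explicit choice $\Theta(2,2,n-3)$ is a valid instance of the first bullet for all $n\ge5$ (including $n=5$, where $\Theta(2,2,2)=K_{2,3}$). Nothing further is needed.
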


These results, as well as some particular examples, lead us to the following conjecture:

\begin{conjecture}\label{conj:thetaqe}
Assume that $1\le\alpha\le\beta\le\gamma$, $\beta\ge2$.
Then $\Theta(\alpha,\beta,\gamma)$ is of QE class if and only if either
\begin{itemize}
\item $\alpha=1$, or
\item $\alpha=2$, $\beta=3$ and $\gamma$ is odd.
\end{itemize}
\end{conjecture}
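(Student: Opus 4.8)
\textbf{Proof proposal for Conjecture~\ref{conj:thetaqe}.}
The plan is to show that Theorems~\ref{thm:thetaqe} and~\ref{thm:thetanonqe}, once both are established, already cover \emph{every} triple $(\alpha,\beta,\gamma)$ with $1\le\alpha\le\beta\le\gamma$ and $\beta\ge2$, so that the conjecture would become a theorem rather than a conjecture. First I would isolate the ``QE'' side: if $\alpha=1$, then $\Theta(1,\beta,\gamma)$ with $2\le\beta\le\gamma$ is of QE class by Theorem~\ref{thm:thetaqe}, \emph{provided} one also settles the case $\beta$ odd and $\gamma$ even and $\beta\ge 5$, and the case $\beta\ge 4$ even --- i.e.\ the cases not currently listed in Theorem~\ref{thm:thetaqe}. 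Likewise if $\alpha=2$ and $\beta=3$ with $\gamma$ odd, QE-ness is the content of the $\beta=3$ clause of Theorem~\ref{thm:thetaqe} combined with the observation that $\Theta(2,3,\gamma)$ is obtained from $\Theta(1,3,\gamma)$ by subdividing; but one must check that passing from $\Theta(1,3,\gamma)$ to $\Theta(2,3,\gamma)$ preserves QE class for $\gamma$ odd, which is \emph{not} automatic and is presumably why the statement is still a conjecture.

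For the ``non-QE'' side the situation is cleaner: Theorem~\ref{thm:thetanonqe} already handles $\alpha=\beta=2$ (all $\gamma$), $\alpha=2,\beta=3,\gamma$ even, $\alpha=2,\beta\ge4$ (all $\gamma$), and $\alpha\ge3$ (all $\beta\le\gamma$). So the complement of the conjectured QE set is entirely inside the non-QE set of Theorem~\ref{thm:thetanonqe}, with the single exception of $\alpha=1,\beta$ odd$,\gamma$ even (and $\beta$ large), which is neither in Theorem~\ref{thm:thetaqe} nor in Theorem~\ref{thm:thetanonqe}. Thus the proof of the conjecture reduces to exactly two missing facts: (1) $\Theta(1,\beta,\gamma)$ is of QE class for \emph{all} $2\le\beta\le\gamma$ (not merely the three cases of Theorem~\ref{thm:thetaqe}), and (2) $\Theta(2,3,\gamma)$ is of QE class for all odd $\gamma$.

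I would attack (1) by the method already used in Section~\ref{sec:qeobg}: write down the distance matrix $D$ of $\Theta(1,\beta,\gamma)$ explicitly (the two endpoints are at distance $1$, and distances along the two long paths are read off directly, with the short edge giving shortcuts), then compute $\QEC$ via Lagrange multipliers as in the proofs of Theorems~\ref{th:Ktmn} and~\ref{th:K22n}, exploiting the symmetry $x_j\leftrightarrow z_j$ when $\beta=\gamma$ and, more generally, the reflection symmetry of each path about its midpoint. The goal is to show the largest critical value of the Lagrangian parameter yields $\QEC\le 0$. For (2), I would use Theorem~\ref{th:thetawith5t} (the characterization of theta graphs admitting a Tanaka quintuple): show that $\Theta(2,3,\gamma)$ with $\gamma$ odd admits \emph{no} Tanaka quintuple and no modified Tanaka quintuple, then invoke the bipartite part of Theorem~\ref{pro:tanaka} (note $\Theta(2,3,\gamma)$ is bipartite iff $2+3+\gamma$ is even, i.e.\ $\gamma$ odd --- which is precisely the parity in the conjecture, a reassuring sign). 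Actually, combining this with Theorem~\ref{tan-cite}(iv), absence of a Tanaka quintuple in a bipartite graph already gives QE class.

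\textbf{Main obstacle.} The hard part will be case (1) in full generality --- in particular $\Theta(1,\beta,\gamma)$ with $\beta$ even and $\gamma$ arbitrary --- because the ``nice'' closed forms in Theorem~\ref{thm:thetaqe} seem to rely on parity coincidences (odd $\beta,\gamma$ make certain cosine terms align), and the cubic/quartic resolvent governing $\lambda_{\max}$ may not factor. One may need a non-variational argument: exhibit an explicit isometric embedding of $\Theta(1,\beta,\gamma)$ into a hypercube (equivalently, verify Djokovi\'c's convexity condition from Theorem~\ref{tan-cite}(ii) directly on the theta graph), which would bypass the eigenvalue computation entirely and simultaneously be the cleanest route to both (1) and (2). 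Whether that convexity condition actually holds for all $\Theta(1,\beta,\gamma)$ is the crux; if it fails for some even $\beta$, the conjecture as stated is false and the variational estimate is forced. I would therefore begin by testing Djokovi\'c convexity on small cases $\Theta(1,4,4)$, $\Theta(1,4,5)$, $\Theta(1,4,6)$ before committing to either strategy.
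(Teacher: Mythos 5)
Your reduction is exactly the one the paper itself records immediately after the conjecture: the non-QE direction is fully covered by Theorem~\ref{thm:thetanonqe}, and what remains open is (1) QE-ness of $\Theta(1,\beta,\gamma)$ for $4\le\beta\le\gamma$ with $\beta$ or $\gamma$ even, and (2) QE-ness of $\Theta(2,3,\gamma)$ for $\gamma$ odd. The paper offers no proof of these either --- the statement is genuinely a conjecture --- so up to that point you and the authors are in agreement.

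However, both of the strategies you propose for closing the gaps would fail, and for the same reason: the remaining cases are precisely the non-bipartite ones. Your parity bookkeeping is wrong. By Lemma~\ref{le:thetabipartite}, $\Theta(\alpha,\beta,\gamma)$ is bipartite if and only if $\alpha,\beta,\gamma$ all have the \emph{same parity}, not when $\alpha+\beta+\gamma$ is even: $\Theta(2,3,\gamma)$ contains the odd cycle of length $\alpha+\beta=5$ and is therefore never bipartite, for any $\gamma$. Hence the bipartite half of Theorem~\ref{pro:tanaka} (absence of a Tanaka quintuple implies QE) is unavailable for case (2); absence of Tanaka and modified Tanaka quintuples proves nothing positive for a non-bipartite graph. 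Likewise, $\Theta(1,\beta,\gamma)$ with $\beta$ even contains the odd cycle of length $1+\beta$, so it cannot be isometrically embedded in a hypercube and Djokovi\'c convexity (Theorem~\ref{tan-cite}(ii)) \emph{must} fail --- yet the graph can still be of QE class, since QE only requires $[0,1]\subseteq\pi(G)$ rather than $\pi(G)=[-1,1]$. So your ``cleanest route'' for case (1) is a dead end on exactly the subcases that are not already settled by Theorem~\ref{thm:thetaqe}. What would actually be needed is a direct positive-semidefiniteness argument of the kind used for $\Theta(1,2,\gamma)$ in the proof of Theorem~\ref{thm:thetaqe} (the decomposition of $A_n$ into interval matrices $\mathbf{J}^n(p,q)$), or the Lagrange-multiplier computation you mention; the hypercube and Tanaka-quintuple machinery cannot reach these graphs.
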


Once Theorems~\ref{thm:thetaqe} and \ref{thm:thetanonqe} are proved,
to complete the proof of this conjecture one needs to prove $QE$ property
of $\Theta(\alpha,\beta,\gamma)$ for the case when
\begin{itemize}
\item $\alpha=1$, $4\le\beta\le\gamma$ and $\beta$ or $\gamma$ is even, and
\item $\alpha=2$, $3=\beta\le\gamma$ and $\gamma$ is odd.
\end{itemize}

The proofs of both theorems, postponed to Section~\ref{se:proofs}, are largely based on Tanaka's theorem.
Therefore first we need to study these theta graphs which admit Tanaka quintuple, as well as modified
Tanaka quintuple, defined in Section~\ref{sec:tgwmtq}.
We start with a simple observation.

\begin{lemma}\label{le:thetabipartite}
The graph $\Theta(\alpha,\beta,\gamma)$ is bipartite if and only if $\alpha,\beta,\gamma$ are of the same parity.
\end{lemma}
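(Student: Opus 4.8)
The plan is to prove both implications of Lemma~\ref{le:thetabipartite} directly, using the fact that a connected graph is bipartite if and only if it contains no cycle of odd length, together with the simple structure of $\Theta(\alpha,\beta,\gamma)$, which has exactly three internally disjoint paths $P,Q,R$ of lengths $\alpha,\beta,\gamma$ between the two branch vertices $s:=x_0$ and $t:=x_\alpha$. Every cycle in $\Theta(\alpha,\beta,\gamma)$ is obtained by concatenating two of these three paths, so the set of cycle lengths is exactly $\{\alpha+\beta,\ \beta+\gamma,\ \alpha+\gamma\}$.

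For the ``if'' direction, suppose $\alpha,\beta,\gamma$ all have the same parity. Then each of $\alpha+\beta$, $\beta+\gamma$, $\alpha+\gamma$ is even, so every cycle has even length, and hence the graph is bipartite. To make the bipartition explicit (which is cleaner than invoking the cycle characterization) I would simply assign to each vertex $v$ the colour $d(s,v) \bmod 2$: since any edge $\{u,v\}$ in a path from $s$ to $t$ joins vertices whose distances from $s$ within that path differ by exactly $1$, and one checks — using the common parity of $\alpha,\beta,\gamma$ — that $d(s,v)$ equals the position of $v$ along whichever path contains it (the competing route through the other two paths is never shorter in the relevant parity class), adjacent vertices receive opposite colours.

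For the ``only if'' direction, suppose not all of $\alpha,\beta,\gamma$ have the same parity; then two of them, say $\alpha$ and $\beta$, have opposite parity (this uses that among three integers not all of the same parity, some two differ in parity). Then the cycle formed by $P$ followed by $Q$ has length $\alpha+\beta$, which is odd, so $\Theta(\alpha,\beta,\gamma)$ contains an odd cycle and therefore is not bipartite.

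I do not expect a genuine obstacle here; the only point requiring a little care is the claim that in the ``if'' direction the distance $d(s,v)$ for a vertex $v$ on, say, the $\alpha$-path at position $j$ is exactly $j$ rather than $(\alpha-j) + \min(\beta,\gamma)$ or similar — but this is immediate because the alternative route has length at least $(\alpha - j) + \min(\beta,\gamma) \ge (\alpha-j)$, and in any case the parity-based colouring argument only needs that the two endpoints of each edge get different colours, which follows from the position function along each path being a proper $2$-colouring once the parities of $\alpha,\beta,\gamma$ agree at the shared endpoint $t$. Alternatively one can bypass distances entirely and argue purely via the odd-cycle characterization, which is what I would write up as the primary proof for brevity.
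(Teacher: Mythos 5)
Your proof is correct. The ``only if'' direction is identical to the paper's: two of $\alpha,\beta,\gamma$ of opposite parity give an odd cycle of length $\alpha+\beta$ (say), so the graph is not bipartite. For the ``if'' direction you primarily invoke the characterization of bipartite graphs as those with no odd cycle, together with the observation that the only cycles of $\Theta(\alpha,\beta,\gamma)$ are the three unions of pairs of the defining paths, with lengths $\alpha+\beta$, $\beta+\gamma$, $\alpha+\gamma$, all even under the parity hypothesis. The paper instead exhibits the bipartition explicitly: $V_e$ (resp.\ $V_o$) consists of the vertices $x_i,y_j,z_k$ with even (resp.\ odd) index, the common parity of $\alpha,\beta,\gamma$ being exactly what makes this assignment consistent at the shared endpoint $x_\alpha=y_\beta=z_\gamma$. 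Both routes are elementary; the paper's has the small advantage of not requiring the (easy but still separate) verification that the three listed cycles are \emph{all} the cycles of the graph, while yours avoids any case analysis at the branch vertices. One caution on your alternative colouring by $d(s,v)\bmod 2$: the assertion that $d(s,v)$ \emph{equals} the position of $v$ along its path is false in general (the route through $t$ can be strictly shorter); only the parities agree, and that is all the colouring needs --- the paper sidesteps this entirely by colouring with the index rather than the distance.
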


\begin{proof}
Assume that $\alpha,\beta,\gamma$ have the same parity. We define $V_e$ (resp. $V_o$)
to be the set of all $x_i,y_j,z_k$ with $i,j,k$ even (resp. odd). By assumption, these sets are well defined,
disjoint, $V_e\cup V_o=V$ and no edge connects vertices either in $V_e$ or in $V_o$.

On the other hand, if for example, $\alpha$ and $\beta$ have different parity then $\Theta(\alpha,\beta,\gamma)$
contains a cycle of odd degree $\alpha+\beta$, consequently $\Theta(\alpha,\beta,\gamma)$ is not bipartite.
\end{proof}

Combining Theorems \ref{thm:thetaqe}, \ref{thm:thetaqe} and Lemma \ref{le:thetabipartite} we get the following corollary:
\begin{corollary}
Assume that $1\le\alpha\le\beta\le\gamma$, $\beta\ge2$. Then the graph $\Theta(\alpha,\beta,\gamma)$ is bipartite and of QE class if and only if $\alpha=1$ and $\beta, \gamma$ are odd.
\end{corollary}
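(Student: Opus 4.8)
The plan is to derive this corollary with no new computation at all: it is a bookkeeping combination of the parity criterion for bipartiteness (Lemma~\ref{le:thetabipartite}) with the $QE$/non-$QE$ dichotomy already packaged in Theorems~\ref{thm:thetaqe} and~\ref{thm:thetanonqe}. So I would split into the two implications and simply check that the hypotheses of the relevant earlier results are met.

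For the ``if'' direction I would assume $\alpha=1$ and $\beta,\gamma$ odd. Since $\alpha=1$ is itself odd, all three of $\alpha,\beta,\gamma$ share parity, so $\Theta(1,\beta,\gamma)$ is bipartite by Lemma~\ref{le:thetabipartite}. Moreover the standing hypothesis $\beta\ge2$ together with $\beta$ odd gives $\beta\ge3$, and $2\le\beta\le\gamma$ with both $\beta,\gamma$ odd is exactly the third bullet of Theorem~\ref{thm:thetaqe}, which yields that $\Theta(1,\beta,\gamma)$ is of $QE$ class. That disposes of one direction.

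For the ``only if'' direction I would assume $\Theta(\alpha,\beta,\gamma)$ is bipartite and of $QE$ class. Bipartiteness forces $\alpha,\beta,\gamma$ to have a common parity by Lemma~\ref{le:thetabipartite}, so it remains to exclude $\alpha\ge2$. If $\alpha\ge3$, the fourth bullet of Theorem~\ref{thm:thetanonqe} makes $\Theta(\alpha,\beta,\gamma)$ non-$QE$, contradicting the assumption. If $\alpha=2$, the common-parity condition forces $\beta$ to be even with $\beta\ge2$, hence $\beta=2$ or $\beta\ge4$; the first case falls under the ``$\alpha=\beta=2$'' bullet of Theorem~\ref{thm:thetanonqe} and the second under the ``$\alpha=2$, $\beta\ge4$'' bullet, so in either case the graph is non-$QE$, again a contradiction. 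Hence $\alpha=1$, and then the common parity of $\alpha,\beta,\gamma$ forces $\beta$ and $\gamma$ to be odd, which is the claim.

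The only thing requiring genuine attention — and thus the sole possible obstacle — is making sure the case split in the ``only if'' direction is exhaustive: one must observe that, under $\alpha=2$ and equal parity, $\beta$ cannot be odd, so the middle bullet of Theorem~\ref{thm:thetanonqe} (``$\alpha=2$, $\beta=3$, $\gamma$ even'') is simply never invoked, while the two bullets that are invoked together cover all admissible $\beta$. Everything else is immediate, so I would keep the written proof to a few lines.
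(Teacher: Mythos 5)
Your proposal is correct and matches the paper's intent exactly: the paper offers no written proof, simply stating that the corollary follows by "combining" Lemma~\ref{le:thetabipartite} with Theorems~\ref{thm:thetaqe} and~\ref{thm:thetanonqe}, which is precisely the bookkeeping you carry out (including the observation that under $\alpha=2$ and common parity the ``$\beta=3$, $\gamma$ even'' bullet never arises). Your write-up is, if anything, more careful than the paper's one-line justification.
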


\section{Theta graphs with Tanaka quintuple}\label{tgwt}

In this section we will characterize these theta graphs which admit Tanaka quintuple.

\begin{lemma}\label{le:theta}
Assume that  $(v_1,v_2,v_3,v_4,v_5)$ is a Tanaka quintuple in $\Theta(\alpha,\beta,\gamma)$,
with $v_5=z_k $ for some $0\le k\le\gamma$, i.e, $v_5$ is a vertex of $P_{\gamma+1}$, cf. Definition~\ref{Deftheta} for notation. Then
\begin{itemize}
\item[i)] $\alpha+\beta$ is even,
\item[ii)] $0<k<\gamma$,
\item[iii)] $v_1,v_2,v_3,v_4$ are vertices of $P_{\alpha+1}\cup P_{\beta+1}$,
but they can not be all vertices of $P_{\alpha+1}$ nor of $P_{\beta+1}$.
\end{itemize}
Moreover, if $v_1=x_{i}, v_2=x_{i+1}, v_3=y_j,v_4=y_{j+1}$ then
\begin{align}
2i+2j+2&=\alpha+\beta,\label{eq:1lemma}\\
2i+1&<\alpha+\gamma,\label{eq:2lemma}\\
2j+1&<\beta+\gamma.\label{eq:3lemma}
\end{align}
\end{lemma}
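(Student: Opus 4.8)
The plan is to extract concrete structural information about the vertices of a Tanaka quintuple living in a theta graph, using Lemma~\ref{le:tanaka} together with the bipartiteness criterion Lemma~\ref{le:thetabipartite} and the explicit distance formulas in $\Theta(\alpha,\beta,\gamma)$. First I would observe that by Tanaka's theorem (Theorem~\ref{pro:tanaka}) the existence of a Tanaka quintuple forces $G$ to contain an isometrically embedded non-$QE$ subgraph, but more usefully, a Tanaka quintuple is stable under restriction to any isometrically embedded subgraph containing its vertices. A theta graph $\Theta(\alpha,\beta,\gamma)$ with $v_5$ on $P_{\gamma+1}$ can be thought of as built from the cycle $C=P_{\alpha+1}\cup P_{\beta+1}$ (of length $\alpha+\beta$) with a pendant path attached at both of its poles $x_0$ and $x_\alpha$. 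The key to part~(i) is that a Tanaka quintuple requires, via \eqref{tanaka1} and \eqref{tanaka2}, two disjoint edges $\{v_1,v_2\},\{v_3,v_4\}$ with geodesics between them that are themselves disjoint (Lemma~\ref{le:tanaka}); combining \eqref{tanaka2} with \eqref{tanaka3} and the parity of distances in the graph, one sees that $v_1,v_2$ must be on opposite sides of $v_5$ in a suitable sense and that the edge $\{v_1,v_2\}$ together with the edge $\{v_3,v_4\}$ straddle the two halves of the cycle — this only happens if the cycle has even length, i.e. $\alpha+\beta$ even.

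Next, for part~(ii): if $k=0$ or $k=\gamma$ then $v_5$ coincides with one of the two poles, say $v_5=x_0=y_0=z_0$; then the condition $d(v_5,v_2)=d(v_5,v_1)+1$, $d(v_5,v_3)=d(v_5,v_4)+1$ together with $\{v_1,v_2\},\{v_3,v_4\}\in E$ says that along any geodesic from $v_5$, we pass $v_1$ then $v_2$, and $v_4$ then $v_3$. I would then derive a contradiction with \eqref{tanaka2} by computing $d(v_1,v_4)$ and $d(v_2,v_3)$ through $v_5$ and using that $v_5$ is a pole (a cut-like vertex of the cycle), so that geodesics from $v_5$ are essentially unique along each arc. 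For part~(iii), Lemma~\ref{le:tanaka} gives that the two geodesics $v_1\rightsquigarrow v_3$ and $v_2\rightsquigarrow v_4$ are disjoint and avoid $v_5$, and the geodesics $v_5\rightsquigarrow v_1$, $v_5\rightsquigarrow v_4$ avoid $v_4$ and $v_1$ respectively; since $v_5$ lies strictly inside $P_{\gamma+1}$, any geodesic leaving $v_5$ and not returning to the $z$-path must exit through one of the poles and then run along $P_{\alpha+1}$ or $P_{\beta+1}$. A short case analysis shows $v_1,v_2,v_3,v_4$ all lie on $P_{\alpha+1}\cup P_{\beta+1}$, and they cannot all lie on a single one of these two paths, because then the two disjoint edges and their disjoint connecting geodesics would force a length-$4$ path inside a single $P$, contradicting either \eqref{tanaka2} or the disjointness from $v_5$'s geodesics (as both poles would have to be used).

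For the "moreover" part, once we know $v_1=x_i$, $v_2=x_{i+1}$ on $P_{\alpha+1}$ and $v_3=y_j$, $v_4=y_{j+1}$ on $P_{\beta+1}$ (after possibly relabeling so that the indices increase away from $x_0$), I would write out the four distances in \eqref{tanaka2} explicitly. In a cycle of length $\alpha+\beta$, $d(x_i,y_j)=\min(i+j,\ \alpha+\beta-i-j)$ and similarly for the others; condition \eqref{tanaka2}, namely $d(v_1,v_3)=d(v_2,v_4)=d(v_1,v_4)-1=d(v_2,v_3)-1$, together with the requirement that these are genuine cycle-distances (not shortcuts through the $z$-path, which we handle separately), forces the "balanced" configuration $i+j+1=\alpha+\beta-(i+1)-(j+1)+1$, i.e. \eqref{eq:1lemma}: $2i+2j+2=\alpha+\beta$. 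Then \eqref{tanaka3} says $v_5=z_k$ is strictly closer to $v_1=x_i$ than to $v_2=x_{i+1}$; the distance from $z_k$ to $x_i$ is $\min(k+i,\ \gamma-k+\alpha-i)$, and for this to be realised and to be strictly less than $d(z_k,x_{i+1})$ one needs $k+i<\gamma-k+\alpha-i$, but combined with $0<k<\gamma$ the sharp inequality reduces to $2i+1<\alpha+\gamma$, which is \eqref{eq:2lemma}; symmetrically, using the $z_k$-to-$y$ side and \eqref{tanaka3}'s second equation gives \eqref{eq:3lemma}, $2j+1<\beta+\gamma$. I expect the main obstacle to be bookkeeping in the case analysis of part~(iii) — in particular ruling out configurations where a connecting geodesic passes through a pole and dips into the $z$-path, which is where the avoidance statements of Lemma~\ref{le:tanaka} and the strict interiority $0<k<\gamma$ from part~(ii) must be invoked carefully to keep all the $\min$'s in the distance formulas pinned down to the correct branch.
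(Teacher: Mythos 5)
Your toolbox is the right one (the disjointness/avoidance statements of Lemma~\ref{le:tanaka} plus explicit distance formulas), and your ``moreover'' part is essentially sound: you obtain \eqref{eq:1lemma} the same way the paper does, and you derive \eqref{eq:2lemma}--\eqref{eq:3lemma} from \eqref{tanaka3} and the formula $d(z_k,x_i)=\min(k+i,\gamma-k+\alpha-i)$, whereas the paper gets them from \eqref{tanaka2} by comparing $d(v_2,v_3)=i+j+1$ with the walk from $v_2$ to $v_3$ through $P_{\gamma+1}$; both routes work. The same goes for the second half of (iii): your appeal to the avoidance clauses of Lemma~\ref{le:tanaka} is the right mechanism, even if the phrase ``length-$4$ path inside a single $P$'' does not describe the actual contradiction (the paper's version: if all four lie on $P_{\alpha+1}$, a geodesic from $v_5$ to $v_4$, resp.\ $v_1$, must enter $P_{\alpha+1}$ through a pole and hence pass through $v_1$ or $v_3$, resp.\ $v_2$ or $v_4$, contradicting \eqref{tanaka3} or Lemma~\ref{le:tanaka}).

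The genuine gap is in (i) and (ii). You never draw the one conclusion that makes these immediate: by Lemma~\ref{le:tanaka} the two geodesics $v_1\rightsquigarrow v_3$ and $v_2\rightsquigarrow v_4$ are disjoint, so together with the edges $\{v_1,v_2\}$ and $\{v_3,v_4\}$ they form a \emph{cycle} $\mathcal{C}$ of length $2r+2$ that does not contain $v_5$. In a theta graph every cycle is the union of two of the three defining paths and therefore contains both poles; since $v_5=z_k$ avoids $\mathcal{C}$, it is not a pole (giving (ii)) and $\mathcal{C}$ cannot use $P_{\gamma+1}$, so $\mathcal{C}=P_{\alpha+1}\cup P_{\beta+1}$ and $\alpha+\beta=2r+2$ is even (giving (i) and the first half of (iii) at once). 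Your substitute arguments do not close these cases: for (i), ``straddling the two halves of the cycle \dots\ only happens if the cycle has even length'' is an assertion, not a derivation; for (ii), computing $d(v_1,v_4)$ and $d(v_2,v_3)$ ``through $v_5$'' presupposes that geodesics between $v_1$ and $v_4$ pass through the pole $v_5$, which is not given (a pole is not a cut vertex of $\Theta(\alpha,\beta,\gamma)$), so no contradiction with \eqref{tanaka2} is actually produced. Inserting the cycle observation repairs both points and, incidentally, also pins down the branch of the $\min$ in your distance formulas for the ``moreover'' part.
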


\begin{proof}
Let $(v_1=a_0,a_1,\ldots,a_r=v_3)$, $(v_2=b_0,b_1,\ldots,b_r=v_4)$ be geodesics.
Then, by Lemma~\ref{le:tanaka}, $\mathcal{C}:=(a_0,a_1,\ldots,a_r,b_0,b_1,\ldots,b_r)$ is a cycle in $\Theta(\alpha,\beta,\gamma)$
and $v_5$ is not a vertex of $\mathcal{C}$. This implies that $\mathcal{C}=P_{\alpha+1}\cup P_{\beta+1}$,
$\alpha+\beta=2r+2$ and $0<k<\gamma$.

Assume that $v_1,v_2,v_3,v_4$ are vertices of $P_{\alpha+1}$.
If $v_1=x_i$, $v_2=x_{i+1}$, $v_4=x_j$, $v_3=x_{j+1}$ then a geodesics from $v_5$ to $v_4$ must pass either through $v_1$
or through $v_3$, which is impossible by (\ref{tanaka3}) and the second part of Lemma~\ref{le:tanaka}.
Similarly, if $v_2=x_i$, $v_1=x_{i+1}$, $v_3=x_j$, $v_4=x_{j+1}$, then a geodesics from $v_5$ to $v_1$ must pass either through $v_2$
or through $v_4$, which, again, is impossible. This proves~(iii).

Now, assume that $v_1=x_{i}, v_2=x_{i+1}, v_3=y_j,v_4=y_{j+1}$. Then $(x_i,\ldots,x_0,y_1,\ldots,y_j)$ is a geodesic from $v_1$
to $v_3$, consequently $d(v_1,v_3)=i+j$. Similarly, $d(v_2,v_4)=\alpha-i+\beta-j-2$, which leads to (\ref{eq:1lemma}).
Inequality (\ref{eq:2lemma}) means that the walk
from $v_2$ to $v_3$ through $P_{\gamma+1}$:
\[
\left(v_2=x_{i+1},x_{i+2},\ldots,x_{\alpha}=z_{\gamma},z_{\gamma-1},\ldots,z_0=y_0,y_{1},\ldots,y_{j}=v_3\right),
\]
is longer that $i+j$, by~(\ref{tanaka2}). Similarly for~(\ref{eq:3lemma}).
\end{proof}

\begin{lemma}\label{le:tanaka2}
If $\Theta(\alpha,\beta,\gamma)$ admits a Tanaka quintuple then $\alpha,\beta,\gamma\ge2$.
\end{lemma}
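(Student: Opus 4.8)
The plan is to argue by contradiction: suppose $\Theta(\alpha,\beta,\gamma)$ admits a Tanaka quintuple $(v_1,v_2,v_3,v_4,v_5)$ but one of $\alpha,\beta,\gamma$ equals $1$. By Definition~\ref{Deftheta} at most one of the three parameters is $1$, so without loss of generality we may assume $\gamma=1$ and $\alpha,\beta\ge2$; in this case $x_0=z_0$ and $x_\alpha=z_\gamma=z_1$ are joined by the extra edge $\{z_0,z_1\}=\{x_0,x_\alpha\}$. The key structural fact to exploit is the one already isolated in Lemma~\ref{le:theta}: the vertex $v_5$ lies off a cycle $\mathcal C$ formed by two disjoint geodesics $v_1\to v_3$ and $v_2\to v_3$ (after relabelling), and $\mathcal C$ must be the union of two of the three paths.

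First I would dispose of the case where $v_5$ lies on the short path $P_{\gamma+1}=P_2$, i.e.\ $v_5\in\{x_0,x_\alpha\}$. Applying Lemma~\ref{le:theta} with the role of $P_{\gamma+1}$ played by this length-one path, part (ii) would force $0<k<\gamma=1$, which is impossible. Hence $v_5$ cannot be $x_0$ or $x_\alpha$, so $v_5$ is an interior vertex of $P_{\alpha+1}$ or of $P_{\beta+1}$. Next, by Lemma~\ref{le:tanaka}, the geodesics realizing $d(v_1,v_3)$ and $d(v_2,v_4)$ are disjoint and avoid $v_5$; their union is a cycle $\mathcal C$ in $\Theta(\alpha,\beta,\gamma)$ not containing $v_5$. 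The cycles in $\Theta(\alpha,\beta,\gamma)$ are exactly $P_{\alpha+1}\cup P_{\beta+1}$, $P_{\alpha+1}\cup P_{\gamma+1}$, and $P_{\beta+1}\cup P_{\gamma+1}$, of lengths $\alpha+\beta$, $\alpha+1$, $\beta+1$ respectively; since $v_5$ is interior to $P_{\alpha+1}$ or $P_{\beta+1}$, the cycle $\mathcal C$ must be one that avoids that path, and in each subcase $\mathcal C$ involves the path $P_{\gamma+1}$ of length $1$.

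Now comes the heart of the argument. The cycle $\mathcal C$ has even length $2r+2$ (from $\alpha+\beta=2r+2$ in the analogue of Lemma~\ref{le:theta}), but a cycle using $P_{\gamma+1}=P_2$ has length $\alpha+1$ or $\beta+1$; combined with $d(v_1,v_3)=d(v_2,v_4)=r$ and the edges $\{v_1,v_2\},\{v_3,v_4\}$, I would show the two geodesics forming $\mathcal C$ cannot both have length $r\ge1$ while their union is a cycle containing the single edge $\{x_0,x_\alpha\}$ of $P_2$: that edge would have to be one of the edges $\{a_i,a_{i+1}\}$ or $\{b_i,b_{i+1}\}$, say $\{a_i,a_{i+1}\}=\{x_0,x_\alpha\}$, and then the geodesic $a_0,\dots,a_r$ from $v_1$ to $v_3$ of length $r$ would traverse the long way around, forcing $r\ge\alpha\ge2$ on one side; chasing the constraints \eqref{tanaka2}, which say $d(v_1,v_4)=d(v_2,v_3)=r+1$, against the geometry of $\Theta(\alpha,\beta,\gamma)$ with $\gamma=1$ then yields a contradiction, because going from $v_2$ to $v_3$ the only available detour is through the single edge $\{x_0,x_\alpha\}$, which changes the distance by more than $1$ unless $\alpha=1$. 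The main obstacle is precisely this last case analysis: one must carefully enumerate where the edge $\{v_1,v_2\}$ and the edge $\{z_0,z_1\}$ sit relative to the paths $P_{\alpha+1}$ and $P_{\beta+1}$, and verify in each placement that the Tanaka conditions \eqref{tanaka1}--\eqref{tanaka3} become inconsistent. Since by symmetry the same contradiction arises if instead $\alpha=1$ or $\beta=1$, we conclude $\alpha,\beta,\gamma\ge2$.
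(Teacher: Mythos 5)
Your reduction and your first case are fine: at most one parameter can equal $1$, so you may assume $\gamma=1$, and if $v_5$ lies on $P_{\gamma+1}$ then part (ii) of Lemma~\ref{le:theta} (which forces $v_5$ to be an \emph{interior} vertex of the path carrying it) already gives a contradiction. The problem is the remaining case, which you yourself flag as ``the heart of the argument'' and ``the main obstacle'': there you only promise a case analysis (``chasing the constraints \dots\ against the geometry \dots\ then yields a contradiction'') without carrying it out, and the sketch you do give is partly confused --- the concluding clause ``which changes the distance by more than $1$ unless $\alpha=1$'' does not match your own normalization $\gamma=1$, $\alpha\ge2$, and the two geodesics are $v_1\to v_3$ and $v_2\to v_4$, not $v_2\to v_3$. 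As written, the proposal is a plan with an unexecuted central step, so it does not yet constitute a proof.

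The gap is avoidable, because Lemma~\ref{le:theta} already contains everything needed; this is exactly how the paper argues. Relabel the paths so that $v_5$ lies on $P_{\gamma+1}$ (every vertex lies on some path). Part (ii) gives $0<k<\gamma$, hence $\gamma\ge2$. Part (iii) says that $v_1,v_2,v_3,v_4$ all lie on $P_{\alpha+1}\cup P_{\beta+1}$ but cannot all lie on $P_{\alpha+1}$ nor all on $P_{\beta+1}$. If $\alpha=1$, then $P_{\alpha+1}$ contributes no vertices beyond the two shared endpoints, so the vertex set of $P_{\alpha+1}\cup P_{\beta+1}$ equals that of $P_{\beta+1}$ and all four vertices would lie on $P_{\beta+1}$, contradicting (iii). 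Hence $\alpha\ge2$, and symmetrically $\beta\ge2$. No analysis of geodesic lengths, of the position of the edge $\{z_0,z_1\}$, or of the parity of the cycle is required.
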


\begin{proof}
This is a consequence of part (ii) and (iii) of Lemma~\ref{le:theta}.
\end{proof}

\begin{lemma}\label{le:tanaka3}
If $\alpha,\beta\ge3$ are odd then $\Theta(\alpha,\beta,2)$ does not admit Tanaka quintuple.
\end{lemma}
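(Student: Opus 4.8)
The statement to prove is Lemma~\ref{le:tanaka3}: if $\alpha,\beta\ge 3$ are odd, then $\Theta(\alpha,\beta,2)$ admits no Tanaka quintuple. The plan is to argue by contradiction: suppose $(v_1,v_2,v_3,v_4,v_5)$ is a Tanaka quintuple in $\Theta(\alpha,\beta,2)$, and deduce an impossibility from the structural constraints of Lemma~\ref{le:theta}. The key point is that $\gamma=2$ is very restrictive: the third path $P_3=(z_0,z_1,z_2)$ has only one interior vertex $z_1$, whose distances to everything are easily computable.

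First I would locate the vertex $v_5$. By Lemma~\ref{le:theta}(iii), when $v_5$ lies on a given one of the three paths, the other four vertices lie on the union of the remaining two paths and are split between them (not all on one). Since $\alpha,\beta$ are odd and $\gamma=2$ is even, the only pair of paths with the same parity of length is $\{P_{\alpha+1},P_{\beta+1}\}$; Lemma~\ref{le:theta}(i) requires the sum of the two lengths ``not containing $v_5$'' to be even, so necessarily $v_5=z_k$ must be excluded (that forces $\alpha+\beta$ even, which is fine) --- wait, more carefully: if $v_5$ were on $P_{\alpha+1}$, then $v_1,\dots,v_4$ are on $P_{\beta+1}\cup P_3$ and $\beta+2$ would have to be even, impossible since $\beta$ is odd; similarly $v_5$ cannot be on $P_{\beta+1}$. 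Hence $v_5=z_k$ with $0<k<\gamma=2$, i.e.\ $v_5=z_1$, and $v_1,v_2,v_3,v_4$ are distributed on $P_{\alpha+1}$ and $P_{\beta+1}$, say $v_1=x_i,v_2=x_{i+1},v_3=y_j,v_4=y_{j+1}$ (up to relabeling the two paths and reversing orientations).

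Next I would invoke the numerical constraints \eqref{eq:1lemma}--\eqref{eq:3lemma} of Lemma~\ref{le:theta} with $\gamma=2$: they read $2i+2j+2=\alpha+\beta$, $2i+1<\alpha+2$, and $2j+1<\beta+2$, i.e.\ $2i\le \alpha$ and $2j\le\beta$; combined with $\alpha,\beta$ odd these give $2i\le\alpha-1$ and $2j\le\beta-1$, hence $2i+2j\le\alpha+\beta-2$, contradicting $2i+2j=\alpha+\beta-2$ only if the inequalities are not both tight --- so actually both must be tight: $2i=\alpha-1$ and $2j=\beta-1$. So $v_1=x_{(\alpha-1)/2}$, $v_2=x_{(\alpha+1)/2}$ are the two ``middle'' vertices of $P_{\alpha+1}$, and likewise $v_3=y_{(\beta-1)/2}$, $v_4=y_{(\beta+1)/2}$ on $P_{\beta+1}$. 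The remaining task is to check the Tanaka condition \eqref{tanaka3} for $v_5=z_1$: compute $d(z_1,x_i),d(z_1,x_{i+1}),d(z_1,y_j),d(z_1,y_{j+1})$ and show one of the two required ``$+1$'' relations fails. Using that $z_1$ is adjacent to $z_0=x_0=y_0$ and to $z_2=x_\alpha=y_\beta$, the distance from $z_1$ to $x_i$ is $\min(i+1,\alpha-i+1)$, and with $i=(\alpha-1)/2$ one gets $d(z_1,x_i)=(\alpha+1)/2=d(z_1,x_{i+1})$ --- the two distances are equal rather than differing by $1$, directly violating $d(v_5,v_2)=d(v_5,v_1)+1$. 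I expect the only subtlety (the ``hard part'', though it is mild) is being careful that the shortest path from $z_1$ to an interior $x_i$ really goes via $z_0$ or $z_2$ and not some other route --- but in a theta graph the only routes between two vertices go along the paths, so this is a short case check. This contradiction completes the proof.
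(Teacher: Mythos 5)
Your proposal is correct and follows essentially the same route as the paper's proof: use the structural constraints of Lemma~\ref{le:theta} (parity forces $v_5=z_1$, and equations \eqref{eq:1lemma}--\eqref{eq:3lemma} force $i=(\alpha-1)/2$, $j=(\beta-1)/2$), then observe that $d(z_1,v_1)=d(z_1,v_2)$ and $d(z_1,v_3)=d(z_1,v_4)$, contradicting \eqref{tanaka3}. The only cosmetic difference is that you spell out why the inequalities must both be tight, which the paper leaves implicit.
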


\begin{proof}
Assume, that $\alpha=2k+1$, $\beta=2l+1$, cf. Definition~\ref{Deftheta} for notation.
If $(v_1,v_2,v_3,v_4,v_5)$ is a Tanaka quintuple in $\Theta(\alpha,\beta,2)$ then, in view of Lemma~\ref{le:tanaka},
$v_5=z_1$ and we can assume that
\[
v_1=x_i,\quad v_2=x_{i+1},\quad v_3=y_j,\quad v_4=y_{j+1},
\]
$0\le i\le 2k$, $0\le j\le 2l$.
Then (\ref{eq:1lemma}), (\ref{eq:2lemma}), (\ref{eq:3lemma}) imply that $i=k$, $j=l$. But then
\[
d(v_1,v_5)=d(v_2,v_5)=k+1,\qquad
d(v_3,v_5)=d(v_4,v_5)=l+1,
\]
which contradicts (\ref{tanaka3}).
\end{proof}

\begin{lemma}
If $\alpha,\beta$ are even then $\Theta(\alpha,\beta,3)$ does not admit Tanaka quintuple.
\end{lemma}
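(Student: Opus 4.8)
The plan is to mirror the structure of the preceding lemma (Lemma~\ref{le:tanaka3}), replacing its parity bookkeeping by the one appropriate to $\gamma=3$. Suppose, for contradiction, that $(v_1,v_2,v_3,v_4,v_5)$ is a Tanaka quintuple in $\Theta(\alpha,\beta,3)$ with $\alpha,\beta$ even. By Lemma~\ref{le:tanaka2} we already know $\alpha,\beta\ge2$, so in fact $\alpha,\beta\ge 2$ and even; in particular $\alpha+\beta$ is even, consistent with part~(i) of Lemma~\ref{le:theta}. The first step is to locate $v_5$: since some vertex of the quintuple must play the role of the ``tip'' $v_5$, and by Lemma~\ref{le:theta} applied with the role of $P_{\gamma+1}$ taken by whichever of the three paths contains $v_5$, we must check that $v_5$ can only sit on the path of length $3$. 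This is where the parity hypothesis enters: part~(i) of Lemma~\ref{le:theta} forces the two paths \emph{not} containing $v_5$ to have lengths of the same parity summing to an even number, which rules out $v_5$ lying on $P_{\alpha+1}$ or $P_{\beta+1}$ (as that would require $\beta+3$ or $\alpha+3$ to be even, impossible since $\alpha,\beta$ are even). Hence $v_5=z_k$ with $0<k<3$, i.e.\ $k\in\{1,2\}$.

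The second step is to pin down $v_1,v_2,v_3,v_4$. By part~(iii) of Lemma~\ref{le:theta} they lie on $P_{\alpha+1}\cup P_{\beta+1}$ and not all on a single one of these paths, so after relabelling we may take $v_1=x_i$, $v_2=x_{i+1}$, $v_3=y_j$, $v_4=y_{j+1}$ with $0\le i\le\alpha-1$, $0\le j\le\beta-1$. Then \eqref{eq:1lemma} gives $2i+2j+2=\alpha+\beta$, while \eqref{eq:2lemma} and \eqref{eq:3lemma} read $2i+1<\alpha+3$ and $2j+1<\beta+3$, i.e.\ $i\le \alpha/2+ \tfrac12$ and $j\le\beta/2+\tfrac12$; since $\alpha,\beta$ are even and $i,j$ integers this means $i\le\alpha/2$ and $j\le\beta/2$. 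Combined with $i+j=(\alpha+\beta)/2-1=\alpha/2+\beta/2-1$, the only possibilities are $(i,j)=(\alpha/2,\beta/2-1)$ or $(i,j)=(\alpha/2-1,\beta/2)$.

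The third step is to compute the four distances from $v_5$ and reach the contradiction with \eqref{tanaka3}. For a vertex $z_k$ on $P_{\gamma+1}$ one has $d(x_i,z_k)=\min\{i+k,\ \alpha-i+\gamma-k\}$ and similarly $d(y_j,z_k)=\min\{j+k,\ \beta-j+\gamma-k\}$; plugging in $\gamma=3$, $k\in\{1,2\}$ and the two candidate pairs $(i,j)$, one checks in each of the (finitely many) cases that $d(v_1,v_5)=d(v_2,v_5)$ or $d(v_3,v_5)=d(v_4,v_5)$, contradicting the requirement $d(v_5,v_2)=d(v_5,v_1)+1$ and $d(v_5,v_3)=d(v_5,v_4)+1$. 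The main obstacle, such as it is, is purely bookkeeping: one must be careful that the distance from $z_k$ to $x_i$ can be realized going ``either way around'' the relevant cycle, so the $\min$ in the distance formula has to be evaluated honestly rather than assumed, and the two sub-cases $k=1$ and $k=2$ together with the two sub-cases for $(i,j)$ must all be dispatched; but each is a one-line arithmetic check, exactly as in the proof of Lemma~\ref{le:tanaka3}.
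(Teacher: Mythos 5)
Your proposal is correct and follows essentially the same route as the paper: use Lemma~\ref{le:theta} (parity and the relations \eqref{eq:1lemma}--\eqref{eq:3lemma}) to force $v_5=z_k$ with $k\in\{1,2\}$ and $(i,j)\in\{(\alpha/2,\beta/2-1),(\alpha/2-1,\beta/2)\}$, then verify by direct distance computation that \eqref{tanaka3} fails in each case. The paper's version is just terser, checking only the case $(i,j)=(\alpha/2,\beta/2-1)$ explicitly and invoking symmetry for the other.
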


\begin{proof}
Put $\alpha=2k$, $\beta=2l$, cf. Definition~\ref{Deftheta} for notation. Then we can assume that
\[
v_1=x_i,\quad v_2=x_{i+1},\quad v_3=y_j,\quad v_4=y_{j+1},
\]
$0\le i\le 2k-1$, $0\le j\le 2l-1$. Then (\ref{eq:1lemma}), (\ref{eq:2lemma}), (\ref{eq:3lemma}) imply,
that either $i=k, j=l-1$ or~$i=k-1, j=l$.
If holds the former then $d(v_1,z_1)=d(v_2,z_1)=k+1$ and $d(v_1,z_2)=k+1>d(v_2,z_2)=k$,
hence it is impossible to choose $v_5$ satisfying~(\ref{tanaka3}). Similarly in the latter case.
\end{proof}

\begin{lemma}
If $\beta\ge2$ is even and $\gamma\ge3$ is odd then $\Theta(2,\beta,\gamma)$ does not admit Tanaka quintuple.
\end{lemma}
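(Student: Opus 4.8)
The plan is to follow the template of the three preceding lemmas: use Lemma~\ref{le:theta} together with the symmetries of the graph to reduce an arbitrary Tanaka quintuple to one of a short list of canonical shapes, and then eliminate each shape by a direct distance computation against~\eqref{tanaka3}.

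First I would locate $v_5$. Of the three defining paths of $\Theta(2,\beta,\gamma)$, those of lengths $2$ and $\beta$ are even and the one of length $\gamma$ is odd; so if $v_5$ lay on the path of length $2$ or on the one of length $\beta$, the two remaining paths would have total length $\beta+\gamma$ or $2+\gamma$, both odd, contradicting part~(i) of Lemma~\ref{le:theta}. Hence $v_5=z_k$ with $0<k<\gamma$ by part~(ii), the parity condition is met since $\alpha+\beta=2+\beta$ is even, and by part~(iii) the vertices $v_1,v_2,v_3,v_4$ all lie on $P_3\cup P_{\beta+1}$, not all on $P_3$ and not all on $P_{\beta+1}$. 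Since $P_3$ has only the two (mutually incident) edges $\{x_0,x_1\},\{x_1,x_2\}$, and not all four vertices lie on $P_{\beta+1}$, exactly one of $\{v_1,v_2\},\{v_3,v_4\}$ is an edge of $P_3$ and the other an edge of $P_{\beta+1}$. Using the symmetry $(v_1,v_2,v_3,v_4,v_5)\mapsto(v_4,v_3,v_2,v_1,v_5)$ of Tanaka quintuples, together with the reflection of $\Theta(2,\beta,\gamma)$ interchanging the two hubs $x_0$ and $x_2$, and reading off the cyclic order forced by~\eqref{tanaka2} on the isometric cycle $P_3\cup P_{\beta+1}$, I would reduce to the form of Lemma~\ref{le:theta}: $v_1=x_i$, $v_2=x_{i+1}$, $v_3=y_j$, $v_4=y_{j+1}$, so that with $\alpha=2$ we have $i\in\{0,1\}$ and~\eqref{eq:1lemma} forces $i+j=\beta/2$ (while~\eqref{eq:2lemma} and~\eqref{eq:3lemma} hold automatically). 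Only two configurations survive: Case~A, $(v_1,v_2,v_3,v_4)=(x_0,x_1,y_{\beta/2},y_{\beta/2+1})$, and Case~B, $(v_1,v_2,v_3,v_4)=(x_1,x_2,y_{\beta/2-1},y_{\beta/2})$.

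Now write $u=x_0$, $w=x_2$, so $d(u,w)=2$. Every geodesic from $z_k$ to a vertex of $P_3\cup P_{\beta+1}$ runs through $u$ or $w$, whence $d(z_k,u)=\min(k,\gamma-k+2)$, $d(z_k,w)=\min(\gamma-k,k+2)$, $d(z_k,x_1)=1+\min(d(z_k,u),d(z_k,w))$, and $d(z_k,y_\ell)=\min(d(z_k,u)+\ell,\,d(z_k,w)+\beta-\ell)$. The crucial point is that, because $\gamma$ is odd (so $k\neq\gamma-k$), one never has $d(z_k,u)=d(z_k,w)$; in particular $d(z_k,u)\le d(z_k,w)$ already forces $d(z_k,u)<d(z_k,w)$. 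In Case~A the relation $d(z_k,v_2)=d(z_k,v_1)+1$ of~\eqref{tanaka3} reads $d(z_k,x_1)=d(z_k,u)+1$, forcing $d(z_k,u)<d(z_k,w)$; then $d(z_k,y_{\beta/2})=\beta/2+d(z_k,u)$, and the second relation $d(z_k,v_3)=d(z_k,v_4)+1$ collapses to $0=2$ or to $d(z_k,u)=d(z_k,w)$, both impossible. In Case~B the relation $d(z_k,v_2)=d(z_k,v_1)+1$ reads $d(z_k,w)=d(z_k,x_1)+1$ and forces $d(z_k,w)=d(z_k,u)+2$; substituting this into $d(z_k,y_{\beta/2-1})$ and $d(z_k,y_{\beta/2})$ makes the second relation of~\eqref{tanaka3} read $-1=1$. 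Either way~\eqref{tanaka3} fails, so $\Theta(2,\beta,\gamma)$ has no Tanaka quintuple.

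The only genuinely delicate part is the reduction to Cases~A and~B — that is, checking that the symmetries of $\Theta(2,\beta,\gamma)$, Lemma~\ref{le:theta}, and the cyclic constraint~\eqref{tanaka2} really pin the quintuple down to these two shapes; this is handled exactly as in the preceding lemmas. The remaining work is the short computation of the last paragraph, where the hypothesis that $\gamma$ is odd enters solely through the strict inequality $d(z_k,u)<d(z_k,w)$, which is precisely what makes the two equalities in~\eqref{tanaka3} incompatible (for $\gamma$ even and $k=\gamma/2$ a Tanaka quintuple genuinely appears).
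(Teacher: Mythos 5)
Your proposal is correct and follows essentially the same route as the paper: reduce the quintuple via Lemma~\ref{le:theta} (and the symmetries of $\Theta(2,\beta,\gamma)$) to an explicit configuration with $v_1,v_2$ on $P_3$, $v_3,v_4$ on $P_{\beta+1}$ at the positions forced by~\eqref{eq:1lemma}, and $v_5=z_k$, then contradict~\eqref{tanaka3} by a direct distance computation in which the oddness of $\gamma$ is what blocks the two required equalities. The only difference is presentational: the paper absorbs your Case~B into ``by symmetry'' and phrases the final contradiction as $k<p+1$ versus $p<k$ rather than via $d(z_k,u)\neq d(z_k,w)$, so your write-up is, if anything, slightly more explicit about the case analysis.
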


\begin{proof}
Put $\beta=2l, \gamma=2p+1$ and assume that $(v_1,v_2,v_3,v_4,v_5)$ is a Tanaka quintuple in $\Theta(2,\beta,\gamma)$, cf. Definition~\ref{Deftheta} for notation.
By symmetry we can assume, that $v_1=x_0$, $v_2=x_1$, $v_3=y_j$, $v_4=y_{j+1}$, $v_5=z_k$.
Then, in view of (\ref{eq:1lemma}), $j=l$. Now,
\[
d(v_5,v_1)=k,\quad
d(v_5,v_2)=2p+2-k,
\]
which, by (\ref{tanaka3}) implies $k<p+1$, and
\[
d(v_5,v_3)=k+l,\quad
d(v_5,v_4)=2p-k+l,
\]
which implies $p<k$, which is impossible.
\end{proof}

\begin{proposition}
Assume that $\alpha,\beta,\gamma\ge2$.
If either
\begin{itemize}
\item $\alpha=2$, $\beta,\gamma$ are even, or
\item $\alpha,\beta\ge4$ are even, $\gamma\ne3$, or
\item $\alpha,\beta\ge3$ are odd, $\gamma\ge3$,
\end{itemize}
then $\Theta(\alpha,\beta,\gamma)$ admits Tanaka quintuple, with
$v_5\in P_{\gamma+1}$, cf. Definition~\ref{Deftheta} for notation.

In particular, if $\alpha,\beta,\gamma\ge4$ then $\Theta(\alpha,\beta,\gamma)$
admits Tanaka quintuple.
\end{proposition}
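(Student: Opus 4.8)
The plan is to produce, in each of the three cases, an explicit Tanaka quintuple whose fifth vertex lies on $P_{\gamma+1}$. By Lemma~\ref{le:theta}, such a quintuple is forced (up to interchanging $P_{\alpha+1}$ and $P_{\beta+1}$) to have the shape $v_1=x_i$, $v_2=x_{i+1}$, $v_3=y_j$, $v_4=y_{j+1}$, $v_5=z_k$; since the hypotheses on $\alpha,\beta$ are symmetric in all three cases, we may and do assume $\alpha\le\beta$. The starting observation is that $\alpha+\beta$ is even in each case, so the equation $i+j+1=\tfrac12(\alpha+\beta)$ has solutions in nonnegative integers.

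Next I would prove a partial converse to Lemma~\ref{le:theta}: if integers $i,j,k$ satisfy $0\le i\le\alpha-1$, $0\le j\le\beta-1$, $0<k<\gamma$, $i+j+1=\tfrac12(\alpha+\beta)$ and
\[
2i+1<\alpha+\gamma,\qquad 2j+1<\beta+\gamma,\qquad 2(k+i+1)\le\alpha+\gamma,\qquad 2(k+j)\ge\beta+\gamma,
\]
then $(x_i,x_{i+1},y_j,y_{j+1},z_k)$ is a Tanaka quintuple. Here \eqref{tanaka1} is trivial, and \eqref{tanaka2} follows from the same distance bookkeeping as in the proof of Lemma~\ref{le:theta}: the equality $i+j+1=\tfrac12(\alpha+\beta)$ makes the two routes around the cycle $P_{\alpha+1}\cup P_{\beta+1}$ equally long, while $2i+1<\alpha+\gamma$ and $2j+1<\beta+\gamma$ keep every route through $P_{\gamma+1}$ strictly longer. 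The point of the last two inequalities is to force the geodesics from $z_k$ to $x_i$ and to $x_{i+1}$ to leave $P_{\gamma+1}$ at $z_0=x_0$, and those from $z_k$ to $y_j$ and to $y_{j+1}$ to leave it at $z_\gamma=y_\beta$; comparing the lengths $k+i$ and $k+i+1$ with $(\gamma-k)+(\alpha-i)$ and $(\gamma-k)+(\alpha-i-1)$, and symmetrically on the $y$-side, then gives $d(v_5,v_2)=d(v_5,v_1)+1$ and $d(v_5,v_3)=d(v_5,v_4)+1$, which is \eqref{tanaka3}. In this last step $\alpha\le\beta$ is used to exclude the detours that loop around $P_{\beta+1}$ or $P_{\alpha+1}$; for the choices of $i,j$ below, which keep $i$ small and $j$ near $\beta/2$, these detours are never shorter.

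With the converse in hand the proposition reduces to exhibiting suitable $i,j,k$. I would take:
\begin{itemize}
\item $\alpha=2$, $\beta,\gamma$ even: $i=0$, $j=\beta/2$, $k=\gamma/2$ (the last two inequalities both become $2k=\gamma$);
\item $\alpha,\beta\ge4$ even, $\gamma\ne3$: if $\gamma$ is even, $i=\alpha/2-1$, $j=\beta/2$, $k=\gamma/2$; if $\gamma$ is odd, and hence $\gamma\ge5$, $i=\alpha/2-2$, $j=\beta/2+1$, $k=(\gamma-1)/2$;
\item $\alpha,\beta\ge3$ odd, $\gamma\ge3$: $i=(\alpha-3)/2$, $j=(\beta+1)/2$, $k=\lfloor\gamma/2\rfloor$.
\end{itemize}
In each line, verifying $0\le i\le\alpha-1$, $0\le j\le\beta-1$, $0<k<\gamma$ and the four inequalities is a one- or two-line computation, and is exactly where the numerical restrictions ($\alpha,\beta\ge4$, $\gamma\ne3$, the stated parities) enter. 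Finally, the ``in particular'' clause is immediate: among any three integers $\ge4$ two share a parity, so after relabelling we may take those two to be $\alpha,\beta$ and the remaining one to be $\gamma\ge4$, whereupon the second or third bullet applies.

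The step I expect to be the main obstacle is the converse lemma, and within it the verification of \eqref{tanaka3}: one has to pin down, for each of $x_i$, $x_{i+1}$, $y_j$, $y_{j+1}$, which end of $P_{\gamma+1}$ its geodesic from $z_k$ exits through, and check that no walk winding around $P_{\alpha+1}$ or $P_{\beta+1}$ beats it. Once that is settled, what remains is the routine case arithmetic above, the only mild nuisance being the split according to the parity of $\gamma$ in the second case.
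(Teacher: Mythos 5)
Your proposal is correct and follows essentially the same route as the paper: an explicit construction of the quintuple $(x_i,x_{i+1},y_j,y_{j+1},z_k)$ case by case, with your choices of $(i,j,k)$ agreeing with the paper's in the $\alpha=2$ and odd cases and differing only trivially in the even case with $\gamma$ even. The only added value is that you isolate the sufficient inequalities on $(i,j,k)$ in a separate lemma and verify them, whereas the paper leaves the verification as ``easy to check''; the distance bookkeeping in your lemma is sound.
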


\begin{proof}
First assume that $\alpha=2l+2$, $\beta=\alpha+2k=2k+2l+2$, $k,l\ge0$.
If $\gamma=2p$ and  either $l=0$, $p\ge1$ or $p=1$, $l\ge1$ then we put
\[
v_1:=x_{l},\quad v_2:=x_{l+1},\quad v_3:=y_{k+l+1},\quad v_4:=y_{k+l+2},\quad v_5:=z_{p},
\]
and if $l\ge1$, $\gamma=2p+\epsilon\ge4$, with $p\ge1$, $\epsilon\in\{0,1\}$ we set
\[
v_1:=x_{l-1},\quad v_2:=x_{l},\quad v_3:=y_{k+l+2},\quad v_4:=y_{k+l+3},\quad v_5:=z_{p}.
\]
It is easy to verify that $(v_1,v_2,v_3,v_4,v_5)$ is a Tanaka quintuple.
Note that for $\gamma=3$ we have $d(v_1,v_3)=d(v_1,v_4)=k+2l+4$, so in this case
the construction doesn't work.

Now assume that $\alpha=2l+3$, $l\ge0$, $\beta=\alpha+2k=2k+2l+3$ and $\gamma=2p+\epsilon$,
with $p\ge1$, $\epsilon\in\{0,1\}$. Then we put
\[
v_1:=x_{l},\quad v_2:=x_{l+1},\quad v_3:=y_{k+l+2},\quad v_4:=y_{k+l+3},\quad v_5:=z_p,
\]
and, again, $(v_1,v_2,v_3,v_4,v_5)$ becomes a Tanaka quintuple for $\gamma\ge3$.
\end{proof}

Summing up, we have the following result.

\begin{theorem}\label{th:thetawith5t}
Assume, that $1\le\alpha\le\beta\le\gamma$, $\beta\ge2$. Then $\Theta(\alpha,\beta,\gamma)$
admits Tanaka quintuple if and only if either
\begin{itemize}
\item $\alpha=2$ and $\beta,\gamma$ are even, or
\item $\alpha=3$ and $\beta,\gamma$ are odd, or
\item $\alpha\ge4$.
\end{itemize}
\end{theorem}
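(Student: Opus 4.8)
The plan is to combine the preparatory lemmas of this section with the explicit construction from the Proposition immediately preceding the theorem. The statement is an equivalence, so I would split it into the "if" direction (these three families do admit a Tanaka quintuple) and the "only if" direction (no other $\Theta(\alpha,\beta,\gamma)$ with $\alpha\le\beta\le\gamma$, $\beta\ge2$ does).

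\textbf{The "if" direction.} Under the normalization $1\le\alpha\le\beta\le\gamma$, I would check that each of the three listed cases is covered by the Proposition. If $\alpha=2$ and $\beta,\gamma$ are even, this is literally the first bullet of the Proposition (with $\gamma$ even, hence $\gamma\ne 3$ is automatic). If $\alpha=3$ and $\beta,\gamma$ are odd, since $\beta\ge\alpha=3$ and $\gamma\ge\beta\ge3$ this is the third bullet of the Proposition. If $\alpha\ge4$, then $\alpha,\beta,\gamma\ge4$, which is exactly the "in particular" clause of the Proposition. So in every case a Tanaka quintuple exists (with $v_5\in P_{\gamma+1}$), and the "if" direction follows with no new work.

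\textbf{The "only if" direction.} Suppose $\Theta(\alpha,\beta,\gamma)$ admits a Tanaka quintuple; I must show one of the three cases holds. By Lemma~\ref{le:tanaka2} we have $\alpha,\beta,\gamma\ge2$. If $\alpha\ge4$ we are done, so assume $\alpha\in\{2,3\}$ and argue by the parity restrictions. By Lemma~\ref{le:theta}(i), applied after choosing which path contains $v_5$, the sum of the lengths of the two paths \emph{not} containing $v_5$ must be even, i.e. the three parities cannot be one-odd-two-even or two-odd-one-even in a way that obstructs this; more precisely for whichever path $P$ contains $v_5$, the other two path-lengths have equal parity. Combined with $\alpha\le\beta\le\gamma$ this already forces: if the three are not all of the same parity, then exactly one of them (the "odd one out") must be the one carrying $v_5$. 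I would then run through the small cases. For $\alpha=2$: if $\beta$ is odd then (by the parity constraint) $v_5$ would have to lie on $P_{\alpha+1}=P_3$, but then by Lemma~\ref{le:theta}(ii)-(iii) the quintuple vertices $v_1,\dots,v_4$ lie on $P_{\beta+1}\cup P_{\gamma+1}$ and $\beta,\gamma$ must have equal parity, so $\gamma$ is odd too — and this case $\Theta(2,\beta,\gamma)$ with $\beta$ odd is handled by noting $\beta\ge3$ and invoking the already-proved Lemma~\ref{le:tanaka3} (the roles of the "$2$"-path and one odd path swapped) or a direct repetition of its argument; the leftover possibility $\beta$ even forces $v_5\in P_{\gamma+1}$, whence equal parity of $\alpha=2$ and $\beta$ gives $\beta$ even and then the lemma "$\beta$ even, $\gamma$ odd $\Rightarrow$ no Tanaka quintuple" rules out odd $\gamma$, leaving $\beta,\gamma$ even — case one. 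For $\alpha=3$: parity forces $v_5\in P_{\gamma+1}$ unless all three are odd; the lemma "$\alpha,\beta$ even $\Rightarrow \Theta(\alpha,\beta,3)$ has no Tanaka quintuple" and its variants, together with $\alpha=3$ odd, push $\beta,\gamma$ to be odd — case two. I would present this as a short case analysis table rather than prose.

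\textbf{Main obstacle.} The bookkeeping of \emph{which} path carries $v_5$ is the delicate point: Lemma~\ref{le:theta} is stated with $v_5\in P_{\gamma+1}$, but in the "only if" argument $v_5$ may a priori sit on $P_{\alpha+1}$ or $P_{\beta+1}$, so I must either invoke symmetry carefully (the lemmas about $\Theta(\alpha,\beta,2)$ and $\Theta(\alpha,\beta,3)$ are symmetric in the first two arguments, which helps) or re-derive the parity/inequality constraints \eqref{eq:1lemma}--\eqref{eq:3lemma} with the indices permuted. The rest is a finite check: once $\alpha\in\{2,3\}$ is fixed and the location of $v_5$ is pinned down by Lemma~\ref{le:theta}(iii), the four "non-existence" lemmas of this section (Lemma~\ref{le:tanaka3} and the three unnumbered ones) eliminate every parity pattern except the two listed, and the Proposition supplies the converse. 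I expect this direction to be essentially a corollary of the section's lemmas, with the only real care needed in matching parities to path labels.
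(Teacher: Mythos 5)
Your ``if'' direction is fine and is exactly what the paper intends (the theorem is stated as a ``summing up'' of the preceding Proposition and lemmas). The genuine problem is in your ``only if'' direction, in the case $\alpha=3$ with $\beta,\gamma$ of mixed parity. You write that parity forces $v_5\in P_{\gamma+1}$ unless all three lengths are odd, and that the lemma ``$\alpha,\beta$ even $\Rightarrow\Theta(\alpha,\beta,3)$ has no Tanaka quintuple'' \emph{and its variants} then push $\beta,\gamma$ to be odd. No such variants exist in the section, and none can: when $\alpha=3$ and exactly one of $\beta,\gamma$ is even, the parity constraint of Lemma~\ref{le:theta}(i) places $v_5$ in the interior of the unique even path, with $v_1,\dots,v_4$ on the two odd paths of length $\ge3$ --- and this is precisely the configuration in which the third bullet of the Proposition (applied after permuting the arguments so the two odd lengths come first; its proof allows $\gamma=2p+\epsilon$ of either parity) \emph{constructs} a Tanaka quintuple. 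Concretely, in $\Theta(3,3,4)$ with $x_0=y_0=z_0$ and $x_3=y_3=z_4$, the quintuple $(v_1,v_2,v_3,v_4,v_5)=(x_0,x_1,y_2,y_3,z_2)$ satisfies $d(v_1,v_3)=d(v_2,v_4)=2$, $d(v_1,v_4)=d(v_2,v_3)=3$, $d(v_5,v_1)=2$, $d(v_5,v_2)=3$, $d(v_5,v_4)=2$, $d(v_5,v_3)=3$, so all of \eqref{tanaka1}--\eqref{tanaka3} hold; the same happens for $\Theta(3,4,5)$, $\Theta(3,3,6)$, etc. So the case you wave away is not eliminable, and your case analysis cannot close.

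This is not a defect you could have repaired by being more careful with the bookkeeping of which path carries $v_5$: the theorem's second bullet as printed is inconsistent with the Proposition (the correct condition for $\alpha=3$ appears to be ``not both $\beta$ and $\gamma$ even'' rather than ``$\beta,\gamma$ odd''). The paper offers no written proof beyond ``Summing up,'' so your instinct to assemble the lemmas is exactly the intended argument, and your $\alpha=2$ analysis and the $\alpha\ge4$ and all-odd cases do go through; but for $\alpha=3$ the assembly fails at the statement itself. Note that the downstream results (Theorem~\ref{thm:thetanonqe}, primality) are unaffected, since they only use the existence direction of Theorem~\ref{th:thetawith5t}.
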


\section{Theta graphs with modified Tanaka quintuple}\label{sec:tgwmtq}

We make a slight change for Tanaka quintuple, let $d(v_3,v_5)=d(v_4,v_5)$. A sequence of vertices $(v_1,v_2,v_3,v_4,v_5)$
in a graph $G=(V,E)$ is called \textit{modified Tanaka quintuple} if:
\begin{equation}\label{tanaka4}
\{v_1,v_2\},\{v_3,v_4\}\in E,
\end{equation}
\begin{equation}\label{tanaka5}
d(v_1,v_3)=d(v_2,v_4)=d(v_1,v_4)-1=d(v_2,v_3)-1,
\end{equation}
\begin{equation}\label{tanaka6}
d(v_5,v_2)=d(v_5,v_1)+1,\qquad
d(v_5,v_3)=d(v_5,v_4).
\end{equation}

Note that only non-bipartite graphs may have a modified Tanaka quintuple,
because vertices $v_3$ and $v_4$ are adjacent and have the same distance to $v_5$.

\begin{theorem}\label{modifiedtanaka}
If a graph $G$ admits modified Tanaka quintuple then $G$ is of non-$QE$ class.
\end{theorem}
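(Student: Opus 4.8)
The plan is to reduce the claim about a modified Tanaka quintuple to the ordinary Tanaka quintuple case by exhibiting a concrete conditionally negative‐definiteness failure. Recall that $G$ is of non-QE class precisely when $\QEC(G) > 0$, equivalently when there exists a vector $f$ with $\mathbf 1^\top f = 0$ and $f^\top D f > 0$, where $D$ is the distance matrix. So the whole task is to write down an explicit test vector $f$ supported on the five vertices $v_1,\dots,v_5$ (and zero elsewhere) for which $f^\top D f > 0$. This mimics the proof of Theorem~\ref{pro:tanaka}, but with the new relation $d(v_5,v_3)=d(v_5,v_4)$ in place of $d(v_5,v_3)=d(v_5,v_4)+1$.

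First I would set up notation: write $a := d(v_1,v_3) = d(v_2,v_4)$, so that $d(v_1,v_4) = d(v_2,v_3) = a+1$ by \eqref{tanaka5}; write $s := d(v_5,v_1)$, so $d(v_5,v_2) = s+1$ by \eqref{tanaka6}; and write $t := d(v_5,v_3) = d(v_5,v_4)$. Also $d(v_1,v_2) = d(v_3,v_4) = 1$ by \eqref{tanaka4}. Then I would consider a test vector of the form $f = c_1 e_{v_1} + c_2 e_{v_2} + c_3 e_{v_3} + c_4 e_{v_4} + c_5 e_{v_5}$ with the coefficients chosen antisymmetrically within the two edges, e.g. something like $c_1 = 1, c_2 = -1$ on the first edge and $c_3 = \kappa, c_4 = -\kappa$ on the second, together with $c_5$ chosen to enforce $\mathbf 1^\top f = 0$ (one natural choice makes $c_5 = 0$ if $\kappa$ is picked so the edge contributions cancel, but in general $v_5$ must carry weight). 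The key computation is then to expand $f^\top D f = \sum_{i,j} c_i c_j d(v_i,v_j)$ and substitute the distance relations above. The antisymmetry is designed so that the "diagonal block" on $\{v_1,v_2\}$ contributes $-2 c_1 c_2 \cdot 1$ (positive), likewise on $\{v_3,v_4\}$; the cross terms between the two edges telescope using \eqref{tanaka5} to leave a controlled remainder; and the terms involving $v_5$ are where the modification \eqref{tanaka6} is used — the relation $d(v_5,v_2) = d(v_5,v_1)+1$ produces a sign pattern that, combined with $d(v_5,v_3)=d(v_5,v_4)$, makes the $v_5$-contribution nonnegative (rather than strictly helping, as in the bipartite case). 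I expect the final expression to simplify to a strictly positive quadratic in $\kappa$ for an appropriate range, or more cleanly to a fixed positive constant after the optimal choice of $\kappa$ and $c_5$.

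The main obstacle is choosing the coefficients so that simultaneously (a) $\mathbf 1^\top f = 0$ holds, (b) the $v_5$-cross-terms do not destroy positivity, and (c) the bound is \emph{strict}. In the ordinary Tanaka case the extra $+1$ in $d(v_5,v_3)$ vs.\ $d(v_5,v_4)$ gives a little room; here that room is gone, so I anticipate needing to exploit the triangle inequality more carefully — in particular the constraints $|s - t| \le a$ and $|(s+1) - t| \le a+1$ coming from geodesics through $v_1,v_3$ and $v_2,v_4$ respectively — to verify that the remainder term has the right sign. A clean way to organize this is to take $f$ with $c_5 = 0$ but allow $c_1,\dots,c_4$ to be $\{1,-1,1,-1\}$ only if that already kills $\mathbf 1^\top f$; since it does ($1-1+1-1=0$), the vector $f = e_{v_1} - e_{v_2} + e_{v_3} - e_{v_4}$ has $\mathbf 1^\top f = 0$, and one computes
\[
f^\top D f = -2 d(v_1,v_2) - 2 d(v_3,v_4) + 2\bigl(d(v_1,v_4) + d(v_2,v_3) - d(v_1,v_3) - d(v_2,v_4)\bigr) = -4 + 4 = 0,
\]
which is not yet strict — so $v_5$ must genuinely be used, and I would instead take $f = e_{v_1} - e_{v_2} + e_{v_3} - e_{v_4} + \delta(e_{v_5} - \tfrac14(e_{v_1}+e_{v_2}+e_{v_3}+e_{v_4}))$ type perturbation, or rescale, and show the derivative of $f^\top D f$ in the perturbation direction at $\delta = 0$ is nonzero, forcing a strictly positive value nearby after renormalizing. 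That perturbative argument, using \eqref{tanaka6} to compute the relevant directional derivative and the triangle inequalities to control the Hessian, is the crux; the rest is bookkeeping analogous to Tanaka's original proof.
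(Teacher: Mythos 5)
Your plan is essentially the paper's own proof, just organized perturbatively: the paper writes down the explicit mean-zero vector $f(v_1)=f(v_4)=-(j+h)$, $f(v_2)=j+h$, $f(v_3)=j+h-1$, $f(v_5)=1$ (with $j=d(v_1,v_5)$, $h=d(v_3,v_5)$) and computes $f^\top Df=2j>0$ directly; this $f$ is exactly $-(j+h)f_0+(e_{v_5}-e_{v_3})$ for the degenerate vector $f_0$ you identify, i.e.\ a particular non-infinitesimal instance of your perturbation. Two corrections. First, a sign slip: for your stated $f=e_{v_1}-e_{v_2}+e_{v_3}-e_{v_4}$ the cross terms carry coefficients $2c_ic_j$, giving $2\bigl(d(v_1,v_3)+d(v_2,v_4)-d(v_1,v_4)-d(v_2,v_3)\bigr)=-4$ and hence $f^\top Df=-8$, not $0$; the vector with value $0$ (and the one matching the display you actually wrote) is $f_0=e_{v_1}-e_{v_2}-e_{v_3}+e_{v_4}$, with the two edges oriented consistently. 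Second, the crux you defer does go through and needs no triangle-inequality control of the Hessian: one checks $(Df_0)_{v_k}=0$ for $k=1,\dots,4$ while $(Df_0)_{v_5}=s-(s+1)-t+t=-1$ by \eqref{tanaka6}, so for the mean-zero perturbation $g=e_{v_5}-e_{v_3}$ one has $g^\top Df_0=-1\neq0$ and $(f_0+\delta g)^\top D(f_0+\delta g)=-2\delta+\delta^2\,g^\top Dg>0$ for small $\delta<0$. This derivative computation is the only place the modified condition is used, and carrying it out (or simply quoting the paper's closed-form vector) completes your argument.
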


\begin{proof}
Let $(v_1,v_2,v_3,v_4,v_5)$ be a modified Tanaka quintuple in a graph $G$ and assume that
$$
d(v_1,v_3)=d(v_2,v_4)=i, \hspace{0.2cm}d(v_1,v_5)=j \hspace{0.2cm} \mathrm{and} \hspace{0.2cm}d(v_4,v_5)=d(v_3,v_5)=h.
$$
Then
$$
d(v_2,v_3)=i+1, \hspace{0.2cm} \mathrm{and} \hspace{0.2cm} d(v_2,v_5)=j+1.
$$
Set
$$
f(v_1)=f(v_4)=-(j+h), \hspace{0.2cm} f(v_2)=j+h, \hspace{0.2cm} f(v_3)=j+h-1, \hspace{0.2cm} f(v_5)=1.
$$
Then
\begin{multline*}
\sum_{v_i,v_j\in V}f(v_i) f(v_j) d(v_i,v_j)=\\-(j+h)(j-i)+(j+h)(j-i)+(j+h-1)h-(j+h)(h-1)+j=2j >0,
\end{multline*}
hence $G$ is of non-$QE$ class.
\end{proof}

Now we observe that for the modified Tanaka quintuple the analog of Lemma~\ref{le:tanaka} holds.

\begin{lemma}\label{le:tanakamodified}
Assume that $(v_1,v_2,v_3,v_4,v_5)$ is a modified Tanaka quintuple in a graph~$G$.

If $(v_1=a_0,a_1,\ldots,a_r=v_3)$, $(v_2=b_0,b_1,\ldots,b_r=v_4)$ are geodesics then
\[
\{a_0,a_1,\ldots,a_r\}\cap\{b_0,b_1,\ldots,b_r\}=\emptyset\quad\hbox{and}\quad
v_5\notin\{a_0,a_1,\ldots,a_r,b_0,b_1,\ldots,b_r\}.
\]

If $(v_5=e_0,e_1,\ldots,e_p=v_1)$, $(v_5=f_0,f_1,\ldots,f_q=v_4)$, $(v_5=g_0,g_1,\ldots,g_q=v_3)$ are geodesics then
\[
v_3,v_4\notin\{e_0,e_1,\ldots,e_p\},\quad\hbox{and}\quad
v_1\notin\{f_0,f_1,\ldots,f_q\} \cup \{g_0,g_1,\ldots,g_q\}.
\]
\end{lemma}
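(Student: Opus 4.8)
The plan is to mimic the proof of Lemma~\ref{le:tanaka} step by step, since the only difference is that the condition $d(v_5,v_3)=d(v_5,v_4)+1$ has been weakened to $d(v_5,v_3)=d(v_5,v_4)$, and neither of these equalities was used to establish the disjointness of the two geodesics from $v_1$ to $v_3$ and from $v_2$ to $v_4$. First I would observe that the first displayed assertion of the lemma depends only on \eqref{tanaka4} and \eqref{tanaka5} (equivalently \eqref{tanaka1}, \eqref{tanaka2}): if two geodesics $(v_1=a_0,\dots,a_r=v_3)$ and $(v_2=b_0,\dots,b_r=v_4)$ shared a vertex $a_i=b_j$, the triangle-inequality argument from Lemma~\ref{le:tanaka} would force $d(v_1,v_4)\le r$ (or $d(v_1,v_4)<r$), contradicting \eqref{tanaka5}. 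Hence the intersection is empty verbatim.

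For the remaining part of that first assertion, I need $v_5\notin\{a_i\}\cup\{b_j\}$. If $v_5=a_i$ with $0<i<r$, then $d(v_5,v_3)=r-i$. In Lemma~\ref{le:tanaka} one used $d(v_5,v_3)=d(v_5,v_4)+1$; here I instead use $d(v_5,v_3)=d(v_5,v_4)$, which gives $d(v_5,v_4)=r-i$ as well, hence $d(v_1,v_4)\le i+(r-i)=r$, again contradicting \eqref{tanaka5}; the endpoint cases $i=0$ ($v_5=v_1$) and $i=r$ ($v_5=v_3$) are excluded because \eqref{tanaka6} forces $d(v_5,v_1)\ne d(v_5,v_2)$ and $v_5=v_3$ would make $v_5$ adjacent to $v_4$ with $d(v_5,v_4)=1\ne0=d(v_5,v_3)$. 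The case $v_5=b_j$ is symmetric, using $d(v_5,v_2)=d(v_5,v_1)+1$ together with $d(v_5,v_4)=d(v_5,v_3)$.

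For the second displayed assertion I would argue as in Lemma~\ref{le:tanaka}. If $(v_5=e_0,\dots,e_p=v_1)$ is a geodesic and $v_4=e_i$ with $0\le i<p$, then $d(v_5,v_4)=i$, and appending a geodesic from $v_4$ to $v_2$ of length $r$ gives $d(v_5,v_2)\le i+r$; but a geodesic from $v_5$ through $v_4$ then $v_3$ (length $r$, by the disjointness already proved there is a geodesic $v_4\to v_2\to\cdots$, or more directly $d(v_5,v_3)\le d(v_5,v_4)+d(v_4,v_3)=i+1$ which with $d(v_5,v_3)=d(v_5,v_4)=i$ is absurd since $i+1>i$). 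Cleanest: $d(v_5,v_3)=d(v_5,v_4)=i$ while $v_3,v_4$ are adjacent forces $|d(v_5,v_3)-d(v_5,v_4)|\le1$ trivially, so I instead note $v_4=e_i$ gives $d(v_5,v_1)=p>i=d(v_5,v_4)$, then $d(v_5,v_3)=i$ and the geodesic $e_0,\dots,e_i=v_4$ followed by the edge to $v_3$ would give a walk of length $i+1$ from $v_5$ to $v_3$, harmless; the actual contradiction comes from $d(v_2,v_3)=i+1$ and $d(v_5,v_2)=d(v_5,v_1)+1=p+1$, combined with $d(v_5,v_4)=i$, $d(v_4,v_2)=r$ forcing $p+1\le i+r$, hence $p\le i+r-1<r+r$... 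The main obstacle is getting this bookkeeping right: I must recompute, for the modified quintuple, exactly which inequality is violated when $v_3$ or $v_4$ lies on the $v_5$–$v_1$ geodesic, or $v_1$ lies on the $v_5$–$v_4$ or $v_5$–$v_3$ geodesic. Concretely, if $v_1=f_i$ on a geodesic $v_5\to v_4$, then $d(v_5,v_1)=i$, $d(v_5,v_2)=i+1$ by \eqref{tanaka6}, while $d(v_5,v_4)=q$, $d(v_1,v_4)=q-i=r+1$ (using $d(v_1,v_4)=d(v_1,v_3)+1$), and $d(v_5,v_3)=d(v_5,v_4)=q$; but then the geodesic $v_5\to v_1$ extended by a geodesic $v_1\to v_3$ of length $r$ yields $q=d(v_5,v_3)\le i+r=i+(d(v_1,v_4)-1)=i+(q-i-1)=q-1$, a contradiction. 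The case $v_1=g_i$ on a geodesic $v_5\to v_3$ is identical with $v_3$ in place of $v_4$, and the case $v_3$ or $v_4$ on a geodesic $v_5\to v_1$ is handled by the same estimate read backwards. I would write these three little computations out carefully and that completes the proof.
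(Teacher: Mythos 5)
Your handling of the first display and of the cases $v_4=e_i$, $v_1=f_i$ follows the paper's route (adapt the triangle--inequality arguments of Lemma~\ref{le:tanaka}, replacing $d(v_5,v_3)=d(v_5,v_4)+1$ by $d(v_5,v_3)=d(v_5,v_4)$), and the parts you actually complete are correct: disjointness uses only \eqref{tanaka5}; $v_5=a_i$ now yields $d(v_1,v_4)\le r$ instead of $r-1$, still contradicting \eqref{tanaka5}; and your computation for $v_1=f_i$ is exactly right. Three smaller issues: your paragraph on $v_4=e_i$ never lands the contradiction --- the missing line is $d(v_5,v_1)=d(v_5,v_4)+d(v_4,v_1)=i+(r+1)$, so that $d(v_5,v_2)\le d(v_5,v_4)+d(v_4,v_2)=i+r=d(v_5,v_1)-1$ contradicts $d(v_5,v_2)=d(v_5,v_1)+1$; you never address $v_3=e_i$, which is the one genuinely new case and the only one the paper writes out explicitly (there $d(v_5,v_4)=d(v_5,v_3)$ gives $d(v_5,v_2)\le d(v_5,v_3)+r=d(v_5,v_1)$, again against \eqref{tanaka6}); and your stated reason for excluding $v_5=v_1$ is wrong ($v_5=v_1$ is consistent with $d(v_5,v_2)=d(v_5,v_1)+1$; it is excluded because it would force $d(v_5,v_3)=r\ne r+1=d(v_5,v_4)$).

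The genuine gap is the case $v_1=g_i$, which is \emph{not} ``identical with $v_3$ in place of $v_4$''. The $f$-case works because $d(v_1,v_4)=r+1$ makes $d(v_5,v_4)=i+r+1$ exceed the bound $d(v_5,v_3)\le i+r$; in the $g$-case the roles of $r$ and $r+1$ flip, and you only get $d(v_5,v_4)=d(v_5,v_3)=i+r$ against the bound $d(v_5,v_4)\le d(v_5,v_1)+d(v_1,v_4)=i+r+1$, which is no contradiction. No rearrangement of triangle inequalities will close this, because that clause of the lemma fails as a statement about general graphs: in $\Theta(2,2,3)$, realized as the $4$-cycle $v_1v_2v_4v_3v_1$ together with the path $v_1v_5wv_4$, the quintuple $(v_1,v_2,v_3,v_4,v_5)$ is a modified Tanaka quintuple (here $r=1$, $d(v_5,v_1)=1$, $d(v_5,v_2)=d(v_5,v_3)=d(v_5,v_4)=2$), and yet $(v_5,v_1,v_3)$ is a geodesic from $v_5$ to $v_3$ passing through $v_1$. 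So $v_1\notin\{g_0,\ldots,g_q\}$ is simply not a consequence of \eqref{tanaka4}--\eqref{tanaka6}. To be fair, the paper's own proof (``the rest is similar'') glosses over exactly this point, and that clause does not appear to be needed in the later applications; but your proposal cannot be completed as written, and you should either drop that clause or flag it as unprovable.
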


\begin{proof}
If $(v_5=e_0,e_1,\ldots,e_p=v_1)$ is a geodesics and $v_3=e_i$, $0<i<p $, then $d(v_5,v_1)=d(v_5,v_2)$, which contradicts (\ref{tanaka6}).
The rest of the proof is similar to that of Lemma \ref{le:tanaka}.
It is sufficient to replace $d(v_3,v_5)=d(v_4,v_5)-1$ by $d(v_3,v_5)=d(v_4,v_5)$.
\end{proof}

In a similar manner we can prove that Lemma~\ref{le:theta}, \ref{le:tanaka2} and \ref{le:tanaka3}
are also true for modified Tanaka quintuple. In the proofs, it is sufficient to replace $d(v_3,v_5)=d(v_4,v_5)-1$ by $d(v_3,v_5)=d(v_4,v_5)$.

\begin{lemma}\label{le:thetamodified}
Assume that  $(v_1,v_2,v_3,v_4,v_5)$ is a Tanaka quintuple on $\Theta(\alpha,\beta,\gamma)$,
with $v_5=z_k$ for some $0\le k\le\gamma$, cf. Definition~\ref{Deftheta} for notation. Then
\begin{itemize}
\item[i)] $\alpha+\beta$ is even,
\item[ii)] $0<k<\gamma$,
\item[iii)] $v_1,v_2,v_3,v_4$ are vertices of $P_{\alpha+1}\cup P_{\beta+1}$,
but they can not be all vertices of $P_{\alpha+1}$ nor of $P_{\beta+1}$
\end{itemize}
Moreover, if $v_1=x_{i}, v_2=x_{i+1}, v_3=y_j,v_4=y_{j+1}$ then
\begin{align}
2i+2j+2&=\alpha+\beta,\label{eq:1lemmamodified}\\
2i+1&<\alpha+\gamma,\label{eq:2lemmamodified}\\
2j+1&<\beta+\gamma.\label{eq:3lemmamodified}
\end{align}
\end{lemma}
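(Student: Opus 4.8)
The plan is to run the argument of Lemma~\ref{le:theta} almost verbatim, with Lemma~\ref{le:tanakamodified} playing the role of Lemma~\ref{le:tanaka}; in keeping with the remark preceding the statement, the hypothesis here is that $(v_1,v_2,v_3,v_4,v_5)$ is a \emph{modified} Tanaka quintuple. First I would fix geodesics $(v_1=a_0,a_1,\ldots,a_r=v_3)$ and $(v_2=b_0,b_1,\ldots,b_r=v_4)$. By Lemma~\ref{le:tanakamodified} these two geodesics are vertex-disjoint and avoid $v_5$, so $\mathcal C:=(a_0,\ldots,a_r,b_r,\ldots,b_0)$ is a genuine cycle in $\Theta(\alpha,\beta,\gamma)$ not containing $v_5=z_k$. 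Since every cycle of a theta graph is the union of two of the three defining paths $P_{\alpha+1},P_{\beta+1},P_{\gamma+1}$, and the only such union missing the vertex $z_k$ (which we will see is an interior vertex of $P_{\gamma+1}$) is $P_{\alpha+1}\cup P_{\beta+1}$, we get $\mathcal C=P_{\alpha+1}\cup P_{\beta+1}$. This immediately forces $\alpha+\beta=2r+2$, giving (i); it forces $0<k<\gamma$, since the common endpoints $x_0=z_0$ and $x_\alpha=z_\gamma$ lie on $\mathcal C$, giving (ii); and it shows that $v_1,v_2,v_3,v_4$ all lie on $P_{\alpha+1}\cup P_{\beta+1}$.

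Next I would prove the second half of (iii). Suppose for contradiction that $v_1,v_2,v_3,v_4$ are all vertices of $P_{\alpha+1}$ (the case of $P_{\beta+1}$ being symmetric). Since $v_5=z_k$ with $0<k<\gamma$, every geodesic from $v_5$ to an interior vertex of $P_{\alpha+1}$ enters $P_{\alpha+1}$ at $x_0$ or at $x_\alpha$ and then runs monotonically along it. In the configuration $v_1=x_i,v_2=x_{i+1},v_4=x_j,v_3=x_{j+1}$ (the two edges necessarily disjoint along the path), any geodesic from $v_5$ to $v_4$ must therefore pass through $v_1$ or through $v_3$; but $v_1$ is excluded by Lemma~\ref{le:tanakamodified}, and $v_3$ is excluded because $v_3$ lying on such a geodesic would give $d(v_5,v_4)=d(v_5,v_3)+1$, contradicting \eqref{tanaka6}. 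The other labeling $v_2=x_i,v_1=x_{i+1},v_3=x_j,v_4=x_{j+1}$ is handled by looking at a geodesic from $v_5$ to $v_1$: it must pass through $v_2$ or $v_4$, which is forbidden respectively by $d(v_5,v_2)=d(v_5,v_1)+1$ (a consequence of \eqref{tanaka6}) and by Lemma~\ref{le:tanakamodified}.

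Finally, assuming $v_1=x_i,v_2=x_{i+1},v_3=y_j,v_4=y_{j+1}$, I would derive the three displayed identities. Condition \eqref{tanaka5} pins down that the geodesic $(a_0,\ldots,a_r)$ from $v_1$ to $v_3$ runs through the common endpoint $x_0=y_0$ (the route through $x_\alpha=y_\beta$ would make $d(v_2,v_3)<d(v_1,v_3)$), whence $d(v_1,v_3)=i+j=r$, and together with $\alpha+\beta=2r+2$ this is \eqref{eq:1lemmamodified}. For \eqref{eq:2lemmamodified} one notes that the walk $(x_{i+1},\ldots,x_\alpha=z_\gamma,\ldots,z_0=y_0,\ldots,y_j)$ around $P_{\gamma+1}$ has length $(\alpha-i-1)+\gamma+j$ and, by \eqref{tanaka5}, must be at least $d(v_2,v_3)=i+j+1$, which rearranges to $2i+1<\alpha+\gamma$; \eqref{eq:3lemmamodified} follows symmetrically from the walk from $v_4$ to $v_1$ around $P_{\gamma+1}$. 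None of these three computations mentions $v_5$, so they transfer from Lemma~\ref{le:theta} unchanged. I do not expect a serious obstacle here: the only genuinely new input compared with Lemma~\ref{le:theta} is the pair of observations used in the previous paragraph, that the equalities $d(v_5,v_3)=d(v_5,v_4)$ and $d(v_5,v_2)=d(v_5,v_1)+1$ bar $v_3$ from any geodesic $v_5\to v_4$ and $v_2$ from any geodesic $v_5\to v_1$ — this is the single spot where \eqref{tanaka6} must replace the original Tanaka condition, and it costs nothing.
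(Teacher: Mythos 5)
Your proposal is correct and matches the paper exactly: the paper's own ``proof'' of this lemma is just the remark that one repeats the proof of Lemma~\ref{le:theta} with Lemma~\ref{le:tanakamodified} in place of Lemma~\ref{le:tanaka} and with $d(v_3,v_5)=d(v_4,v_5)-1$ replaced by $d(v_3,v_5)=d(v_4,v_5)$, which is precisely what you carry out, correctly isolating the one place (excluding $v_3$ from a geodesic $v_5\to v_4$ and $v_2$ from a geodesic $v_5\to v_1$) where \eqref{tanaka6} is actually needed. You also rightly read the statement's ``Tanaka quintuple'' as the intended ``modified Tanaka quintuple,'' consistent with the section it sits in.
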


\begin{lemma}\label{le:tanaka2modified}
If $\Theta(\alpha,\beta,\gamma)$ admits a modified Tanaka quintuple then $\alpha,\beta,\gamma\ge2$.
\end{lemma}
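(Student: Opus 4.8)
The plan is to obtain the statement as an immediate corollary of the modified analogue of Lemma~\ref{le:theta}, namely Lemma~\ref{le:thetamodified}, exactly the way Lemma~\ref{le:tanaka2} was deduced from Lemma~\ref{le:theta} in the unmodified case. In other words, parts (ii) and (iii) of Lemma~\ref{le:thetamodified} already contain everything needed.

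First I would exploit the symmetry of $\Theta(\alpha,\beta,\gamma)$ under permuting its three defining paths. If $(v_1,\dots,v_5)$ is a modified Tanaka quintuple, then $v_5$ is a vertex of $\Theta(\alpha,\beta,\gamma)$, hence lies on at least one of $P_{\alpha+1},P_{\beta+1},P_{\gamma+1}$; renaming the parameters (which does not change the graph, only the names of the three branches) we may assume $v_5=z_k$ for some $0\le k\le\gamma$, so that Lemma~\ref{le:thetamodified} applies directly.

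Now part (ii) gives $0<k<\gamma$, which forces $\gamma\ge2$ (and, as a byproduct, shows $v_5$ is an interior vertex of $P_{\gamma+1}$, so it is not a pole and the branch carrying $v_5$ was unambiguous). Part (iii) states that $v_1,v_2,v_3,v_4$ are all vertices of $P_{\alpha+1}\cup P_{\beta+1}$, but not all of them lie in $P_{\alpha+1}$ and not all of them lie in $P_{\beta+1}$. Since $P_{\alpha+1}$ and $P_{\beta+1}$ share only the two poles, "not all in $P_{\beta+1}$" means some $v_i$ equals an interior vertex $x_{i_0}$ with $0<i_0<\alpha$, whence $\alpha\ge2$; symmetrically "not all in $P_{\alpha+1}$" yields an interior vertex $y_{j_0}$ with $0<j_0<\beta$, whence $\beta\ge2$. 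Combining the three inequalities finishes the proof.

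The only point requiring care — and it is the "main obstacle" only in a nominal sense — is that Lemma~\ref{le:thetamodified} (together with the underlying Lemma~\ref{le:tanakamodified}) really is valid for \emph{modified} Tanaka quintuples. This is precisely the "in a similar manner" assertion preceding Lemma~\ref{le:thetamodified}: the proof of Lemma~\ref{le:theta} goes through verbatim once one replaces every use of $d(v_3,v_5)=d(v_4,v_5)-1$ by $d(v_3,v_5)=d(v_4,v_5)$, since that equality was only ever used via Lemma~\ref{le:tanaka}, whose modified counterpart is Lemma~\ref{le:tanakamodified}. So no genuine new difficulty arises, and the present lemma reduces, as claimed, to "a consequence of parts (ii) and (iii) of Lemma~\ref{le:thetamodified}."
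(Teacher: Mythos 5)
Your proposal is correct and matches the paper's own route: the paper proves Lemma~\ref{le:tanaka2} as ``a consequence of parts (ii) and (iii) of Lemma~\ref{le:theta}'' and then declares the modified case analogous, which is exactly the reduction you carry out (with the details of why (ii) forces $\gamma\ge2$ and (iii) forces $\alpha,\beta\ge2$ spelled out more explicitly than in the paper). No gap.
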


\begin{lemma}
If $\alpha,\beta\ge3$ are odd then $\Theta(\alpha,\beta,2)$ does not admit Tanaka quintuple.
\end{lemma}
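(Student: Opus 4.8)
The plan is to argue by contradiction. Suppose that $(v_1,v_2,v_3,v_4,v_5)$ is a Tanaka quintuple in $\Theta(\alpha,\beta,2)$ and write $\alpha=2k+1$, $\beta=2l+1$ with $k,l\ge1$, using the notation of Definition~\ref{Deftheta}. The first step is to locate $v_5$. Since $v_5$ is a vertex of the graph it lies on the interior of one of the three branches or at one of the two junction vertices, and I would eliminate every placement except $v_5=z_1$ by a parity count. Lemma~\ref{le:theta}~(i) states that if a Tanaka quintuple has $v_5$ on the distinguished branch, then the lengths of the other two branches sum to an even number; by the evident symmetry of the construction the same necessary condition applies with any branch in the distinguished role. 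For the three branches these sums are $\beta+2=2l+3$, $\alpha+2=2k+3$, and $\alpha+\beta=2k+2l+2$, of which only the last is even. Hence $v_5$ can lie only on the branch of length $2$, and part~(ii) of the same lemma, which asserts $0<k<\gamma=2$, both excludes the two junction vertices and forces $v_5=z_1$.

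With $v_5=z_1$ fixed, part~(iii) of Lemma~\ref{le:theta} puts $v_1,v_2,v_3,v_4$ on $P_{\alpha+1}\cup P_{\beta+1}$ and not all on a single branch; the edge and distance conditions \eqref{tanaka1}--\eqref{tanaka2} then let me assume $v_1=x_i$, $v_2=x_{i+1}$, $v_3=y_j$, $v_4=y_{j+1}$. Substituting $\alpha=2k+1$, $\beta=2l+1$, $\gamma=2$ into \eqref{eq:1lemma}, \eqref{eq:2lemma}, \eqref{eq:3lemma} turns them into $i+j=k+l$, $i\le k$, and $j\le l$; these three relations together force $i=k$ and $j=l$, so the quadruple is anchored symmetrically at the midpoints of the two long branches.

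Finally I would compute the four distances to $v_5=z_1$ along the two routes leaving $z_1$ (one through $z_0=x_0=y_0$, the other through $z_2=x_\alpha=y_\beta$), obtaining $d(v_1,v_5)=d(x_k,z_1)=k+1$ and $d(v_2,v_5)=d(x_{k+1},z_1)=k+1$, and likewise $d(v_3,v_5)=d(v_4,v_5)=l+1$ on the $y$-branch. But \eqref{tanaka3} requires $d(v_5,v_2)=d(v_5,v_1)+1$, which cannot hold when $d(v_5,v_1)=d(v_5,v_2)=k+1$; this contradiction finishes the proof. I expect the only subtle point to be the localisation of $v_5$ in the first step, where one must invoke the symmetric forms of Lemma~\ref{le:theta} for all three branches and check that its part~(ii) rules out the junction vertices; once $v_5=z_1$ is established the remaining distance bookkeeping on the two long branches is entirely routine.
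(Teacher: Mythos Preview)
Your proof is correct and follows essentially the same route as the paper's: localise $v_5$ at $z_1$, use \eqref{eq:1lemma}--\eqref{eq:3lemma} to pin down $i=k$, $j=l$, and then observe that $d(v_5,v_1)=d(v_5,v_2)$ contradicts \eqref{tanaka3}. If anything, your first step is more careful than the paper's: the paper simply asserts ``in view of Lemma~\ref{le:tanaka}, $v_5=z_1$'' without spelling out the parity argument across the three branches, whereas you make explicit that Lemma~\ref{le:theta}~(i), applied with each branch in the distinguished role, forces $v_5$ onto the short branch, and that part~(ii) then singles out $z_1$.
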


Now we are able to characterize theta graphs with modified Tanaka quintuple.

\begin{theorem}\label{thetawithmodifiedtanaka}
    Assume that $\alpha,\beta,\gamma\ge 2$. If either
    \begin{itemize}
        \item $\alpha,\beta$ are even, $\gamma$ is odd, or
        \item $\alpha,\beta$ are odd, $\gamma\ge 4$ is even,
    \end{itemize}
    then $\Theta(\alpha,\beta,\gamma)$ admits modified Tanaka quintuple, with $v_5\in P_{\gamma+1}$, cf. Definition~\ref{Deftheta} for notation.
\end{theorem}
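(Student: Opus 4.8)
The plan is to prove Theorem~\ref{thetawithmodifiedtanaka} by an \emph{explicit construction} of the modified Tanaka quintuple $(v_1,v_2,v_3,v_4,v_5)$ in each of the two cases, exactly mirroring the construction used in the proof of the Proposition in Section~\ref{tgwt} for the ordinary Tanaka quintuple. The key structural observation is that, since we want $v_5\in P_{\gamma+1}$ and since the quintuple sits on the cycle $P_{\alpha+1}\cup P_{\beta+1}$ (which by Lemma~\ref{le:theta}(iii), valid for modified quintuples as well, is forced to have even length), the parity condition ``$\alpha+\beta$ even'' is met precisely in the two listed cases: $\alpha,\beta$ both even, or $\alpha,\beta$ both odd. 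So I would split along these two subcases and in each place $v_1=x_i$, $v_2=x_{i+1}$ on $P_{\alpha+1}$, $v_3=y_j$, $v_4=y_{j+1}$ on $P_{\beta+1}$, and $v_5=z_k$ on $P_{\gamma+1}$, choosing $i,j,k$ so that $2i+2j+2=\alpha+\beta$ (placing the edges diametrically opposite across the cycle) and $k$ at the appropriate distance along the $\gamma$-path.

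Concretely, first I would treat $\alpha=2l+2$, $\beta=\alpha+2s=2s+2l+2$ with $\gamma=2p+1$ odd. I would try $v_1:=x_l$, $v_2:=x_{l+1}$, $v_3:=y_{s+l+1}$, $v_4:=y_{s+l+2}$, $v_5:=z_p$ (the same assignment as in the even-$\gamma$ Tanaka case in the Proposition, now with $\gamma$ odd so that $d(v_5,v_3)=d(v_5,v_4)$ instead of differing by one). Then I would verify \eqref{tanaka4}–\eqref{tanaka6}: adjacency \eqref{tanaka4} is immediate; \eqref{tanaka5} follows from $2i+2j+2=\alpha+\beta$ together with the inequalities \eqref{eq:2lemma}, \eqref{eq:3lemma} (which guarantee that the geodesics between the $x$'s and $y$'s go ``the short way'' around, through $x_0=y_0$, and do not shortcut through $P_{\gamma+1}$); and the new condition \eqref{tanaka6} requires computing $d(z_p,x_l)$, $d(z_p,x_{l+1})$, $d(z_p,y_{s+l+1})$, $d(z_p,y_{s+l+2})$ by comparing the two routes out of $z_p$ (via $z_0$ toward the $x$-side and via $z_\gamma$ toward the $y$-side) and checking that the distance to $v_2$ exceeds that to $v_1$ by exactly $1$ while the distances to $v_3$ and $v_4$ coincide — this is where the oddness of $\gamma$ enters, making the ``up'' and ``down'' routes to $y_{s+l+1}$ and $y_{s+l+2}$ balance. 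For the boundary case $\gamma=1$ I note this is excluded since $\gamma\ge2$ is assumed; for small $l$ (e.g.\ $l=0$, so $v_1=x_0$ is the branch vertex) the same formulas still work and I would just remark on it.

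For the second subcase, $\alpha=2l+3$, $\beta=\alpha+2s=2s+2l+3$ both odd, with $\gamma=2p$ even and $\gamma\ge4$, I would take $v_1:=x_l$, $v_2:=x_{l+1}$, $v_3:=y_{s+l+2}$, $v_4:=y_{s+l+3}$, $v_5:=z_p$, again checking \eqref{tanaka4}–\eqref{tanaka6} by the same distance bookkeeping; here the hypothesis $\gamma\ge4$ (rather than merely $\gamma\ge2$) is what I would expect to be genuinely needed, because for $\gamma=2$ the path $P_{\gamma+1}$ is too short for $v_5=z_1$ to achieve the required strict inequality $d(v_5,v_2)=d(v_5,v_1)+1$ simultaneously with $d(v_5,v_3)=d(v_5,v_4)$ — I would point out the analogous degeneracy ($d(v_1,v_3)=d(v_1,v_4)$ collapsing, or the routes through $z$ becoming competitive) exactly as the Proposition flagged the failure at $\gamma=3$ in its setting.

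The main obstacle, as usual in these constructions, is the distance computations in \eqref{tanaka6}: one must be careful that a geodesic from $z_p$ to, say, $y_{s+l+2}$ does not prefer the route through $P_{\alpha+1}$ over the route through $z_0=y_0$, and likewise that geodesics from $z_p$ to $x_l$ and $x_{l+1}$ genuinely go through $z_0$ and not around via $z_\gamma$ and $P_{\beta+1}$. These are finitely many case distinctions driven by comparing $p$ with $l$, $s$, and the relevant partial sums, and I expect them to go through cleanly given the constraints $\alpha\le\beta\le\gamma$ and the parity assumptions, but they require the same patient verification as in the proof of the Proposition — I would write ``it is easy to verify that $(v_1,v_2,v_3,v_4,v_5)$ is a modified Tanaka quintuple,'' after exhibiting the four key distances, just as the earlier proof does. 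The payoff is that, combined with Theorem~\ref{modifiedtanaka} and Theorem~\ref{th:thetawith5t}, this settles the non-QE side of the theta-graph classification in all remaining non-bipartite cases.
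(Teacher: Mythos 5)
Your overall strategy is exactly the paper's (explicit quintuples in each parity case, verified by distance bookkeeping), and your first case is correct: for $\alpha=2l+2$, $\beta=2s+2l+2$, $\gamma=2p+1$ the quintuple $(x_l,x_{l+1},y_{s+l+1},y_{s+l+2},z_p)$ does satisfy \eqref{tanaka4}--\eqref{tanaka6}, since $d(z_p,y_{s+l+1})=p+s+l+1$ (via $z_0$) equals $d(z_p,y_{s+l+2})=p+s+l+1$ (via $z_\gamma$). The second case, however, fails as stated. With $\alpha=2l+3$, $\beta=2s+2l+3$, $\gamma=2p$ and your choice $v_3=y_{s+l+2}$, $v_4=y_{s+l+3}$, $v_5=z_p$, one computes $d(z_p,y_{s+l+2})=\min\{p+s+l+2,\;p+s+l+1\}=p+s+l+1$ and $d(z_p,y_{s+l+3})=\min\{p+s+l+3,\;p+s+l\}=p+s+l$, so $d(v_5,v_3)=d(v_5,v_4)+1$ rather than $d(v_5,v_3)=d(v_5,v_4)$: you have written down an ordinary Tanaka-type relation, not the modified one. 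Concretely, in $\Theta(3,3,4)$ your quintuple is $(x_0,x_1,y_2,y_3,z_2)$ and $d(z_2,y_2)=3\ne 2=d(z_2,y_3)$, so \eqref{tanaka6} fails.

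This is not a mere index slip that self-corrects in the verification: the condition $d(z_m,y_j)=d(z_m,y_{j+1})$ forces the geodesic to switch routes between $j$ and $j+1$, which amounts to $2m+2j=\beta+\gamma-1$; combined with the necessary condition $2i+2j+2=\alpha+\beta$ of \eqref{eq:1lemmamodified} and your placement $i=l$, this forces $m=\gamma/2-1=p-1$. So with $v_1=x_l$, $v_2=x_{l+1}$ you must take $v_5=z_{p-1}$, one step off the midpoint of $P_{\gamma+1}$ --- and $z_{p-1}$ is a legitimate interior vertex only when $p\ge2$, i.e.\ $\gamma\ge4$, which is the true role of that hypothesis (your heuristic explanation of why $\gamma\ge4$ is needed is off the mark). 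The paper's proof makes exactly this adjustment by writing $\gamma=2p+2$ and taking $v_5=z_p=z_{\gamma/2-1}$ together with $v_1=x_{k-1}$, $v_2=x_k$, $v_3=y_{l+1}$, $v_4=y_{l+2}$ for $\alpha=2k+1$, $\beta=2l+1$. Once $v_5$ is repositioned, the rest of your verification plan goes through.
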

\begin{proof}
    First assume that $\alpha=2k$, $\beta=2l$, $k,l\ge 1$. If $\gamma=2p+1$, $p\ge 1$, then we put
    \[
v_1:=x_{k},\quad v_2:=x_{k+1},\quad v_3:=y_{l},\quad v_4:=y_{l+1},\quad v_5:=z_{p}.
\]
    It is easy to verify that $(v_1,v_2,v_3,v_4,v_5)$ is a modified Tanaka quintuple.

    Now assume that $\alpha=2k+1$, $\beta=2l+1$, $k,l\ge 1$. If $\gamma=2p+2$, $p\ge 1$, then we put
    \[
v_1:=x_{k-1},\quad v_2:=x_{k},\quad v_3:=y_{l+1},\quad v_4:=y_{l+2},\quad v_5:=z_{p},
\]
and, again $(v_1,v_2,v_3,v_4,v_5)$ becomes a modified Tanaka quintuple.
\end{proof}

\section{Proofs of Theorem~\ref{thm:thetaqe} and Theorem~\ref{thm:thetanonqe}}\label{se:proofs}

Now we are ready to prove Theorem~\ref{thm:thetaqe} and Theorem~\ref{thm:thetanonqe}.

\begin{proof}[Proof of Theorem~\ref{thm:thetaqe}.]
First assume that $\alpha=1$, $\beta=2$, $\gamma=n-1\ge2$. Then
\[
d(z_i,z_j)=\min\{|i-j|,n-|i-j|\},\qquad
d(z_i,y_1)=\min\{i+1,n-i\}.\]
Let $f:V\to\mathbb{R}$ be a function satisfying $\sum_{x\in V}f(x)=0$.
Then putting $f(z_i)=u_{i+1}$ we have
\begin{multline*}
\sum_{x,y\in V}d(x,y)f(x)f(y)=\sum_{i,j=1}^{n}\min\{|i-j|,n-|i-j|\}u_i u_j\\
-\sum_{i=1}^{n}\min\{i,n+1-i\}u_i u_n
-\sum_{j=1}^{n}\min\{j,n+1-j\}u_n u_j
=-\sum_{i,j=1}^{n} a^{n}_{i,j}u_i u_j,
\end{multline*}
where
\begin{equation}\label{eq:theta12a}
a^{n}_{i,j}=\min\{i,n+1-i\}+\min\{j,n+1-j\}-\min\{|i-j|,n-|i-j|\}.
\end{equation}
Now it suffices to prove that the matrix $A_n:=(a^{n}_{i,j})_{i,j=1}^{n}$ is (weakly) positive definite.
For example:
\[
A_6=
\left(
\begin{array}{cccccc}
 2 & 2 & 2 & 1 & 1 & 1 \\
 2 & 4 & 4 & 3 & 1 & 1 \\
 2 & 4 & 6 & 5 & 3 & 1 \\
 1 & 3 & 5 & 6 & 4 & 2 \\
 1 & 1 & 3 & 4 & 4 & 2 \\
 1 & 1 & 1 & 2 & 2 & 2
\end{array}
\right),\qquad
A_7=\left(
\begin{array}{ccccccc}
 2 & 2 & 2 & 2 & 1 & 1 & 1 \\
 2 & 4 & 4 & 4 & 2 & 1 & 1 \\
 2 & 4 & 6 & 6 & 4 & 2 & 1 \\
 2 & 4 & 6 & 8 & 6 & 4 & 2 \\
 1 & 2 & 4 & 6 & 6 & 4 & 2 \\
 1 & 1 & 2 & 4 & 4 & 4 & 2 \\
 1 & 1 & 1 & 2 & 2 & 2 & 2
\end{array}
\right).
\]

For $1\le p\le q\le n$ put $\mathbf{J}^n(p,q)=\left(\mathbf{J}^{(p,q)}_{i,j}\right)_{i,j=1}^{n}$, where $\mathbf{J}^{(p,q)}_{i,j}=1$
if $p\le i,j\le q$, and $\mathbf{J}^{(p,q)}_{i,j}=0$ otherwise. Note that each $\mathbf{J}^n(p,q)$ is positive definite.

One can check that
\[
A_6=\mathbf{J}^{6}(1,6)+\mathbf{J}^{6}(1,3)+\mathbf{J}^{6}(4,6)+2\mathbf{J}^{6}(2,4)+2\mathbf{J}^{6}(3,5),
\]
\begin{multline*}
\phantom{aaaaaaaaa}
A_7=\mathbf{J}^{7}(1,7)+\mathbf{J}^{7}(1,4)+\mathbf{J}^{7}(2,5)+\mathbf{J}^{7}(3,6)+\mathbf{J}^{7}(4,7)\\
+\mathbf{J}^{7}(2,4)+\mathbf{J}^{7}(3,5)+\mathbf{J}^{7}(4,6).\phantom{aaaaaa}
\end{multline*}
In general, we claim that
\begin{equation}\label{eq:theta12even}
A_{2k}=\mathbf{J}^{2k}(1,2k)+\mathbf{J}^{2k}(1,k)+\mathbf{J}^{2k}(k+1,2k)+2\sum_{p=2}^{k}\mathbf{J}^{2k}(p,p+k-1),
\end{equation}
\begin{equation}\label{eq:theta12odd}
A_{2k+1}=\mathbf{J}^{2k+1}(1,2k+1)+\sum_{p=1}^{k+1}\mathbf{J}^{2k+1}(p,p+k)+\sum_{p=2}^{k+1}\mathbf{J}^{2k+1}(p,p+k-1).
\end{equation}

Denote the right hand side of (\ref{eq:theta12even},\ref{eq:theta12odd}) by $B_n=\left(b^{n}_{i,j}\right)_{i,j=1}^{n}$
and observe that
\begin{equation}\label{eq:theta12sym}
a^{n}_{i,j}=a^{n}_{j,i}=a^{n}_{n+1-i,n+1-j},\qquad
b^{n}_{i,j}=b^{n}_{j,i}=b^{n}_{n+1-i,n+1-j}
\end{equation}

Let $n=2k$. If $1\le i,j\le k$ then $a^{2k}_{i,j}=2\min\{i,j\}=b^{2k}_{i,j}$.
Now assume that $1\le i\le k$, $k+1\le j\le 2k$. Then
\[
a^{2k}_{i,j} 
=\left\{\begin{array}{ll}
1&\hbox{if $k\le j-i$,}\\
2k+1+2i-2j&\hbox{otherwise.}
\end{array}\right.
\]
On the other hand note that $\mathbf{J}^{(p,p+k-1)}_{i,j}=1$ if and only if $j-k<p\le i$,
hence on the right hand side of (\ref{eq:theta12even}) we obtain
\[
b^{2k}_{i,j}=1+2\max\{0,k+i-j\}
\]
and this equals $=a^{2k}_{i,j}$.
The remaining cases follow from (\ref{eq:theta12sym}), therefore $A_{2k}=B_{2k}$.

Now assume that $n=2k+1$.
Assume that $1\le i,j\le k+1$, $i+j\le 2k+1$.
Then $a^{2k+1}_{i,j}=2\min\{i,j\}=b^{2k+1}_{i,j}$.

Now assume that $1\le i\le k+1$, $k+2\le j\le 2k+1$.
\[
a^{2k+1}_{i,j} 
=\left\{\begin{array}{ll}
1&\hbox{if $k< j-i$,}\\
2k+2+2i-2j&\hbox{otherwise.}
\end{array}\right.
\]
On the right hand side of (\ref{eq:theta12odd}) we obtain
\[
b^{2k+1}_{i,j}=1+\max\{0,k+i-j+1\}+\max\{0,k+i-j\},
\]
which is exactly $a^{2k+1}_{i,j}$.
Again we apply (\ref{eq:theta12sym}) to conclude that $A_{2k+1}=B_{2k+1}$.

Summing up, for each $n\ge3$ the matrix $A_n$ is positive definite as a sum of positive definite matrices.
This concludes this part of proof.

For $\alpha=1$, $\beta=3$ the statement is a consequence of Proposition~3.15 in \cite{ObataPrim6v}.

Finally, if $\alpha=1$ and $\beta,\gamma$ are odd then the statement is a consequence
of Lemma~\ref{le:thetabipartite}, Theorem~\ref{pro:tanaka} and Theorem~\ref{th:thetawith5t}.
\end{proof}

For the proof of Theorem~\ref{thm:thetanonqe} we need a lemma.

\begin{lemma}\label{2oddodd}
If $k,l\ge2$ then $\Theta(2,2k+1,2l+1)$ is of non-$QE$ class.
\end{lemma}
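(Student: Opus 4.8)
The plan is to exhibit an explicit negative-type witness, i.e.\ a function $f\colon V\to\mathbb R$ with $\sum_x f(x)=0$ and $\sum_{x,y} d(x,y)f(x)f(y)>0$, where $V$ is the vertex set of $\Theta(2,2k+1,2l+1)$. Since $2$, $2k+1$, $2l+1$ are not all of the same parity, Lemma~\ref{le:thetabipartite} tells us this graph is not bipartite, so neither Tanaka's theorem nor the (ordinary or modified) quintuple machinery of the previous sections applies directly; in fact Theorem~\ref{th:thetawith5t} and Theorem~\ref{thetawithmodifiedtanaka} both fail to produce a quintuple here (the parities are wrong: $\alpha=2$ even but $\beta,\gamma$ odd, and $\alpha$ is not $\ge4$). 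So a hands-on construction seems unavoidable, and the natural candidate is to concentrate the mass of $f$ near the two ``branch vertices'' $x_0=y_0=z_0$ and $x_2=y_{2k+1}=z_{2l+1}$, which is where the non-bipartiteness (the odd cycle $x_0x_1x_2y_{2k}\cdots y_0$ of length $2k+3$) is felt.

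First I would set up the distance function explicitly. Write $a=x_0=y_0=z_0$ and $b=x_2=y_{2k+1}=z_{2l+1}$, and note $d(a,b)=2$ (through the $\alpha$-path), $d(a,x_1)=d(b,x_1)=1$, while distances from $a$ or $b$ to interior vertices of the long paths grow linearly and are governed by $\min$'s with the length-$2$ shortcut. Then I would try the simplest nonzero centered function supported on $\{a,b,x_1\}$ and one more vertex to break a symmetry — concretely something like $f(x_1)=c_1$, $f(a)=f(b)=c_2$, $f(v^\ast)=c_3$ for a single interior vertex $v^\ast$ of one long path (say $y_1$ or $y_k$), with $c_1+2c_2+c_3=0$, and compute $Q(f)=\sum d(x,y)f(x)f(y)$ as a quadratic form in two free parameters. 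The point $x_1$ is special because $d(x_1,a)=d(x_1,b)=1$ whereas $d(a,b)=2<1+1$ fails to be additive — this ``defect'' is exactly what a quadratic embedding cannot tolerate, and it should contribute a positive term. I expect the computation to reduce to showing a small $2\times2$ or $3\times3$ matrix (entries polynomial in $k,l$) has a positive value on the hyperplane $\mathbf 1^\top f=0$, which for $k,l\ge2$ should hold; the condition $k,l\ge2$ presumably enters to ensure the relevant distances are large enough that the $\min$'s with the shortcut behave linearly and the sign works out.

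An alternative, possibly cleaner route — and the one I would fall back on if the ad hoc $f$ is messy — is to find an isometrically embedded subgraph of $\Theta(2,2k+1,2l+1)$ that is already known to be non-QE. A prime shape: the odd cycle $C_{2k+3}$ formed by $x_0,x_1,x_2,y_{2k},\ldots,y_1$ together with a pendant path, or more promisingly the graph obtained by deleting the far end of the $z$-path, which is $\Theta(2,2k+1,*)$ with a tail — but one must check isometry carefully, which is the real subtlety: in $\Theta$-graphs a subpath of one branch need not be isometrically embedded once the other branches provide shortcuts. So the main obstacle in both approaches is the same: controlling the graph metric of $\Theta(2,2k+1,2l+1)$ precisely enough — i.e.\ identifying for each pair of vertices which of the (up to three) routes is geodesic — and then verifying that the chosen witness $f$ (or the chosen isometric subgraph) genuinely sees the odd-cycle obstruction. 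Once the metric is pinned down, the positivity of $Q(f)$ should be a one-line polynomial inequality in $k,l$, valid exactly when $k,l\ge2$, which is why I would invest the effort up front in a clean case analysis of $d(\cdot,\cdot)$ rather than in the final algebra.
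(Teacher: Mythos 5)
Your overall strategy (exhibit an explicit centered $f$ with $f^\top Df>0$) is indeed the one the paper uses, but the specific witness you propose cannot work, and your fallback cannot work either. The witness you describe is supported on four vertices ($x_0$, $x_1$, $x_2$ and one interior vertex $v^\ast$). Every metric space on at most four points is $\ell_1$-embeddable (the semimetric cone equals the cut cone for $n\le 4$), hence of negative type; consequently the restriction of $D$ to \emph{any} four vertices is conditionally negative semidefinite, and $f^\top Df\le 0$ for every centered $f$ with such small support --- no choice of $v^\ast$ or of the coefficients can produce a positive value. (Concretely, with $v^\ast=y_k$ one computes $\tfrac12 f^\top Df=-2c_2^2-3c_2c_3-(k+1)c_3^2$, which is negative definite for all $k\ge1$; the case $v^\ast=y_1$ is similar.) The ``defect'' you point to is also illusory: $d(x_0,x_2)=2=d(x_0,x_1)+d(x_1,x_2)$, so the triangle through $x_1$ is exactly additive and contributes nothing positive. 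Since the smallest non-QE graphs have five vertices, any successful witness must involve at least five; the paper's witness is in fact supported on seven vertices, namely $x_1$ together with the three pairs $(y_1,z_{2l})$, $(y_{k+1},z_l)$, $(y_{k+2},z_{l-1})$, each pair carrying a common value ($u_1=2k+2l-4$, $u_2=11-4k-4l$, $u_3=4k+4l-13$, and $f(x_1)=-2u_1-2u_2-2u_3$), and the computation collapses to $f^\top Df=2(4k+4l-13)\ge 6>0$ for $k,l\ge2$.

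Your fallback --- finding an isometrically embedded non-QE subgraph --- is ruled out in principle: Theorem~\ref{thm:primarynonqe} shows that every proper connected subgraph of a theta graph is a path, a cycle, or a star product of such, hence of QE class; in particular $\Theta(2,2k+1,2l+1)$ is \emph{primary} non-QE and contains no non-QE proper connected subgraph, isometric or otherwise. So neither route in your proposal can be completed, and what is missing is precisely the nontrivial content of the lemma: a witness spread over all three branches, with coefficients tuned to the lengths $k$ and $l$.
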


\begin{proof}
Put
\[
f(y_1)=f(z_{2l})=u_1,\quad
f(y_{k+1})=f(z_{l})=u_2,\quad
f(y_{k+2})=f(z_{l-1})=u_3
\]
\[
f(x_1)=u_4=-2u_1-2u_2-2u_3,
\]
with
\[
u_1=2k+2l-4,\quad
u_2=11-4k-4l,\quad
u_3=4k+4l-13,
\]
and $f(x)=0$ for all other vertices. Then
\begin{multline*}
\sum_{x,y\in V}d(x,y)f(x)f(y)=2u_1\big(4u_1+(k+l+1)u_2+(k+l+1)u_3+2u_4\big)\\
+u_2\big(2(k+l+1)u_1+2(k+l+1)u_2+2(k+l+1)u_3+(k+l+2)u_4\big)\\
+u_3\big(2(k+l+1)u_1+2(k+l+1)u_2+2(k+l)u_3+(k+l)u_4\big)\\
+u_4\big(4u_1+(k+l+2)u_2+(k+l)u_3\big)\\
=2(k+l-3)u_1\big(u_2+u_3\big)
-2u_2\left(u_1+u_2+u_3\right)+2u_3\big(u_1+u_2\big)\\
-2(u_1+u_2+u_3)\big(4u_1+(k+l+2)u_2+(k+l)u_3\big)\\
=-8(k+l-3)(k+l-2)+4(4k+4l-11)(k+l-3)\\
-2(4k+4l-13)(2k+2l-7)+8(k+l-3)^2\\
=(k+l-3)(16k+16l-52)-2(4k+4l-13)(2k+2l-7)\\
=2(4k+4l-13)\ge6>0.
\end{multline*}
This concludes the proof.
\end{proof}

\begin{proof}[Proof of Theorem~\ref{thm:thetanonqe}.]
Assume that $1\le\alpha\le\beta\le\gamma$, $\beta\ge2$.

If either $\alpha=2$, $\beta,\gamma$ are even, or $\alpha=3$, $\beta,\gamma$ are odd, or $\alpha\ge4$
then, by Theorem~\ref{th:thetawith5t}, $\Theta(\alpha,\beta,\gamma)$ admits a Tanaka quintuple,
hence is of non-$QE$ class in view of the first part of Theorem~\ref{pro:tanaka}.
If $\alpha=3$ and $\beta$ is even or $\gamma$ is even then, by Theorem~\ref{thetawithmodifiedtanaka}, $\Theta(\alpha,\beta,\gamma)$
admits a modified Tanaka quintuple, and hence, by Theorem~\ref{modifiedtanaka} is of non-$QE$ class.
Similarly, if $\alpha=2$, $\beta$ is even, $\gamma$ is odd, or  $\alpha=2$, $\beta$ is odd, $\gamma$ is even.
The remaining case, when $\alpha=2$ and $\beta,\gamma\ge5$ are odd, is covered by Lemma~\ref{2oddodd}.
\end{proof}

\begin{example}\label{examples}
Let us record some known examples:
\begin{align*}
\mathrm{QEC}\big(\Theta(1,2,2)\big)&=\frac{-1}{2},\\ 
\mathrm{QEC}\big(\Theta(1,2,3)\big)&=0,\\ 
\mathrm{QEC}\big(\Theta(2,2,2)\big)&=\frac{2}{5},\\ 
\mathrm{QEC}\big(\Theta(1,2,4)\big)&=\lambda_0=-0.3121\ldots,\
\mathrm{QEC}\big(\Theta(1,3,3)\big)&=0,\\ 
\mathrm{QEC}\big(\Theta(2,2,3)\big)&=\frac{\sqrt{19}-4}{3}>0,\\ 
\end{align*}
where $\lambda_0$ is the largest root of the equation $3\lambda^3+15\lambda^2+14\lambda+3=0$
(see $K_4\setminus\{e\}$, $K_4\setminus P_5$ in \cite{Obata3},
and G5-10, G6-32, G6-35, G6-30 in \cite{ObataPrim6v}). 
\end{example}

\begin{example}\label{example2}
    We present the values of $\QEC$ for theta graphs on 7, 8, 9 and 10 vertices, which are not mentioned in Theorems~\ref{thm:thetaqe} and \ref{thm:thetanonqe}, see Table \ref{78910vertices}. Note that the values are non-positive, thus Conjecture~\ref{conj:thetaqe} is true for 7, 8, 9 and 10 vertices.  
\begin{table}[h]
        \centering
        \begin{tabular}{|c|c|c|}\hline
         $\QEC(\Theta(2,3,3))$ & 
         $\QEC(\Theta(1,4,4))$&
         $\QEC(\Theta(1,4,5))$  
           \\ \hline
           0 &$\approx -0.1569$ & 0 \\\hline
           \hline
          $\QEC(\Theta(2,3,5))$ &
          $\QEC(\Theta(1,4,6))$&
           $\QEC(\Theta(1,5,5))$
          \\ \hline
          0&$\approx -0.1240$ &
          0
          
          \\ \hline
        \end{tabular}
        \caption{$\QEC$ of theta graphs on 7, 8, 9 and 10 vertices, which are not mentioned in Theorems~\ref{thm:thetaqe} and \ref{thm:thetanonqe}} 
        \label{78910vertices}
    \end{table}

\end{example}

\section{Answer to the main question}\label{spnonqe}

A connected graph is called a \textit{primary non-$QE$} if it is of non-QE class and every isometrically embedded connected
subgraph of $G$ if of QE-type.
Some examples of primary non-QE graphs were studied by Obata \cite{ObataPrim6v}.
Now we will see that there are infinitely many of them.
Namely, we observe that every theta graph which is not of QE-type is primary non-QE in a strong sense.

\begin{theorem}\label{thm:primarynonqe}
If $H$ is a proper connected subgraph of a theta graph
then $H$ is of QE type.
Consequently, every theta graph of non-QE type is primary non-QE.
\end{theorem}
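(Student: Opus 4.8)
The plan is to show that every proper connected subgraph $H$ of a theta graph $\Theta(\alpha,\beta,\gamma)$ is of QE type, and then the ``consequently'' clause is immediate: if $\Theta(\alpha,\beta,\gamma)$ is of non-QE class and every proper connected subgraph (in particular every isometrically embedded one) is of QE class, it is primary non-QE by definition.

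\textbf{Step 1: Reduce to maximal proper subgraphs.} It suffices to treat connected subgraphs $H$ obtained from $\Theta(\alpha,\beta,\gamma)$ by deleting a single edge (and possibly isolated vertices), because any smaller connected subgraph is itself a subgraph of such a one-edge-deleted graph, and the QE property passes to subgraphs only in the isometric sense — so I actually need to be a little careful and instead argue that any proper connected subgraph is contained in, hence equals after pruning, one of finitely many explicitly described graphs, each of which I show to be QE. The cleaner route: classify the connected subgraphs of $\Theta(\alpha,\beta,\gamma)$ directly.

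\textbf{Step 2: Classify connected subgraphs.} A connected subgraph $H$ of $\Theta(\alpha,\beta,\gamma)$ that is proper must omit at least one edge. Removing an edge from one of the three internal paths $P_{\alpha+1}, P_{\beta+1}, P_{\gamma+1}$ breaks that path; since the other two paths still connect the two branch vertices $x_0$ and $x_\alpha$, the resulting graph (after discarding the now-disconnected stub, if $H$ is to be connected and contain both branch vertices) is either a cycle $C_{\beta+\gamma}$ with two pendant paths attached at a single point, or — if we also keep part of the broken path — a cycle with a pendant path (a ``tadpole''/``lollipop'' graph), or just a path $P_k$. In all cases, $H$ is, after this analysis, of one of the following types: a path $P_k$; a cycle $C_k$; a cycle with one pendant path attached at a vertex (a star product $C_k \star P_j$); or a cycle with a pendant path where the path is further branched — but one checks this last case cannot occur from a single theta graph. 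So every proper connected subgraph is $P_k$, $C_k$, or $C_k \star P_j$.

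\textbf{Step 3: Each type is QE.} Paths $P_k$ are QE with $\QEC(P_k) = -1/(1+\cos(\pi/k)) < 0$, cited from \cite{Mlo1}. Even cycles $C_{2k}$ are QE (they are bipartite and isometrically embed in a hypercube — indeed $C_{2k}$ has $\QEC = 0$, see \cite{Obata3}); odd cycles $C_{2k+1}$ are QE as well — this needs checking (Obata computed $\QEC(C_n)$; for $n$ odd it is still $\le 0$, cf. \cite{Obata3}). Finally, a star product of two QE graphs is QE by the result recalled in the Preliminaries (\cite{Obata3}), so $C_k \star P_j$ is QE. This finishes the proof.

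\textbf{Main obstacle.} The delicate point is Step 2: I must be sure the case analysis of which connected subgraphs arise is complete and correct, in particular that one never obtains a graph with a branch vertex of degree $3$ sitting off a cycle (which would not obviously be a star product of QE graphs), and that ``pruning isolated vertices'' is legitimate here since isometric embeddability is what matters for the primary notion while plain QE-ness is what matters for the subgraph itself. I expect the cleanest write-up enumerates: deleting an edge of the unique (up to the three choices) cycle structure, the surviving graph deformation-retracts onto either a path, a single even/odd cycle, or a cycle-with-tail, and then invokes the three QE facts above. The only genuinely new verification needed beyond citations is that odd cycles are of QE class, which either follows from \cite{Obata3} directly or from a short explicit computation of $\QEC(C_{2k+1})<0$.
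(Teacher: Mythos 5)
Your overall strategy coincides with the paper's: classify the proper connected subgraphs of $\Theta(\alpha,\beta,\gamma)$ and check that each type is of QE class using known results on paths, cycles and star products. However, your classification in Step 2 is incomplete, and this is a genuine gap. First, you drop the tree case almost entirely: a proper connected subgraph need not contain any cycle, and a subtree of a theta graph need not be a path. For example, in $\Theta(2,2,2)$ the vertex $x_0$ together with its three neighbours spans a $K_{1,3}$; more generally one gets spiders with three legs at $x_0$ (or $x_\alpha$), and even trees with \emph{two} degree-$3$ vertices (keep one of the three paths entirely and an initial and a final segment of each of the other two). None of these is a $P_k$, a $C_k$, or a $C_k\star P_j$. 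Second, in the unicyclic case the leftover pieces of the third path attach at the two distinct branch vertices $x_0$ and $x_\alpha$, not ``at a single point'' as you write, so one also gets a cycle with two pendant paths at two different vertices, which again is absent from your final list ``$P_k$, $C_k$, or $C_k\star P_j$.''

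The theorem survives because all the omitted graphs are still iterated star products of QE graphs: every subtree of a theta graph is an iterated star product of paths (indeed every tree is of QE class, being isometrically embeddable in a hypercube), and the cycle with two tails is $(C_k\star P_i)\star P_j$. This is exactly how the paper phrases its one-line proof: every proper connected subgraph is a path, a cycle, a star product of paths, a star product of a cycle and a path, or a star product of a cycle and two disjoint paths, all of QE class by \cite{Jak2,Mlo1,Obata3}. So your argument is repairable by completing the enumeration and invoking the star-product closure twice where needed; as written, though, the case analysis would not pass. (Your residual worry about odd cycles is unfounded: $\QEC(C_{2k+1})<0$ is in \cite{Obata3}, and the paper simply cites it.)
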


\begin{proof}Observe that each subgraph of theta graph is either  a cycle, or a path, or a star products of 2 paths, or a  star product of a cycle and a path, or a star product of a cycle and two disjoint paths. All such  graphs are of QE-class, see \cite{Jak2, Mlo1, Obata3}.  
\end{proof}

Combining Theorem~{5.2} with Theorem~\ref{thm:primarynonqe} we obtain

\begin{corollary}\label{cor:primaryinfinite}
The class of all primary non-QE graphs is infinite. 
\end{corollary}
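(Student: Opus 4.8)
The plan is to obtain the corollary by assembling two results already established. Corollary~\ref{nonqeforeachn} furnishes, for every $n\ge 5$, a theta graph $\Theta(\alpha,\beta,\gamma)$ on exactly $n$ vertices that is of non-QE class, while Theorem~\ref{thm:primarynonqe} asserts that any theta graph of non-QE type is in fact primary non-QE. Chaining these two statements produces, for each $n\ge 5$, at least one primary non-QE graph of order $n$, and distinct orders give non-isomorphic graphs, so the family of primary non-QE graphs is infinite.

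Concretely, I would fix $n\ge 5$ and use Corollary~\ref{nonqeforeachn} to choose a triple $(\alpha,\beta,\gamma)$ with $\alpha+\beta+\gamma-1=n$ such that $\Theta(\alpha,\beta,\gamma)$ is of non-QE class; by Theorem~\ref{thm:primarynonqe} this graph is then primary non-QE. Letting $n$ range over all integers $\ge 5$ yields infinitely many pairwise non-isomorphic primary non-QE graphs, since two of them corresponding to different values of $n$ have different numbers of vertices. Hence the class of all primary non-QE graphs cannot be finite.

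I do not expect any genuine obstacle here: the substantive content has already been absorbed into Theorem~\ref{thm:thetanonqe} (which, via Theorems~\ref{th:thetawith5t} and \ref{thetawithmodifiedtanaka}, supplies the non-QE theta graphs for all the required parameter ranges) and into Theorem~\ref{thm:primarynonqe} (which rests on the classification of proper connected subgraphs of a theta graph as cycles, paths, or star products of such, all of QE class). The only point worth a moment's attention is that Corollary~\ref{nonqeforeachn} really does cover \emph{every} $n\ge 5$ rather than a sparse subset of values, but that is precisely what that corollary states, so nothing further is needed.
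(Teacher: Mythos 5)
Your argument is correct and is essentially the paper's own proof: the authors likewise combine the existence of a non-QE theta graph on every $n\ge 5$ vertices (Corollary~\ref{nonqeforeachn}, resting on Theorem~\ref{thm:thetanonqe}) with Theorem~\ref{thm:primarynonqe} to conclude that these graphs are all primary non-QE, and distinct vertex counts give infinitely many of them. Nothing is missing.
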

For closing the paper observe that every theta graph of non-QE type has a property that is even stronger than being primary non-QE. Namely,  
it contains no proper connected subgraph of non-QE class. Here by a subgraph of $G$ we understand a  graph $H$, such that the sets of vertices and edges of $H$ are contained in the sets of vertices and edges of $G$, respectively.

It appears that the theta graphs are the only bipartite graphs enjoying this property. The corresponding theorem is preceded by definition of \emph{subdivided wheel} and an example of a graph which is of QE class and has  subgraphs of type $\Theta(\alpha,\beta,\gamma)$ of non-QE class (hence, neither of  of them is isometrically embedded in $G$).

\begin{example}\label{subdivided}
First we define graphs $W_k$ and $W_k(m_1,\ldots,m_l;n_1,\ldots,n_k)$. The \emph{wheel} $W_k$ is the graph obtained as a join the graph $K_1$ and the cycle $C_k$. Let $u$ be the central vertex and $w_1,\ldots,w_k$ are the vertices of the remaining vertices of $W_k$. Let $W_k(m_1,\ldots,m_l;n_1,\ldots,n_k)$ be the graph obtaining by subdividing edges of $W_k$, where $m_i$ and $n_i$ are the number of vertices added on the edge $w_i w_{i+1}$ and $uw_i$, respectively.
Glavier, Klavzar and Mollard proved in \cite{Klavzar}  that for 
$k\ge3$ the graph $W_k(m_1,\ldots,m_l;$ $n_1,\ldots,n_k)$ is isometrically embeddable into a hypercube if and only if $m_i$ is odd for $i=1,\ldots,k$ and $n_1=\ldots n_k=0$, or $k=3$ and $m_1=m_2=m_3=n_1=n_2=n_3=1$. 

Consider now the graph $W_3(1,1,1;1,1,1)$ in Figure \ref{W3}. By result above it is isometrically embeddable into a hypercube, hence is of QE class. However, note that  it contains six subgraphs $\Theta(2,4,4)$.

\begin{figure}[h]
\includegraphics[width=150pt]{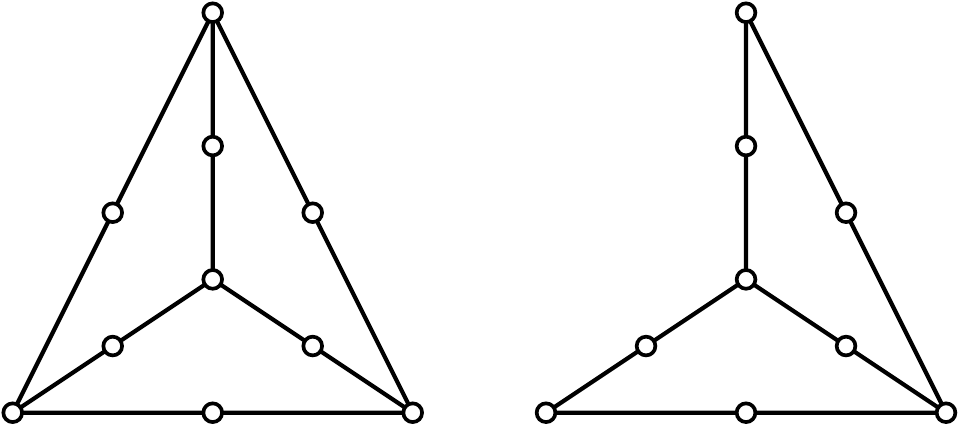}
\captionof{figure}{The graph $W_3(1,1,1;1,1,1)$ and its subgraph $\Theta(2,4,4)$}\label{W3}
\end{figure}
\end{example}

\begin{theorem}
    Let $G$ be a bipartite graph. Then the following are equivalent:
    \begin{enumerate}[\rm (i)]
\item $G$ is a non-QE graph and contains no proper connected subgraph   of non-QE class;
\item $G$ is a theta graph $\Theta(\alpha,\beta,\gamma)$, where $\alpha,\beta,\gamma \ge2$ are all even or all odd.
\end{enumerate}
\end{theorem}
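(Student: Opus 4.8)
The implication (ii)$\Rightarrow$(i) follows from Theorem~\ref{thm:thetanonqe} (together with Lemma~\ref{le:thetabipartite}, which tells us that a bipartite theta graph must have $\alpha,\beta,\gamma$ of the same parity) and from the remark after Theorem~\ref{thm:primarynonqe}: a bipartite theta graph of non-QE class contains no proper connected subgraph of non-QE class at all (not merely no isometrically embedded one), because every proper connected subgraph of a theta graph is a path, a cycle, or a star product of paths and cycles, all of which are of QE class. So the substantive direction is (i)$\Rightarrow$(ii).

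For (i)$\Rightarrow$(ii), I would argue by analysing the structure of a minimal bipartite non-QE graph $G$. First, $G$ must contain a cycle: a tree (indeed any forest) is a star product of paths, hence of QE class. Let $C$ be a shortest cycle in $G$; since $G$ is bipartite, $|C|$ is even. The plan is to show that $G$ is in fact a theta graph. The first step is to rule out pendant structures: if $G$ had a vertex of degree $1$, removing it would leave a connected proper subgraph which, being non-QE would be impossible by minimality, so it would have to be of QE class --- but then $G$ itself is a star product of that subgraph with $P_2$, hence of QE class, a contradiction. So $G$ has minimum degree $\ge 2$. Next, I would bound the number of vertices of degree $\ge 3$: if there were a vertex $v$ of degree $\ge 3$ together with a vertex $w\ne v$ of degree $\ge 3$ that is not joined to $v$ by three internally disjoint paths, or if there were three or more ``branch vertices,'' one can typically locate a proper connected subgraph of non-QE class (another, smaller theta graph, or a subdivided $K_4$-type configuration whose non-QE status is known from Theorem~\ref{thm:thetanonqe} applied to one of its theta subgraphs), contradicting minimality. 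This forces $G$ to have exactly two branch vertices, each of degree exactly $3$, joined by three internally disjoint paths --- i.e., $G=\Theta(\alpha,\beta,\gamma)$. Finally, since $G$ is bipartite, Lemma~\ref{le:thetabipartite} gives that $\alpha,\beta,\gamma$ have the same parity, and since $G$ is non-QE, Theorem~\ref{thm:thetaqe} excludes the case $\alpha=1$ with $\beta,\gamma$ odd; combined with the parity constraint this leaves exactly the all-even and all-odd cases (the case $\alpha=1$, $\beta,\gamma$ even is forbidden by parity; $\alpha=1$, $\beta$ or $\gamma$ even mixed is also forbidden by parity), which is (ii).

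The main obstacle I anticipate is the step that rules out more than two branch vertices (and forces the two branch vertices to have degree exactly $3$ and to be the endpoints of three internally disjoint paths covering all of $G$). One has to show that any bipartite graph with minimum degree $\ge 2$ that is \emph{not} a subdivided cycle and \emph{not} a theta graph contains a proper connected non-QE subgraph; the natural candidate for that subgraph is a theta graph sitting strictly inside $G$, whose non-QE status is then read off from Theorem~\ref{thm:thetanonqe}, but one must check that such a proper theta subgraph (with the right parity of all three path-lengths, or giving a non-QE verdict by one of the other cases of Theorem~\ref{thm:thetanonqe}) genuinely exists --- which requires a careful case analysis of how the branch vertices and connecting paths can be arranged, and in particular handling the parity of the three connecting path lengths so that Theorem~\ref{thm:thetanonqe} actually applies. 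Example~\ref{subdivided} shows this is delicate: the graph $W_3(1,1,1;1,1,1)$ is of QE class yet contains (non-isometric) copies of $\Theta(2,4,4)$, so one cannot simply extract \emph{any} theta subgraph; in the minimal-$G$ setting, though, the minimality hypothesis is exactly what lets us conclude, because we only need the extracted theta subgraph to be \emph{connected} and \emph{proper}, not isometrically embedded.
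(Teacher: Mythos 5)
Your treatment of (ii)$\Rightarrow$(i) and of the pendant-vertex reduction (a degree-one vertex would exhibit $G$ as a star product of a QE graph with $P_2$) matches the paper and is fine. The gap is in the core of (i)$\Rightarrow$(ii). The lemma you say you need --- ``any bipartite graph with minimum degree $\ge 2$ that is not a subdivided cycle and not a theta graph contains a proper connected non-QE subgraph'' --- is false as stated: take two copies of $C_4$ sharing a vertex, or two copies of $C_4$ joined by a path of length $2$. These are bipartite, have minimum degree $2$, are neither cycles nor theta graphs, yet every connected subgraph (proper or not) is a path, a cycle, or a star product of such, hence of QE class. The second example even has exactly two branch vertices of degree $3$ not joined by three internally disjoint paths, which is precisely a configuration your plan claims must yield a proper non-QE subgraph. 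So the branch-vertex case analysis cannot be closed on structural grounds alone: one must use the hypothesis that $G$ itself is non-QE, and your plan invokes that hypothesis only in the pendant-vertex step, nowhere in the decisive part of the argument.

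The missing idea is the one the paper builds the whole forward direction on: since $G$ is bipartite and non-QE, Theorem~\ref{pro:tanaka} guarantees a Tanaka quintuple $(v_1,\dots,v_5)$, and Lemma~\ref{le:tanaka} forces the two geodesics $v_1\to v_3$ and $v_2\to v_4$ to be disjoint and to avoid $v_5$. This hands you a concrete even cycle $X\cup Y$ plus a third path $Z$ through $v_5$ attached to it, and the structural analysis (which extra paths can exist, parity of their lengths, where $W$ from $Y$ to $Z$ must land) is then carried out relative to this witness; at each branching of the case analysis one either contradicts the quintuple conditions or exhibits a proper theta subgraph whose parameters are pinned down well enough that Theorem~\ref{thm:thetanonqe} certifies it non-QE. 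Without such a witness you cannot distinguish the genuinely problematic configurations from QE star products of cycles and paths, so as written the proposal does not yield a proof.
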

\begin{proof}
   
The implication (ii)$\Rightarrow$(i) was already shown in Theorem \ref{thm:primarynonqe}, let us consider a bipartite graph $G$ satisfying (i) and suppose that it is not a theta graph. By Theorem \ref{pro:tanaka},  $G$ has a Tanaka quintuple $(v_1,v_2,v_3,v_4,v_5)$.  

Let $X:=(v_1=x_1,\ldots,x_m=v_3)$ and $Y:=(v_2=y_1,\ldots,y_m=v_4)$ be the geodesics. 
Clearly $G$ needs to have some extra edges, i.e., edges that are neither in $X$ nor in $Y$, and clearly none of these edges cannot connect vertices, neither from $X$, nor from $Y$ as this would violate the definition of geodesics.  We analyse now several instances, obtaining each time a contradiction. 

Assume that there is an edge other than $v_1v_2$ and $v_3v_4$ that connects a vertex from $X$ with vertex from $Y$, then by Lemma \ref{le:tanaka} we still do not have a Tanaka quintuple. 
Therefore, for the existence of Tanaka quintuple, there must exist a path $Z:=(z_1,\ldots,z_n)$ with length at least 2, such that $Z\cap (X\cup Y)=\{z_1,z_n\}$ and $v_5\in Z$. Note that vertices $z_1$ and $z_n$ must be adjacent. Indeed, otherwise  $G$ contains properly $\Theta(\alpha,\beta,\gamma)$, where $\alpha,\beta,\gamma \ge 2$ and $\alpha,\beta,\gamma$ are all even or all odd and consequently,  by Theorem~\ref{thm:thetanonqe}, $G$ contains a subgraph of non-QE class, contradiction.

Since the graph $G$ is bipartite and vertices $z_1$ and $z_n$ are adjacent, the path $Z$ has a length at least 3. Without loss of generality, assume that $z_1,z_n\in X\cup \{v_2\}$. To satisfy the condition $d(v_5,v_3)=d(v_5,v_4)+1$, the graph $G$ must contain a path $W:=(w_1,\ldots,w_i)$ such that $w_1\in Y$ and $w_i \in Z$. Clearly, $W$ and $X$ are disjoint, moreover $W$ has at least one edge that belongs neither to $Y$ nor to $Z$. We have $w_i\ne z_1$ and $w_i\ne z_n$, otherwise $d(v_1,v_3)=d(v_1,v_4)+1$, which is a contradiction with existence of Tanaka quintuple. 

Let $w_i=z_j$, where $j\in \{2,\ldots,n-1\}$, define the paths $Z_1:=(z_1,\ldots,z_j)$ and $Z_2:=(z_j,\ldots,z_n)$. Then  a subdivided wheel of non-QE class is a subgraph of $G$, see Example~\ref{subdivided}. Observe that either $Z_1$ or $Z_2$ has  length at least 2, without loss of generality assume the latter. Then $G$ has  a subgraph  $\Theta(\alpha,\beta,\gamma)$ with paths $P_\alpha:=Z_1 \cup \{z_n\}$, $P_\beta:=Z_2$ and $P_\gamma:=W\cup \{w_1,\ldots,v_4,v_3,\ldots, z_n\}$. Observe that  by Theorem~\ref{thm:thetanonqe} the graph $\Theta(\alpha,\beta,\gamma)$ is of non-QE class, which  is a  contradiction finishing the proof.
\end{proof}

\bibliographystyle{plain}
\bibliography{grafyMSW}
\end{document}